\newtheorem{THEOREM}{Theorem}[section]
\newtheorem{theorem}[THEOREM]{Theorem}
\newtheorem{lemma}[THEOREM]{Lemma}
\theoremstyle{definition}
\newtheorem{definition}[THEOREM]{Definition}
\newtheorem{claim}[THEOREM]{Claim}
\newtheorem{corollary}[THEOREM]{Corollary}
\newtheorem{example}[THEOREM]{Example}
\newtheorem{remark}[THEOREM]{Remark}
\newtheorem{Observation}[THEOREM]{Observation}
\newenvironment{observation}{\begin{Observation}}
{\end{Observation}}
\newtheorem{Notation}[THEOREM]{Notation}
\newenvironment{notation}{\begin{Notation}}
{\end{Notation}}
\newcommand{\forces}{\Vdash}
\newcommand{\dom}{\text{dom}}
\newcommand{\supt}{\text{supt}}
\newcommand{\ran}{\text{ran}}
\def\mathunderaccent#1#2 {\let\theaccent#1\skewfactor#2
\mathpalette\putaccentunder}
\def\putaccentunder#1#2{\oalign{$#1#2$\crcr\hidewidth
\vbox to.2ex{\hbox{$#1\skew\skewfactor\theaccent{}$}\vss}\hidewidth}}
\def\name{\mathunderaccent\tilde-3 }
\newcommand{\rest}{\upharpoonright}
\newcommand{\deq}{\buildrel{\rm def}\over =}
\newcommand{\cf}{\text{cf}}
\newcommand{\otp}{\text{otp}}
\newcommand{\eop}{\bigstar}
\newcommand{\DD}{\mathcal D}
\newcommand{\EE}{\mathcal E}
\newcommand{\FF}{\mathcal F}
\newcommand{\HH}{\mathcal H}
\newcommand{\V}{\mathbf V}
\title{A Baumgartner-style Property that Applies to Preservation of $\aleph_1$ and $\aleph_2$ under Iterations with Supports of Size $\aleph_1$}
\author{Mirna D{\v z}amonja, by-name Logique Consult, Paris}
\date{}
\begin{document}

\maketitle


\begin{abstract} We prove a theorem on iterated forcing that can be used for preservation of $\aleph_2$ and $\aleph_1$ in iterations with supports of size 
$\aleph_1$ of forcings that have amalgamation properties similar to those present in the perfect set forcing.
The work is modelled after Baumgartner's Axiom A and his proof that iterations with countable support of the same preserve $\aleph_1$.  In honour of James E. Baumgartner, the property introduced here is called Property B$(\kappa)$. The known additional difficulties when forcing at cardinals higher than $\aleph_1$ make for
a less general theorem and a more complex theorem on the iteration, which is not an iteration theorem in the classical sense. 

The results extend to other cardinals $\kappa$ such that $\kappa^{<\kappa}=\kappa$, in place of $\aleph_1$. We give examples of individual forcings that have Property B$(\kappa)$ and their products. In particular, we introduce a correct version of the generalised Prikry forcing, which we call Perfect Set Forcing with Respect to a Filter and give its basic properties.
\end{abstract}

\section{Introduction and Motivation} We shall briefly motivate the paper by a rappel of directions of research that it inscribes itself into, while more detailed historical remarks are given in \S\ref{sec:history}. 

Starting with Baumgartner's lectures on Axiom A in Cambridge 1978 and subsequent development of proper forcing by Saharon Shelah from 1980 on, methods for iterating forcing with countable support while preserving $\aleph_1$ have become a main focus of research in set theory and have entered into the main stream of its applications. It is natural to try to extend these methods to cardinals larger than $\aleph_1$, notably $\aleph_2$. 
Several successes in this direction have been obtained by trying to generalise proper forcing and the corresponding forcing axiom PFA. The authors involved include Andrzej Ros{\l }anowski and Shelah with a series of papers started with \cite{zbMATH01751288}, Itay Neeman with a very influential idea using two kinds of elementary submodels as side conditions \cite{Neeman} and several other researchers developing Neeman's method, such as Giorgio Venturi and Boban Veli{\v c}kovi{\'c} \cite{VeVe}. 

However, it seems that the full problem of generalising proper forcing to $\aleph_2$ is difficult and at this time no analogue to PFA on $\aleph_2$ has been found. Moreover, there are combinatorial results by Shelah \cite{Shelah-flat}
and Tanmay Inamdar \cite{zbMATH07740598} that show that a generalisation of PFA to $\aleph_4$ cannot possibly hold, as provable just in ZFC. Inamdar and
Assaf Rinot \cite{Inamdar-Rinot-nonstructure-aleph2} have discovered many combinatorial properties of $\aleph_2$ that indicate that rather than a structure theory that $\aleph_1$ exhibits
under PFA, the cardinal $\aleph_2$ exhibits a non-structure theory just in ZFC. This makes it quite difficult to imagine that there should be a analogue of PFA
at $\aleph_2$.

Then it interesting to ask, if we accept that having PFA at $\aleph_2$ is rather unlikely, can we have weaker iteration theorems that do apply to
$\aleph_2$. Versions of generalised Martin's axiom seem to have been known at least since 1977, see \S\ref{sec:history} for details, and to go back to the work of Richard Laver.  To our knowledge, there has not been an effort to generalise Axiom A forcing. That is the purpose of our paper.

The difficulty of finding forcing arguments on $\aleph_2$ is to preserve both $\aleph_1$ and $\aleph_2$ in iterations with supports of size $\aleph_1$ and the fact that $\aleph_1$ tends to collapse in stages of countable cofinality.
We run into such difficulties even if we require the individual forcings to be countably closed and use countable support. 
In the two published versions of Generalised Martin Axiom to $\aleph_1$ obtained by countable support iteration
(by Baumgartner \cite[\S4]{Ba} and by Shelah in \cite{Sh80}), in addition to requiring individual forcing notions to satisfy countable closure, they also need to be well-met, and satisfy stronger version of $\aleph_2$-cc ($\aleph_1$-linked for Baumgartner and stationary $\aleph_2$-cc for Shelah).

In their generalisation of proper forcing, Ros{\l }anowski and Shelah \cite{zbMATH01751288} used countably closed forcing and a property they called properness over semi-diamonds, that is a specific use of $\diamondsuit$ in the ground model, to power the iteration. This generalisation does not seem to have found wide applications, probably since it is rather abstract.
Almost any other forcing argument that we have about $\aleph_2$ goes through the procedure of collapsing a large cardinal to $\aleph_2$. This appeared first in Jack Silver's proof (see  \cite{Mitchellmodel}) that it is consistent modulo a weakly compact cardinal that there is a model with no $\aleph_2$-Aronszajn trees, and has been successfully used and developed in many contexts, including Laver-Shelah \cite{LaverShelah} consistency proof of the 
$\aleph_2$-Suslin hypothesis.

We are after an iteration method which is both simple to state and does not involve large cardinals.
We introduce property $B(\kappa)$ forcing, give concrete example and study behaviour of $B(\kappa)$-forcing under products and iterations. The main result of the paper is a conditional iteration theorem for this type of forcing and the fact that it applies to iterations 

The paper is organised as follows. In \S\ref{recallA} we recall Axiom A and Baumgartner's iteration theorem for such forcing.
In \S\ref{sec:prepertyBdef} we define Property B$(\kappa)$ forcing and show that it preserves $\kappa^+$. The definition is such that Axiom A forcing is
exactly property $B(\aleph_0)$.

Section \ref{sec:examples} is devoted to two representative examples of Property B$(\kappa)$ forcing, which are generalisations to uncountable cardinals of known Axiom A forcings. These are Perfect Set forcing with respect to a filter and the forcing to add a Baumgartner-Prikry-Silver subset of $\kappa$. 

We continue to products of such forcings: in \S\ref{sec:products} for the ordinary product and \S\ref{sec:mixeds} for the product with mixed support. We finally develop a conditional iteration theorem for property $B(\kappa)$ forcing in \S\ref{subsec:iteration-general}, with a concrete example of its applicability in \S\ref{sec:whichones}.

The paper ends with conclusions and further directions presented in \S\ref{sec:conclusions} and historical remarks in \S\ref{sec:history}.

\subsection{Notation and Traditions}
Set theorists commit a common fallacy by saying that objects are defined by (transfinite) induction, when in fact it is the recursion that defines and the induction that proves. We follow the tradition by stating that we define objects by induction.

Our forcing notation follows the so-called Jerusalem tradition, where $p\le q$ means that $q$ is a stronger condition than $p$. In addition to the tradition, this actually corresponds to our philosophical view of forcing as constructing a new object by a partial order of its approximations. It is sometimes better to take the Boolean algebra view of narrowing down to a new object by an algebra of possible truth values, but it is not the case of this paper.

\section{Definition of Axiom A and our goals.}\label{recallA}

\begin{definition}\label{def:axiomA}[Axiom A]
A forcing notion $\mathbb P$ satisfies {\em Axiom A} if:
\begin{enumerate}
    \item There exist partial orders \(\leq_n\), with \(\leq_0 = \leq\), and $\le_{n+1}\subseteq \le_n$ for all $n$.
    \item If $\langle p_n:\,n<\omega\rangle$ is such that $p_n\le_n p_{n+1}$ for each $n$, there is $q$ such that for every $n$
    we have $p_n\le_n q$.
    \item\label{names} For any dense set $\mathcal D$ in $\mathbb P$ and $p\in \mathbb P$, there is $q\ge p$ and a countable $\mathcal D'\subseteq \mathcal D$ 
    such that $\mathcal D'$ is predense above $q$.
     \end{enumerate}
\end{definition}

\begin{remark}\label{dense-antichain} By usual translations between these concepts, see \cite[ChVII]{Kunen}, item (\ref{names}) in Definition \ref{def:axiomA} can be phrased equivalently in terms of antichains in place of dense sets or
in terms of names for the elements of the ground model. 
\end{remark}

We recall Theorem 7.1 from \cite{Ba}.

\begin{theorem}\label{th:preservingAxiomA} Suppose that $\langle {\mathbb P}_\alpha, \name{Q}_\beta :\,\alpha\le\alpha^\ast, \beta<\alpha^\ast\rangle$
is a countable support iteration of Axiom A forcings. Then:
\begin{enumerate}
\item $\aleph_1$ is preserved and
\item if the length $\alpha^\ast$ of the iteration is $\le\aleph_2$, CH holds in $V$ and for each $\alpha$ we have
 \[
 \forces_{\mathbb P_\alpha} ``|\name{Q}_\alpha|\le \aleph_1",
 \]
 then ${\mathbb P}_{\alpha^\ast}$ satisfies $\aleph_2$-cc.
\end{enumerate}
\end{theorem}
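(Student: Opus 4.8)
The plan is to follow Baumgartner's original argument; its engine is a fusion lemma for countable support iterations of Axiom A forcings. For $p\in\mathbb{P}_{\alpha^\ast}$, a finite $F\subseteq\alpha^\ast$ and $n<\omega$, say $p\le_{F,n}q$ iff $p\le q$ and $q\rest\alpha\forces_{\mathbb{P}_\alpha}p(\alpha)\le_n^{\name{Q}_\alpha}q(\alpha)$ for every $\alpha\in F$, where $\le_n^{\name{Q}_\alpha}$ names the $n$-th Axiom A order of $\name{Q}_\alpha$. The Fusion Lemma is then: if $\langle p_k,F_k,n_k:k<\omega\rangle$ satisfies $p_k\le_{F_k,n_k}p_{k+1}$, $F_k\subseteq F_{k+1}$, $n_k<n_{k+1}$ and $\bigcup_kF_k\supseteq\bigcup_k\supp(p_k)$, there is $q$ with $\supp(q)=\bigcup_k\supp(p_k)$ and $p_k\le_{F_k,n_k}q$ for all $k$; one builds $q$ by recursion on the coordinate $\alpha$, at stage $\alpha$ feeding the $\le_n$-increasing sequence of the values $p_k(\alpha)$, as forced by the already constructed $q\rest\alpha$, into clause (2) of Axiom A for $\name{Q}_\alpha$.

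The core is a Capturing Lemma, proved by induction on $\alpha^\ast$: for every $\mathbb{P}_{\alpha^\ast}$-name $\name{f}$ for a function $\omega\to\mathrm{Ord}$, every $p$, every finite $F$ and every $n$, there are $q\ge_{F,n}p$ and, in $\V$, countable sets $\langle A_k:k<\omega\rangle$ with $q\forces\name{f}(k)\in A_k$ for all $k$. This gives preservation of $\aleph_1$ immediately: if $p\forces$ ``$\name{f}:\omega\to\omega_1^{\V}$ is onto'', the resulting $q$ also forces $\ran(\name{f})\subseteq\bigcup_kA_k$, a countable-in-$\V$ subset of $\omega_1^{\V}$ --- absurd. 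For the Capturing Lemma, the successor case $\alpha^\ast=\beta+1$ uses clause (3) of Axiom A for $\name{Q}_\beta$, in the form (standard for Axiom A, reformulable via names as in Remark \ref{dense-antichain}) that for any $n$ one may pass to a $\le_n$-stronger condition forcing a given name for an ordinal into a countable set, to reduce $\name{f}$ modulo $p(\beta)$ to a $\mathbb{P}_\beta$-name for a function $\omega\to\mathrm{Ord}$ with countable fibres, and then invokes the induction hypothesis on $\mathbb{P}_\beta$ with $F\cap\beta$ and $n$. For limit $\alpha^\ast$, fix a countable $N\prec H_\theta$ with $\mathbb{P}_{\alpha^\ast},p,\name{f},F,n\in N$ and build an $(N,\mathbb{P}_{\alpha^\ast})$-generic $q\ge_{F,n}p$ (possible since Axiom A forcings are proper, this being precisely the limit step of the proof that countable support iterations of proper forcings are proper, with the extra $\le_{F,n}$-control added routinely); then $A_k:=\{\xi:$ some condition of $N\cap\mathbb{P}_{\alpha^\ast}$ forces $\name{f}(k)=\check{\xi}\}$ is countable and, by $N$-genericity, $q\forces\name{f}(k)\in A_k$.

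For part (2), assume $\alpha^\ast\le\aleph_2$, $\CH$, and $\forces_{\mathbb{P}_\alpha}|\name{Q}_\alpha|\le\aleph_1$ for all $\alpha$. First prove by induction on $\gamma\le\omega_1$ that $\mathbb{P}_\gamma$ has a dense subset of size $\le\aleph_1$: at a successor $\gamma=\zeta+1$ one counts the $\mathbb{P}_\zeta$-names for members of $\name{Q}_\zeta$, using $\CH$, the inductive bound, and the fact --- obtained by iterating clause (3) via fusion --- that it suffices to count names read continuously off the generic, of which there are only $\aleph_1$ under $\CH$; at limit $\gamma$ one uses $\CH$ and $\aleph_1^{\aleph_0}=\aleph_1$ to bound the countably supported conditions. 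The same induction along any countable $R\subseteq\alpha^\ast$ shows $\mathbb{P}_{\alpha^\ast}^{(R)}:=\{q\in\mathbb{P}_{\alpha^\ast}:\supp(q)\subseteq R\}$ has a dense subset of size $\le\aleph_1$. Now given $\langle p_i:i<\omega_2\rangle$, apply the $\Delta$-system lemma (available under $\CH$ to $\omega_2$ many countable sets) to assume the supports form a $\Delta$-system with countable root $R$; extending each $p_i$ if necessary --- which does not affect $\supp(p_i)\setminus R$ --- assume also that $p_i\rest R$ lies in a fixed dense subset of $\mathbb{P}_{\alpha^\ast}^{(R)}$ of size $\le\aleph_1$. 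Pigeonhole gives $i\ne j$ with $p_i\rest R=p_j\rest R$, and then the function $q$ with support $\supp(p_i)\cup\supp(p_j)$ agreeing with $p_i\rest R$ on $R$, with $p_i$ on $\supp(p_i)\setminus R$ and with $p_j$ on $\supp(p_j)\setminus R$ is a common extension --- the trivial, disjoint-support instance of the amalgamation used throughout the paper, verified by a direct induction on coordinates. Hence $\mathbb{P}_{\alpha^\ast}$ is $\aleph_2$-cc.

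The main obstacle in part (1) is the limit case of the Capturing Lemma at cofinality $\omega$ --- precisely the inverse-limit step that makes countable-support iterations (rather than, say, full-support ones) the right notion for preserving $\aleph_1$. In part (2) the genuine work is the name-counting giving the $\aleph_1$ bound on a dense subset of $\mathbb{P}_\gamma$ for $\gamma\le\omega_1$: one must check that the Axiom A structure lets one restrict to continuously read names, and this is where $\CH$ is really used.
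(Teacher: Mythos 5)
Your part (2) contains a step that fails as stated. The load-bearing claim is that $\mathbb{P}^{(R)}_{\alpha^\ast}=\{q:\supp(q)\subseteq R\}$ has a dense subset of size $\le\aleph_1$ ``by the same induction along $R$''. But $\mathbb{P}^{(R)}_{\alpha^\ast}$ is not an iteration of length $\otp(R)$: at a coordinate $\gamma\in R$ its entries are names over the \emph{full} $\mathbb{P}_\gamma$, and the successor-step counting trick (extend so as to decide the index $\xi<\omega_1$ of the value under a fixed name for a surjection of $\omega_1$ onto $\name{Q}_\gamma$) is unavailable, since inside $\mathbb{P}^{(R)}_{\alpha^\ast}$ you are not allowed to strengthen coordinates off $R$. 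In fact the claim is false in general: already for $R=\{1\}$ with $\mathbb{P}_1$ and $\name{Q}_1$ both Sacks (under CH), $\mathbb{P}^{(R)}_{\alpha^\ast}$ is the forcing of $\mathbb{P}_1$-names for Sacks conditions under outright forced refinement; given any family $\{\name{d}_i:i<\omega_1\}$, choose an antichain $\{p_i:i<\omega_1\}$ in $\mathbb{P}_1$ and nodes $t_i\neq\langle\,\rangle$ with $p_i\forces t_i\in\name{d}_i$, and let $\name{q}$ name the full binary tree with the cone above $t_i$ removed when $p_i$ is in the generic; then no $\name{d}_i$ is forced below the empty condition to refine $\name{q}$, so no $\aleph_1$-sized dense set exists. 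Consequently the pigeonhole onto literal agreement on the root cannot be run this way (your restriction of the induction to $\gamma\le\omega_1$ is a symptom of the same misconception that only the order type of the support matters). The standard repair is to use an \emph{initial segment}: since $R$ is countable and $\alpha^\ast\le\omega_2$ with $\omega_2$ regular, $R\subseteq\beta$ for some $\beta<\omega_2$; prove by induction on \emph{all} $\beta<\omega_2$ that $\mathbb{P}_\beta$ is $\aleph_2$-cc (say via a dense subset of size $\aleph_1$, the countable-cofinality limits being exactly where the continuous-reading/fusion counting, or a tail decomposition, must be carried out); then among $\aleph_2$ many conditions two have compatible restrictions to $\beta$, and the disjointness of the supports beyond $\beta$ gives the amalgam. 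Restricting to an initial segment is what legitimizes the counting, because there extending below $\beta$ is permitted.

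Part (1) is right in outline, but note that at the limit case of your Capturing Lemma you do not use your induction hypothesis at all: the existence of an $(N,\mathbb{P}_{\alpha^\ast})$-generic condition is precisely Shelah's preservation theorem for countable support iterations of proper forcing, quoted rather than proved, and the ``$\le_{F,n}$-control added routinely'' is exactly where the iteration Fusion Lemma you state has to be deployed (enumerate the dense sets of $N$, interleave $\le_{F_k,n_k}$-extensions meeting them with $\bigcup_k F_k$ exhausting the supports, and fuse); as written, your Fusion Lemma does no work. Baumgartner's own proof --- which the paper only cites and does not reproduce --- and the paper's generalisation (Theorem \ref{th:iteration} via Lemmas \ref{lem:fusion-lem} and \ref{cor:preser1}) combine the submodel (or direct bookkeeping) argument with fusion at limits of countable cofinality rather than replacing it by a citation; either make that interleaving explicit or state clearly that you are invoking the properness preservation theorem and prove the $\le_{F,n}$ rider.
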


We are interested to generalise this theorem to cardinals larger than $\aleph_1$, specifically to $\aleph_2$.

\section{Property B$(\kappa)$}\label{sec:prepertyBdef}
In analogy with Baumgartner's Axiom A, we define Property B forcing, with respect to a cardinal $\kappa$.

\begin{definition}\label{def:propertyB}[{\em Property B$(\kappa)$}]
A forcing notion ${\mathbb P}=(P,\le)$ satisfies {\em Property B$(\kappa)$} if:
\begin{enumerate}
    \item There exist a sequence $\langle \leq_\zeta:\,\zeta<\kappa\rangle$ of partial orders on $P$, satisfying  \(\leq_0 = \leq\), and $\xi<\zeta\implies \le_\xi\supseteq \le_\zeta$, and such that:
    \item  For $\delta$ limit $\in (0,\kappa)$, we have $\le_\delta=\bigcap_{\zeta<\delta} \le_\zeta$.
  \item {\em (Fusion)} If $\delta\le\kappa$ is a limit ordinal we say that $\bar{p}=\langle p_\zeta:\,\zeta<\delta\rangle$ is 
  a $\delta$-fusion sequence if $p_\zeta\le_\zeta p_{\zeta+1}$ for each $\zeta$ and for $\varepsilon$ limit $<\delta$ we have that $p_\zeta\le p_\varepsilon$
  for every $\zeta<\varepsilon$.
  
  The {\em Fusion} property states that for any $\delta$ limit $\le \kappa$ and any $\delta$-fusion sequence $\bar{p}$
  there is $q$ such that for every $\zeta<\delta$
 we have $p_\zeta\le_\zeta q$.
   
  \item\label{mastering-def} {\em (Mastering $\kappa$)} For any dense set $\mathcal D$ in $\mathbb P$ and $p\in \mathbb P$, for every $\zeta<\kappa$, there is $q\ge_\zeta p$ and $\mathcal D'\subseteq \mathcal D$ of
    size $\le\kappa$
    such that $\mathcal D'$ is predense above $q$.
     \end{enumerate}
\end{definition}

\begin{Observation}\label{obs:masteq} (1) The definition implies that $B(\aleph_0)$ is exactly Axiom A forcing.

{\noindent (2)} The condition {\em Mastering $\kappa$} in Definition \ref{def:propertyB} can be rephrased as follows: if $p\forces``\name{\tau} \mbox{ is an ordinal}"$,
then there is $q\ge_\zeta p$ and a set of ordinals $X\in {\mathbf V}$ of size $\le\kappa$, such that $q\forces ``\name{\tau}\in X"$.

[To see that this property is implied by {\em Mastering $\kappa$} as stated in the definition, notice that 
\[
\DD=\{r:\,r \mbox{ forces a value to }\tau\}
\]
is dense. Let $q\ge_\zeta p$ be and $\DD'\subseteq \DD$ of size $\le\kappa$ be such that $\DD'$ is predense above $q$. Let $X$ be the set
of ordinals $\alpha$ such that there is $r\in \DD'$ that forces $\name{\tau}$ to be $\alpha$. Hence $X\in {\mathbf V}$ and $q$ forces $\name{\tau}$ to be in
$X$.

In the other direction, let $\DD=\{q_i:\,i<i^\ast\}$ and let $\name{\tau}$ be the $\mathbb P$-name for the first $i$ such that $q_i$ is in $G$, which is well
defined since $\DD$ is dense. Let $p$ be any condition and $\zeta<\kappa$. Let $X$ and $q$ be as given by the property under observation. Define
$\DD'=\{q_i:\;i\in X\}$, so $\DD'\subseteq \DD$ is of size $\le\kappa$ and it is predense above $q$.]

{\noindent (3)} If a forcing notion satisfies (1) and (2) of Definition \ref{def:propertyB} and is $(<\kappa)$-closed, it automatically satisfies the
limit ordinal case $\delta<\kappa$ of item (3) of the definition. Hence, for $(<\kappa)$-closed forcings, the heart of Property $B(\kappa)$ is in the fusion of length $\kappa$.

{\noindent (4)} Item (3) of Definition \ref{def:propertyB} can be stated in an equivalent form using sequences of conditions of length the cofinality of $\delta$,
Namely let $\langle \delta_i:\,i<\cf(\delta)\rangle$. Then a sequence $\langle p_\zeta:\,\zeta<\delta\rangle$ like the one in the requirements of item (3) will
have an upper bound in $\le_\delta$ iff the sequence $\langle p_{\delta_i}:\,i<\cf(\delta)\rangle$ has an upper bound, given the assumptions on the orders $\le_\zeta$.

\end{Observation}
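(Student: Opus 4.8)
The plan is to verify all four items directly from Definitions \ref{def:axiomA} and \ref{def:propertyB}; only (3) and (4) carry any content, while (2) is already settled by the bracketed argument in the excerpt (density of $\{r:\,r\ \text{decides}\ \name{\tau}\}$ in one direction, and the canonical name for the least $i$ with $q_i\in G$ in the other). For (1) I would specialise Definition \ref{def:propertyB} to $\kappa=\aleph_0$ and match it clause by clause against Definition \ref{def:axiomA}. The order sequence $\langle\le_\zeta:\zeta<\aleph_0\rangle$ is simply an $\omega$-chain $\langle\le_n:n<\omega\rangle$, on which ``$\xi<\zeta\implies\le_\xi\supseteq\le_\zeta$'' is exactly ``$\le_{n+1}\subseteq\le_n$''; item (2) of Definition \ref{def:propertyB} is vacuous, since $(0,\omega)$ contains no limit ordinal; in the Fusion item the only limit $\delta\le\aleph_0$ is $\omega$ and the clause about limit $\varepsilon<\delta$ is again vacuous, so an $\omega$-fusion sequence together with its fusion bound is verbatim the hypothesis and conclusion of Axiom A(2); and Mastering $\aleph_0$ --- for every $n<\omega$ there are $q\ge_n p$ and a countable $\mathcal D'\subseteq\mathcal D$ predense above $q$ --- is Axiom A(3) in its usual form, its $n=0$ case being clause (3) of Definition \ref{def:axiomA} as printed. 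Running the matching backwards gives the converse inclusion, and I expect this to be entirely routine.

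For (3), let $\delta<\kappa$ be a limit ordinal and $\bar p=\langle p_\zeta:\zeta<\delta\rangle$ a $\delta$-fusion sequence. I would first record that $\bar p$ is $\le$-increasing --- at a successor $p_\zeta\le_\zeta p_{\zeta+1}$ and $\le_\zeta\subseteq\le_0=\le$, while at a limit $p_\zeta\le p_\varepsilon$ is given outright --- and, using the nesting $\le_\zeta\subseteq\le_\eta$ for $\eta\le\zeta$ together with transitivity, that past each $\zeta$ the sequence is $\le_\zeta$-increasing within the block bounded by the next limit ordinal. Since $\bar p$ has length $\delta<\kappa$, $(<\kappa)$-closure supplies an upper bound, and the point of the Observation is that this already witnesses the $\delta<\kappa$ instance of Fusion: no genuine fusion step is needed below $\kappa$, leaving only $\delta=\kappa$. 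The delicate point, and what I expect to be the main obstacle, is the \emph{promotion} of that upper bound to one lying $\le_\zeta$-above each $p_\zeta$ rather than merely $\le$-above it; this has to be extracted from the nesting of the orders and the blockwise coherence just noted.

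For (4), fix an increasing cofinal sequence $\langle\delta_i:i<\cf(\delta)\rangle$ in $\delta$. The algebraic fact driving the equivalence is $\le_\delta=\bigcap_{\zeta<\delta}\le_\zeta=\bigcap_{i<\cf(\delta)}\le_{\delta_i}$, immediate from cofinality and from the nesting of the orders. The forward direction is trivial: a $q$ that is a fusion bound of $\bar p$ is in particular one of $\langle p_{\delta_i}:i<\cf(\delta)\rangle$. For the reverse, given $q$ with $p_{\delta_i}\le_{\delta_i}q$ for all $i$ and an arbitrary $\zeta<\delta$, I would choose $\delta_i\ge\zeta$ and combine $p_\zeta\le_\zeta p_{\delta_i}$ --- the coherence of the fusion sequence, which is precisely where ``the assumptions on the orders $\le_\zeta$'' mentioned in the statement enter --- with $\le_{\delta_i}\subseteq\le_\zeta$ and transitivity of $\le_\zeta$ to get $p_\zeta\le_\zeta q$. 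Apart from these promotion steps, the whole Observation is bookkeeping with the definitions.
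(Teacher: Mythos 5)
Your handling of item (2) is fine --- the paper's only written argument for this Observation is exactly the bracketed proof you defer to --- and your reduction of (1) to a clause-by-clause matching is the intended reading, though your claim that ``running the matching backwards is entirely routine'' glosses over the one real discrepancy you yourself noticed: the printed clause (3) of Definition \ref{def:axiomA} only produces $q\ge p$, while Mastering $\aleph_0$ demands $q\ge_n p$ for every $n$, and passing from the former to the latter is not a formality (it is the standard Baumgartner formulation, not the printed one, that matches $B(\aleph_0)$ verbatim).

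The genuine gap is in your item (3), and it is not merely a delicate step you postponed: the ``promotion'' you flag cannot be extracted from the stated hypotheses. A $\delta$-fusion sequence is only required to be $\le$-coherent at limit stages ($p_\zeta\le p_\varepsilon$, not $p_\zeta\le_\zeta p_\varepsilon$), so the blockwise $\le_\zeta$-coherence you invoke dies at the first limit ordinal above $\zeta$; and even for $\delta=\omega$, where no limits intervene, abstract $(<\kappa)$-closure only hands you \emph{some} $\le$-upper bound, with no reason for it to lie $\le_\zeta$-above each $p_\zeta$. Indeed one can build a countably closed partial order with a decreasing continuous system $\langle\le_\zeta:\zeta<\omega_1\rangle$ (e.g.\ a chain $a_0<a_1<\dots<c$ with $\le_\zeta$ for $\zeta\ge 1$ trivial above the $a_n$'s) in which an $\omega$-fusion sequence has $c$ as its unique $\le$-upper bound yet $p_1\le_1 c$ fails; so the promotion step would genuinely fail at the level of generality of Definition \ref{def:propertyB} plus $(<\kappa)$-closure. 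What makes the claim work in the paper's concrete forcings is extra structure: the closure there is witnessed by \emph{least} upper bounds of an explicit form (intersections of trees, unions of partial functions) against which the $\le_\zeta$-relations are checked by hand (Lemmas \ref{when-closed}/\ref{Sacks:Fusion} and \ref{R:closure}/\ref{lemma-Fusion-R}); any honest proof of (3) must import something of that kind, or strengthen the limit clause in the definition of a fusion sequence. The same unproved coherence assumption reappears in your reverse direction of (4): the step ``$p_\zeta\le_\zeta p_{\delta_i}$ --- the coherence of the fusion sequence'' is not available from the definition once a limit ordinal lies between $\zeta$ and $\delta_i$, so as written that promotion is also unjustified. (The paper itself states (1), (3), (4) without proof, so you are not deviating from its route; but the steps you leave to ``bookkeeping'' are precisely the ones that need, and in the abstract setting do not have, an argument.)
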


\begin{lemma}\label{preservation-kappa-plus} Property B$(\kappa)$ forcing preserves $\kappa^+$.
\end{lemma}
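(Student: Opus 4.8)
The plan is to mimic Baumgartner's proof that Axiom~A forcing preserves $\aleph_1$, replacing the $\omega$-length fusion by fusion of length $\kappa$ and the ``countably many possibilities'' conclusion by ``at most $\kappa$ many possibilities''. Fix $\mathbb P$ with Property~B$(\kappa)$. Suppose towards a contradiction that $\kappa^+$ is not preserved. No ordinal strictly between $\kappa$ and $\kappa^+$ can be a cardinal of the extension, since each such ordinal is already the range of a surjection from $\kappa$ lying in $\mathbf V$; hence there are a condition $p^\ast$ and a $\mathbb P$-name $\name{f}$ with $p^\ast\forces``\name{f}\colon\kappa\to\kappa^+\text{ is a surjection}"$.

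The core of the argument is to build, by recursion on $\zeta<\kappa$, a $\kappa$-fusion sequence $\langle p_\zeta:\,\zeta<\kappa\rangle$ with $p^\ast\le p_\zeta$ for all $\zeta$, together with sets $X_\zeta\in\mathbf V$ with $X_\zeta\subseteq\kappa^+$ and $|X_\zeta|\le\kappa$, such that $p_{\zeta+1}\forces``\name{f}(\zeta)\in X_\zeta"$. I would set $p_0=p^\ast$. At a successor step $\zeta+1$: since $p_\zeta\ge p^\ast$ forces $\name{f}(\zeta)$ to be an ordinal, the reformulation of \emph{Mastering $\kappa$} given in Observation~\ref{obs:masteq}(2), applied with $\name{\tau}=\name{f}(\zeta)$ and with the ordinal $\zeta$, provides $p_{\zeta+1}\ge_\zeta p_\zeta$ and a set of ordinals $X_\zeta\in\mathbf V$ of size $\le\kappa$ with $p_{\zeta+1}\forces``\name{f}(\zeta)\in X_\zeta"$; replace $X_\zeta$ by $X_\zeta\cap\kappa^+$. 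At a limit step $\varepsilon<\kappa$: the already-built $\langle p_\zeta:\,\zeta<\varepsilon\rangle$ is an $\varepsilon$-fusion sequence by the induction hypothesis, so the \emph{Fusion} property (item~(3) of Definition~\ref{def:propertyB}, applied with $\delta=\varepsilon$) yields $q$ with $p_\zeta\le_\zeta q$ for all $\zeta<\varepsilon$; I take $p_\varepsilon=q$. Because $\le_\xi\subseteq\le_0=\le$ and $\le$ is transitive, we get $p^\ast\le p_\zeta$ for every $\zeta$ and the sequence remains a fusion sequence at each stage, so the recursion goes through.

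To finish, apply the \emph{Fusion} property once more, this time to the full $\kappa$-fusion sequence ($\delta=\kappa$ is permitted), obtaining $q$ with $p_\zeta\le_\zeta q$ for every $\zeta<\kappa$. Then $q\ge p^\ast$, and for each $\zeta$ we have $p_{\zeta+1}\le_{\zeta+1}q$, hence $p_{\zeta+1}\le q$ and therefore $q\forces``\name{f}(\zeta)\in X_\zeta"$. Consequently $q\forces``\ran(\name{f})\subseteq X"$, where $X=\bigcup_{\zeta<\kappa}X_\zeta\in\mathbf V$ has size $\le\kappa\cdot\kappa=\kappa$. Since $q\ge p^\ast$ also forces $\ran(\name{f})=\kappa^+$, it forces $\kappa^+\subseteq X$; as $\kappa^+$ and $X$ both lie in $\mathbf V$, this gives $\kappa^+\subseteq X$ in $\mathbf V$, contradicting $|X|\le\kappa<|\kappa^+|$.

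I do not expect a genuine obstacle: this is a routine transcription of the classical proof, and all the substantive content has already been isolated in the \emph{Fusion} and \emph{Mastering $\kappa$} clauses, just as the preservation of $\aleph_1$ by Axiom~A forcing lives in clauses~(2) and~(3) of Definition~\ref{def:axiomA}. The only points that require attention are bookkeeping: that the truncations at limit stages $\varepsilon<\kappa$ really are $\varepsilon$-fusion sequences, so that item~(3) of Definition~\ref{def:propertyB} is applicable to produce $p_\varepsilon$, and that the $\le$-increasing character of $\langle p_\zeta:\,\zeta<\kappa\rangle$, obtained by transitivity through the orders $\le_\zeta$, lets the terminal bound $q$ inherit every statement $p_{\zeta+1}\forces``\name{f}(\zeta)\in X_\zeta"$.
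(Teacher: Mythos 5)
Your proof is correct and takes essentially the same route as the paper's: a $\kappa$-length fusion sequence built by applying \emph{Mastering $\kappa$} at successor stages and \emph{Fusion} at limit stages and at the end, trapping the range of $\name{f}$ inside a ground-model set of size $\le\kappa$. The only cosmetic difference is that you contradict surjectivity of $\name{f}$ onto $\kappa^+$, while the paper contradicts cofinality of $\ran(\name{f})$ in $\kappa^+$.
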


\begin{proof} It suffices to show that in $V[G]$ no function from $\kappa$ to $\kappa^+$ can be cofinal. Let $p$ force $\name{f}$
to be such a function and suppose for a contradiction that $p$ forces that the range of
$\name{f}$ is cofinal in $\kappa^+$. 

We are going to build a condition $q\ge p$ as the limit of a fusion sequence $\bar{p}=\langle p_\zeta:\,\zeta<\kappa\rangle$ and a function $g\in \mathbf V$
as follows. This sequence and the value of $g(\zeta)$ will be found by induction on $\zeta$ so that:
\begin{itemize}
\item $p_0=p$,
\item $p_{\zeta+1}\ge_\zeta p_\zeta$,
\item $g(\zeta)$ is a set in $\mathbf V$ of size $\le\kappa$ and $p_{\zeta+1}\forces``\name{f}(\zeta)\in g(\zeta)$".
\end{itemize}
Arriving to stage $\zeta+1$, we apply {\em Mastering $\kappa$} with $\name{x}$ being $\name{f}(\zeta)$ and $p$ being $p_\zeta$. We let 
$g(\zeta)=X$ for the set $X$ provided by the Lemma and let $p_{\zeta+1}$ be the $r$ provided by the lemma. We can continue at non-zero limit
stages $\delta$ by {\em Fusion} applied to $\delta$. {\em Fusion} also lets us find $q$ at the end of the induction,
as the limit of the sequence $\bar{p}$.

For any
$\zeta<\kappa$ we have that
$|g(\zeta)\cap \kappa^+|\le \kappa$, hence in $V$ there is $\beta\in \kappa^+> \sup( \ran(g)\cap\kappa^+)$.
So $q\forces ``\beta>\sup(\ran(\name{f}))"$, in contradiction to $p$ forcing $\name{f}$ to be cofinal in $\kappa^+$.
$\eop_{\ref{preservation-kappa-plus}}$
\end{proof}

\section{Some Examples of Property B$(\kappa)$ Forcing}\label{sec:examples}
We show two representative examples of Property B$(\kappa)$ forcing, which are generalisations to uncountable cardinals of known Axiom A forcings.
We have chosen Perfect Set forcing with respect to a filter and a Baumgartner-Prikry-Silver subset of $\kappa$. 

\subsection{Perfect Set Forcing with Respect to a Filter}\label{gSacks} 
A version of the following forcing is defined in a paper by Elisabeth Theta Brown and Marcia Groszek \cite[\S1]{zbMATH05121369}. We add the condition that the conditions in the forcing notion are of height $\kappa$. See \S\ref{sec:history} for historical details.

Let $\kappa$ be an infinite regular cardinal and let $\FF$ be a $(<\kappa)$-complete filter on $\kappa$. We say that a subtree $p\subseteq {}^{{<\kappa}}\kappa$ is  {\em $\FF$-perfect}, if 
\begin{itemize}
\item $\sup\{\dom(s):\,s\in p\}=\kappa$, and
\item for any $s\in p$, there is
$t\supseteq s$ such that $\{\alpha<\kappa:\, t\frown\alpha\in p\} \in \FF$.
\end{itemize}
We say that such $t$ as in the last property is $\FF$-{\em splitting}. 
For $s\in p$ we define its {\em splitting history} as
\[
{\rm deg}_p (s)=\{i < \dom(s):\,(\exists t\in p)\,[t\rest i =s\rest i \mbox{ and }t(i)\neq s(i)]\}.
\]

\begin{definition}\label{gSacks:def} (1) The $\FF$-perfect tree forcing $\mathbb P=\mathbb P(\FF)$ is defined as follows:

The conditions in $\mathbb P$ are $\FF$-perfect subtrees $p$ of ${}^{{<\kappa}}\kappa$, satisfying the following additional conditions:
\begin{description}
\item [{\rm (o)}] For every $s\in p$, either $| \{\alpha<\kappa:\,s\frown \alpha\in p\} | = 1$, or $\{\alpha<\kappa:\,s\frown \alpha\in p\}\in \FF$.
\item [{\rm (a) [Closure of $p$]}] If $\delta<\kappa$ is a limit ordinal and $s\in {}^{\delta} \kappa$ is such that for all $\alpha<\delta$ we have $s\rest\alpha\in p$, then $s\in p$.
\item [{\rm (b) [Closure of splitting for every node]}] For every $s\in p$, the set ${\rm deg}_p (s)$ is a closed subset of $\dom(s)$.
\end{description}
The order is defined by $p\le q$ if $p\supseteq q$.

The {\em stem} of $p$,
denoted by $s[p]$ is defined by
\[
s[p]=\bigcup \{s\in p:\,\mbox{no }t\subset s \mbox{ is splitting and }s \mbox{ is splitting}\}.
\]
(Note that this is well-defined by (o)).

\smallskip

{\noindent (2)} For $\zeta<\kappa$ the order $\le_\zeta$ is defined by letting 
$p\le_\zeta q$ if 
\begin{itemize}
\item $p\le q$,
\item for all $s\in p$ with ${\rm otp}({\rm deg}_p(s))< \zeta$ we have $s\in q$.
\end{itemize}

\smallskip

{\noindent (3)} For $p$ as above, we let $[p]$ be the set of $\kappa$-branches of $p$, namely
\[
[p]=\{f\in {}^\kappa\kappa:\,(\forall \alpha<\kappa) f\rest\alpha\in p\}.
\]

\end{definition}

Let us now keep the notation from Definition \ref{gSacks:def} and make several observations. 

\begin{observation}\label{obs:basicSacks} {\noindent (0)} If $p\supseteq q$ and $s\in q$, then ${\rm deg}_p (s)\supseteq {\rm deg}_q (s)$
Each $\le_\zeta$ is a transitive relation.

[The first statement follows because $p\supseteq q$. If $p\le_\zeta q$ and $q\le_\zeta r$ then $p\supseteq q \supseteq r$ and hence $p\supseteq r$.
If $s\in p$ is such that ${\rm otp}({\rm deg}_p(s))< \zeta$, then $s\in q$ by the definition of $\le_\zeta$ and ${\rm otp}({\rm deg}_q(s))
\le  {\rm otp}({\rm deg}_p(s)) < \zeta$, hence $s\in r$.]

{\noindent (1)} $p\le_{\zeta+1} q$ implies that $q\cap {}^{\zeta} \kappa=p\cap {}^{\zeta} \kappa$.

[If $s\in p\cap {}^{\zeta} \kappa$ then ${\rm deg}(s)\subseteq \dom(s)=\zeta$, hence ${\rm otp}({\rm deg}(s))\le \zeta<\zeta+1$.
So if $p\le_{\zeta+1} q$, we have $s\in q$.]

{\noindent (2)} If $\zeta\le\xi$ and $p\le_\xi q$, then $p\le_\zeta q$ and if $\delta<\kappa$ is a limit, then $\le_\delta=\bigcap_{\zeta<\delta}\le_\zeta$

[Follows by the definition.]

{\noindent (3)} For $\delta$ limit $\in (0,\kappa)$, if $\langle p_\zeta:\,\zeta<\delta\rangle$ and $q$ are such that $p_\zeta\le_\zeta q$ for every 
 $\zeta<\delta$, then  $p_\zeta\le_\delta q$ for every $\zeta<\delta$.
 
 [Follows by (2).]
 
{\noindent (4)} If $f\in [p]$, then $C[f]=\{\alpha<\kappa:\,\{i<\kappa:\,f\rest \alpha\frown i\in p\}\in \FF\}$ is a club of $\kappa$.

[To see that $C[f]$ is unbounded, let $\alpha_0<\kappa$ be arbitrary and let $s=f\rest\alpha_0$, hence $s\in p$. Let $s\subseteq t$ for some $t$ such that
$t$ is $\FF$-splitting in $p$, where $t$ is the shortest such node. Hence $t\subseteq f$ and letting $\alpha=\dom(t)$ we obtain $\alpha_0\le\alpha$ and
$\alpha\in C[f]$.

To see that $C[f]$ is closed, suppose that $\delta<\kappa$ is such that $\delta=\sup(C[f]\cap\delta)$ and let $s=f\rest(\delta+1)$. Then we have that 
$C[f]\cap\delta={\rm deg}_p (s)\cap\delta$, so by the closure of splitting of every node, we have that $\delta\in {\rm deg}_p(s)$ and hence $\delta\in C[f]$.]
\end{observation}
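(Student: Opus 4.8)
\emph{Plan.} The five clauses split into two kinds. Clauses (0)--(3) are purely about subtrees of ${}^{<\kappa}\kappa$ under reverse inclusion and the derived relations $\le_\zeta$, and each is obtained by unwinding the definitions of ${\rm deg}_p$ and of $\le_\zeta$. For (0), a node witnessing $i\in{\rm deg}_q(s)$ already lies in $q\subseteq p$, so ${\rm deg}_p(s)\supseteq{\rm deg}_q(s)$, and transitivity of $\le_\zeta$ follows since a node $s\in p$ with ${\rm otp}({\rm deg}_p(s))<\zeta$ is pushed into $q$, where its splitting history can only shrink and so still has order type below $\zeta$. For (1), a node of $p$ lying in ${}^{\zeta}\kappa$ has splitting history contained in $\zeta$, hence of order type below $\zeta+1$, so it survives into any $q$ with $p\le_{\zeta+1}q$, while the reverse inclusion is just $q\subseteq p$. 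For (2), monotonicity in $\zeta$ is immediate since lowering the threshold only weakens the constraint, and for a limit $\delta<\kappa$ one notes that ${\rm otp}({\rm deg}_p(s))<\delta$ holds iff it is below some $\zeta<\delta$, so $\le_\delta$ and $\bigcap_{\zeta<\delta}\le_\zeta$ demand preservation of exactly the same nodes. Clause (3) then follows from (2).

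The substance is clause (4). Note first that $C[f]$ is precisely $\{\alpha<\kappa:f\rest\alpha \text{ is }\FF\text{-splitting in }p\}$, and by condition (o) this coincides with the set of $\alpha$ for which $f\rest\alpha$ has more than one immediate successor in $p$; I would prove $C[f]$ club in that form. For unboundedness, given $\alpha_0<\kappa$, apply $\FF$-perfectness to $f\rest\alpha_0\in p$ to obtain an $\FF$-splitting node $t\supseteq f\rest\alpha_0$ in $p$ of least possible length. If $t$ were not an initial segment of $f$, it would diverge from $f$ at some $\beta$ with $\alpha_0\le\beta<\dom(t)$, and then $f\rest\beta$ would have both $f\rest(\beta+1)$ and $t\rest(\beta+1)$ as successors in $p$, hence be a shorter $\FF$-splitting node extending $f\rest\alpha_0$, a contradiction. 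So $t\subseteq f$ and $\dom(t)\in C[f]$ with $\dom(t)\ge\alpha_0$. For closedness, let $\delta<\kappa$ be a limit with $\delta=\sup(C[f]\cap\delta)$ and put $s=f\rest(\delta+1)\in p$; for each $i<\delta$ one has $i\in{\rm deg}_p(s)$ iff $f\rest i$ has a successor in $p$ other than $f\rest(i+1)$ iff $i\in C[f]$, so ${\rm deg}_p(s)\cap\delta=C[f]\cap\delta$ is cofinal in $\delta$. Since axiom (b) makes ${\rm deg}_p(s)$ a closed subset of $\dom(s)=\delta+1$, we get $\delta\in{\rm deg}_p(s)$, and by the same equivalence at $i=\delta$ this says $f\rest\delta$ has more than one successor in $p$, i.e.\ $\delta\in C[f]$.

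I expect the only real friction to be in clause (4), in two places: the identification of ${\rm deg}_p(s)\cap\delta$ with $C[f]\cap\delta$, which relies on condition (o) to pass between ``more than one successor'' and ``$\FF$-splitting'' and which is exactly what lets the structural closure axiom (b) be brought to bear to yield closedness of $C[f]$; and the minimality trick in the unboundedness step, which is the one mildly non-routine combinatorial point. Clauses (0)--(3) are definitional bookkeeping.
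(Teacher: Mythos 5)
Your proposal is correct and follows essentially the same route as the paper: clauses (0)--(3) by unwinding the definitions of ${\rm deg}_p$ and $\le_\zeta$, unboundedness of $C[f]$ via a shortest $\FF$-splitting node above $f\rest\alpha_0$, and closedness via the identification of $C[f]\cap\delta$ with ${\rm deg}_p(f\rest(\delta+1))\cap\delta$ together with condition (b). The only difference is that you spell out two steps the paper leaves implicit, namely why the minimal splitting node $t$ must lie along $f$ (using condition (o)) and why $C[f]$ coincides with the splitting history, which is a welcome refinement rather than a different argument.
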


\begin{notation}\label{s-tree} Suppose that $p\in \mathbb P$ and $s\in p$. Let
\[
p_s=\{t\in p:\,t\subseteq s\vee s\subseteq t\}.
\]
Note that under these circumstances $p_s\in  \mathbb P$ and $p_s\ge p$.
\end{notation}

\begin{lemma}\label{lem:generic-kappa-function} Suppose that $G$ is a $\mathbb P(\FF)$-generic filter. Then 
\[
g=\bigcup \{s[p]:\,p\in G\}
\]
is a function from $\kappa$ to $\kappa$ in $\mathbf V[G]$.
\end{lemma}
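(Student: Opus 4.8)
The plan is to check three things about $g$: that its range is contained in $\kappa$, that it is a function (i.e.\ the nodes $s[p]$, $p\in G$, are pairwise comparable), and that its domain is all of $\kappa$. The first is immediate, since each $s[p]$ is a node of ${}^{<\kappa}\kappa$ and hence takes values $<\kappa$.

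For the function property, the key sub-claim is: if $p,r\in\mathbb P$ and $r\ge p$ (equivalently $r\subseteq p$ as trees), then $s[p]\subseteq s[r]$. To prove it, recall that every proper initial segment of $s[p]$ is non-splitting in $p$, hence by clause (o) of Definition \ref{gSacks:def} has a unique immediate successor in $p$. Since $r\subseteq p$ and $r$ is $\FF$-perfect, so that every node of $r$ has a successor in $r$, a transfinite induction on the level, using the closure clause (a) at limit levels, shows that every initial segment of $s[p]$, and $s[p]$ itself, lies in $r$, and that no proper initial segment of $s[p]$ splits in $r$. Hence $s[p]\subseteq s[r]$. Now, given $p,q\in G$, choose $r\in G$ with $r\ge p$ and $r\ge q$; then $s[p]\subseteq s[r]$ and $s[q]\subseteq s[r]$, so $s[p]$ and $s[q]$ are comparable. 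Therefore $g=\bigcup\{s[p]:p\in G\}$ is a function, and $g\rest\dom(s[p])=s[p]$ for every $p\in G$.

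For the domain, I claim that for each $\alpha<\kappa$ the set $D_\alpha=\{p\in\mathbb P:\dom(s[p])>\alpha\}$ is dense. Given $q\in\mathbb P$, use that $\sup\{\dom(s):s\in q\}=\kappa$ to pick $s\in q$ with $\dom(s)>\alpha$, and then use $\FF$-perfectness to find an $\FF$-splitting node $t\in q$ with $t\supseteq s$, so $\dom(t)>\alpha$. Consider $q_t$ from Notation \ref{s-tree}, which is a condition with $q_t\ge q$. Inside $q_t$ the nodes strictly below $t$ are precisely the proper initial segments of $t$, each with a unique successor, while the successors of $t$ in $q_t$ are the same as in $q$; hence $t$ is the first splitting node of $q_t$, so $s[q_t]=t$ and $q_t\in D_\alpha$. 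By genericity $G\cap D_\alpha\neq\emptyset$ for each $\alpha<\kappa$, so $\dom(g)=\bigcup_{p\in G}\dom(s[p])=\kappa$.

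Putting this together, $g$ is a function from $\kappa$ to $\kappa$, and it belongs to $\mathbf V[G]$ since it is definable from the generic filter $G$. The point requiring care is the sub-claim $s[p]\subseteq s[r]$ for $r\ge p$: one must rule out that shrinking $p$ to a subtree creates a split strictly below $s[p]$, which is exactly what clause (o) together with $\FF$-perfectness of $r$ prevents, and one must check that the limit levels of the initial branch of $p$ still lie in $r$, which is guaranteed by the closure clause (a).
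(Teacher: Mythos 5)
Your proof is correct and follows essentially the same route as the paper: compatibility of the stems via a common extension $r\in G$, plus density of the sets $D_\alpha$ to get $\dom(g)=\kappa$. The only difference is that you spell out the monotonicity sub-claim $s[p]\subseteq s[r]$ for $r\ge p$ (which the paper asserts without proof) and pass to a splitting node $t$ so that $s[q_t]=t$, a slightly more careful version of the paper's ``$s[q]=s$'' step; both refinements are sound.
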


\begin{proof} To show that $g$ is a function, suppose that $p,q\in G$. Hence there is $r\in G$ with $p,q\le r$ and in particular $s[r]\supseteq s[p], s[q]$.
Hence $s[p]$ and $s[q]$ are compatible as functions.

It follows from this and the definition of $\mathbb P(\FF)$ that $g$ is a function whose domain is some $\alpha\le\kappa$. To show that the domain is
actually $\kappa$ we need to demonstrate that for every $\alpha<\kappa$ the set $\DD_\alpha=\{q\in \mathbb P(\FF):\,\dom(s[q])\ge\alpha\}$ is dense.
Suppose that $p\in \mathbb P(\FF)$. By the definition of what a perfect tree is, $p$ has a node of $s$ height $\ge\alpha$. Then by
letting $q=p_s$ we obtain that $p\le q$ and $s[q]= s$. Therefore $q\in \DD_\alpha$.
$\eop_{\ref{lem:generic-kappa-function}}$
\end{proof}

\begin{notation}\label{genPrikrygeneric} The $g$ from $\kappa$ to $\kappa$ as in Lemma \ref{lem:generic-kappa-function} is called {\em the generic function added by} $\mathbb P(\FF)$.
\end{notation}

\begin{lemma}\label{when-closed} ${\mathbb P}(\FF)$ is $(<\kappa)$-closed and in fact every increasing sequence 
$\bar{p}=\langle p_\zeta:\,\zeta<\lambda\rangle$ for any $\lambda<\kappa$ has the least upper bound which is $q=\bigcap_{\zeta<\lambda} p_\zeta$.
\end{lemma}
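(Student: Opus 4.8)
The plan is to verify directly that $q=\bigcap_{\zeta<\lambda}p_\zeta$ is a condition in $\mathbb P(\FF)$ and that it is the least upper bound of $\bar p$ in the order $\le$, recalling that $p\le q$ means $p\supseteq q$, so that an upper bound of the $p_\zeta$ is precisely a common \emph{subtree}, and the least such is the intersection. Since each $p_\zeta\supseteq q$ is immediate, the only upper bound issue is that $q$ must actually be a condition; and any other common subtree $r$ satisfies $r\subseteq p_\zeta$ for all $\zeta$, hence $r\subseteq q$, i.e. $q\le r$, giving leastness for free. So the whole content is: \emph{$q$ is an $\FF$-perfect tree satisfying} (o), (a), (b).

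First I would observe that $q$ is a subtree of ${}^{<\kappa}\kappa$ (an intersection of subtrees is downward closed). For $(<\kappa)$-closure we use that $\bar p$ is $\le$-increasing, i.e. $p_\zeta\supseteq p_{\zeta+1}$ and at limits $p_\varepsilon\subseteq p_\zeta$ for $\zeta<\varepsilon$; actually for the lemma we only need increasing in $\le$, i.e. a $\subseteq$-decreasing chain of trees. The key point to extract is a \emph{stabilisation} fact: since $\lambda<\kappa$ and $\FF$ is $(<\kappa)$-complete, for each node $s$ the immediate-successor sets $S_\zeta(s)=\{\alpha<\kappa:\, s\frown\alpha\in p_\zeta\}$ form a $\subseteq$-decreasing chain of length $\lambda$, each either a singleton or in $\FF$; their intersection $S(s)=\{\alpha:\,s\frown\alpha\in q\}$ is then, by $(<\kappa)$-completeness of $\FF$ (in the case all are in $\FF$) or trivially (if some is a singleton), again either empty, a singleton, or in $\FF$. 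This gives condition (o) for $q$, and it is the engine for $\FF$-perfectness. To show $q$ is $\FF$-perfect: given $s\in q$, for each $\zeta$ pick an $\FF$-splitting extension; but the extensions may differ with $\zeta$, so instead I would argue along the branch — build inside $q$ an increasing sequence of $\FF$-splitting nodes by repeatedly using that in every $p_\zeta$ splitting is cofinal, together with the stabilisation above to push splitting down into the intersection. Concretely: to see $q$ has height $\kappa$, fix $\alpha<\kappa$; since each $p_\zeta$ has nodes of every height below $\kappa$ it suffices to produce one node of $q$ of height $\geq\alpha$, and here I use that the stems $s[p_\zeta]$ form a $\subseteq$-increasing chain of functions, whose union has domain $\geq$ each $\dom(s[p_\zeta])$, but to reach $\alpha$ I iterate stabilisation $\alpha$-many steps — this is the first place where the $(<\kappa)$-closure and completeness of $\FF$ are genuinely used.

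Next I would check (a) and (b) for $q$. For (a) [closure of $q$]: if $\delta<\kappa$ is limit and $s\in{}^\delta\kappa$ has all proper restrictions in $q$, then for each $\zeta<\lambda$ all restrictions of $s$ lie in $p_\zeta$, so by (a) for $p_\zeta$ we get $s\in p_\zeta$; hence $s\in q$. For (b) [closure of splitting]: the point is that ${\rm deg}_q(s)$ need not equal any single ${\rm deg}_{p_\zeta}(s)$, but by Observation \ref{obs:basicSacks}(0) we have ${\rm deg}_{p_\zeta}(s)\supseteq {\rm deg}_q(s)$ for $s\in q$, and in fact ${\rm deg}_q(s)=\{i<\dom(s):\,S(s\rest i)\ \text{is not a singleton, i.e.}\ \in\FF\}=\bigcap_{\zeta<\lambda}{\rm deg}_{p_\zeta}(s)$ once one notes, via stabilisation, that $i\in{\rm deg}_q(s)$ iff $S(s\rest i)\in\FF$ iff $S_\zeta(s\rest i)\in\FF$ for every $\zeta$. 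An intersection of fewer than $\kappa$ closed subsets of $\dom(s)$ is closed, so (b) follows.

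The main obstacle, and the step to write most carefully, is $\FF$-perfectness of $q$: showing that every $s\in q$ has an $\FF$-splitting extension \emph{inside $q$}, and that $q$ has height $\kappa$. The naïve attempt fails because the witnessing splitting nodes of the various $p_\zeta$ above $s$ need not be comparable, so they need not live in the intersection. The fix is the stabilisation argument: run a recursion of length $<\kappa$ in which at each step one extends the current node to the first splitting node of the \emph{current} $p_\zeta$ while simultaneously decreasing $\zeta$; since $\lambda<\kappa$ this can be arranged so that after $\lambda$ steps the chain of extensions stabilises into a node whose immediate-successor set, being an intersection of $\lambda$-many $\FF$-sets, lies in $\FF$ by $(<\kappa)$-completeness of $\FF$, and which therefore is $\FF$-splitting in $q$. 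Once that lemma is in hand, everything else is the bookkeeping sketched above, and we conclude that $q\in\mathbb P(\FF)$, that $p_\zeta\le q$ for all $\zeta<\lambda$, and that $q$ is $\subseteq$-maximal among common subtrees, hence the least upper bound. $\eop_{\ref{when-closed}}$
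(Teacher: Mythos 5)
Your skeleton coincides with the paper's: since the order is reverse inclusion, leastness is free and the whole content is that $q=\bigcap_{\zeta<\lambda}p_\zeta$ is a condition; your verifications of (o), (a) and (b) via the decreasing chains of successor sets and $(<\kappa)$-completeness of $\FF$ are essentially the paper's. The genuine gap is exactly at the step you flag as the crux, the $\FF$-perfectness (and height $\kappa$) of $q$, and your sketch of the fix would not work as written. The recursion ``extend the current node to the first splitting node of the current $p_\zeta$ while decreasing $\zeta$'' is not well specified: $\zeta$ cannot decrease along a transfinite recursion, and, more importantly, a node chosen inside $p_\zeta$ need not belong to $p_{\zeta+1}$, so it is not clear the recursion can be continued at all. (That the minimal $\FF$-splitting nodes above a fixed $s$ \emph{do} form a chain is a real fact, resting on the uniqueness of the minimal splitting node, which comes from clause (o).) Moreover, in the non-trivial case the node $t$ you end with is a limit of the nodes chosen along the way, and it does not ``stabilise''; its immediate-successor sets in the various $p_\zeta$ are \emph{not} the $\FF$-sets you met during the construction -- those were successor sets of earlier, shorter nodes. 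To know that $t$ is $\FF$-splitting in every $p_\zeta$, so that $(<\kappa)$-completeness can be applied to the successor sets of $t$ itself, one needs the closure clauses of Definition \ref{gSacks:def}: clause (a) to get $t\in p_\zeta$ for every $\zeta$, and clause (b) (closedness of ${\rm deg}_{p_\zeta}$), together with (o), to get that $t$ splits in every $p_\zeta$. Your sketch never invokes (a) or (b) at this point, and this is precisely what the paper's proof supplies: it fixes $s\in q$, takes for each $\zeta$ the minimal-height $\FF$-splitting node $t_\zeta\supseteq s$ of $p_\zeta$, notes the heights are non-decreasing, and treats separately the case where $\langle t_\zeta\rangle$ stabilises (intersect the successor sets of the fixed node) and the case where the heights increase cofinally (take $t=\bigcup_\zeta t_\zeta$, use (a) and (b) as above, then intersect).

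Your route to height $\kappa$ is also off: the stems $s[p_\zeta]$ form an increasing chain, but they may all be equal (or bounded), so their union gives no nodes of large height, and ``iterate stabilisation $\alpha$-many steps'' is not worked out -- at limit levels it again needs clause (a) and the nonemptiness of the intersected successor sets. The paper isolates this as the main point (Claim \ref{cl:height}) and argues by contradiction at the least level $\alpha$ missing from $q$: if $\alpha$ is a limit, clause (a) applies; if $\alpha=\beta+1$, take $s\in q$ of height $\beta$ and use the dichotomy from (o) -- either every $A_\zeta=\{i<\kappa:\,s\frown i\in p_\zeta\}$ is in $\FF$, whence $\bigcap_{\zeta<\lambda}A_\zeta\in\FF$ is nonempty, or from some $\zeta^\ast$ on the $A_\zeta$ are one and the same singleton -- so $s$ has a successor in $q$, a contradiction. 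Your proposal needs this (or an equivalent) argument written out; as it stands the crucial step is asserted rather than proved.
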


\begin{proof}  The lemma is trivially true for $\kappa=\aleph_0$, so let us assume that $\kappa>\aleph_0$. Given $\lambda <\kappa$ and let
$\bar{p}=\langle p_\zeta:\,\zeta<\lambda\rangle$ an increasing sequence in $\mathbb P$. 
Let $q=\bigcap_{\zeta<\lambda} p_\zeta$. Once we check that $q$ is a condition and an upper bound of $\bar{p}$, it will immediately follow that it is the lub. Clearly $\langle \rangle \in q$, so $q\neq \emptyset$ and clearly $q$ is downward closed since each $p_\zeta$ is.
Let us check that $p$ is $\FF$-perfect, so let $s\in p$.

For $\zeta<\lambda$ let $t_\zeta\supseteq s$ be an $\FF$-splitting node in $p_\zeta$ with the least possible height and let 
$A_\zeta=\{i <\kappa:\,t_\zeta \frown i \in p_\zeta\}$. Hence the sequence $\langle \dom(t_\zeta):\;\zeta<\lambda\rangle$ is non-decreasing.

Suppose first that this sequence stabilises, so there is $\zeta^\ast<\lambda$ such that for all $\zeta\in [\zeta^\ast, \lambda)$ we have $t_\zeta$ is a fixed node
$t$. Then $t\in q$ and $A\deq \bigcap_{\zeta\in [\zeta^\ast, \lambda)}  A_\zeta \in \FF$, by the $(<\kappa)$-closure of $\FF$. Moreover, if $i\in A$, then $t\frown i\in p_\zeta$ for every 
$\zeta\in [\zeta^\ast, \lambda)$ and hence $t$ is an $\FF$-splitting node in $q$.

Otherwise, the sequence $\langle \dom(t_\zeta):\;\zeta<\lambda\rangle$ is strictly increasing, with limit some limit ordinal $\varepsilon<\kappa$. Let
$t=\bigcup_{\zeta<\lambda} t_\zeta$. Then by the closure property of each $p_\zeta$ we have $t\in \bigcap_{\zeta<\lambda} p_\zeta=q$, and moreover,
by the closure of splitting for every node, we have that $t$ is $\FF$-splitting in every $p_\zeta$. Let $S_\zeta=\{i <\kappa:\,t \frown i\in p_\zeta\}$, so
the sequence $\langle S_\zeta:\,\zeta<\lambda\rangle$ is a decreasing sequence of elements of $\FF$. By the $(<\kappa)$-closure of $\FF$ we have
that $S\deq \bigcap_{\zeta<\lambda} S_\zeta\in \FF$ and by definition of $q$ we have $t\frown i\in q$ for every $i\in S$. Hence $t$ is $\FF$-splitting in $q$.

Let us check that the height of $q$ is $\kappa$. This is the main point, so we isolate it in a claim.

\begin{claim}\label{cl:height} The height of $q$ is $\kappa$.
\end{claim}

\begin{proof}[{\bf Proof of the Claim}] First note that the situation of $\lambda$ being a successor ordinal $\zeta+1$ is clear, since then $q=p_{\zeta}$. So, without
loss of generality $\lambda$ is a limit ordinal $>0$.

Suppose for a contradiction that the claim is false, so let $\alpha<\kappa$ be the first ordinal such that $q$ has no nodes of level $\alpha$.

If $\alpha$ is a limit ordinal, let $\langle s_i:\,i<\alpha\rangle$ be an increasing sequence of nodes of $q$ with $\sup({\rm ht}(s_i))=\alpha$.
Hence $s=\bigcup_{i<\kappa} s_i$ is a function. By the requirement (a) on the closure of conditions in ${\mathbb P}(\FF)$, we have that for every
$\zeta<\lambda$ the function $s$ is an element of $p_\zeta$. Hence $s\in q$, a contradiction.

So we conclude that $\alpha=\beta+1$ for some ordinal $\beta<\kappa$. Let $s\in q$ be on the level $\beta$. For $\zeta<\lambda$ let $A_\zeta=\{i <\kappa:\,s\frown i \in p_\zeta\}$. If all $A_\zeta\in \FF$ then $A=\bigcap_{\zeta<\lambda} A_\zeta\in \FF$ by the $(<\kappa)$-completeness
of $\FF$ and hence for every $i \in A$ we have $s\frown i \in \bigcap_{\zeta<\lambda} p_\zeta$, a contradiction. So there must be $\zeta^\ast
<\lambda$ such that for all $\zeta\in [\zeta^\ast, \lambda)$ the set $A_\zeta$ is a singleton. Therefore the unique $i$ in $A_\zeta$ is a fixed ordinal for all
$\zeta\in [\zeta^\ast, \lambda)$. Hence, $s\frown i\in q$, which is the final contradiction.
$\eop_{\ref{cl:height}}$
\end{proof}

We check the other required properties of $q$ in a similar fashion:

\begin{description}
\item[{\rm (o)}] If $s\in q$ is a splitting node in $q$, then it is a splitting node in each $p_\zeta$, with the set $A_\zeta=\{i <\kappa:\,t_\zeta \frown i \in p_\zeta\}$ in
$\FF$. Hence $A\deq \bigcap_{\zeta<\lambda} A_\zeta\in \FF$ and for each $i\in A$ we have $s\frown i\in q$. Since $q$ is $\FF$-perfect by the above,
if $s$ is not splitting we still have that it is not terminal, so $|\{i <\kappa:\,s\frown i\in q\}|=1$.

\item[{\rm (a, b)}] Suppose that $\delta<\kappa$ is a limit ordinal and that for $s\in {}^\delta\kappa$ we have
\[
\sup\{\alpha<\delta: s\rest \alpha\in q  \mbox{ [respectively, splits in }q]\}=\delta.
\]
Then in particular 
\[
\sup\{\alpha<\delta: s\rest \alpha\in p_\zeta \mbox{[ respectively, splits in }p_\zeta\}=\delta,
\]
for every $\zeta<\lambda$. Hence $s\in p_\zeta$ [respectively $s$ splits in $p_\zeta$] for every $\zeta<\lambda$ and so $s\in q$ [respectively, $s$ splits in $q$, by the
$(<\kappa)$-completeness of $\FF$].
\end{description}
$\eop_{\ref{when-closed}}$
\end{proof}

\begin{lemma}[{\bf Fusion Lemma}]\label{Sacks:Fusion} Suppose that $\delta\le \kappa$ is a limit ordinal and $\langle p_\zeta:\,\zeta<\delta\rangle$ is such that $p_\zeta\le_{\zeta+1}  p_{\zeta+1}$ for each $\zeta$, while for limit ordinals $\varepsilon <\delta$ we have $p_\varepsilon\le _\delta p_\delta$.
Then $q=\bigcap_{\zeta<\delta}p_\zeta$ is a condition such that for every $\zeta<\delta$ we have $p_\zeta\le_\zeta q$. Moreover, it is the least condition with that property.
\end{lemma}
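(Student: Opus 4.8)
The plan is to follow the same pattern as the proof of Lemma~\ref{when-closed}, showing that $q=\bigcap_{\zeta<\delta}p_\zeta$ is $\FF$-perfect, satisfies the extra conditions (o), (a), (b), and has height $\kappa$, and then separately verifying that $p_\zeta\le_\zeta q$ for every $\zeta<\delta$ and that $q$ is the least such condition. First I would observe that, using Observation~\ref{obs:basicSacks}(2)--(3), the fusion hypothesis implies $p_\xi\le_\zeta p_\zeta$ for all $\xi<\zeta<\delta$, so $\langle p_\zeta:\zeta<\delta\rangle$ is in particular $\le$-increasing; hence $q$ is downward closed, contains $\langle\rangle$, and is a subtree. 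The point $p_\xi\le_\zeta q$ would then be checked directly from the definition of $\le_\zeta$: given $s\in q$ with $\otp(\deg_q(s))<\zeta$, I need $s\in p_\xi$; since $s\in p_{\zeta}$ (say, choosing any index $\ge\zeta$) and $\deg_{p_{\zeta}}(s)\subseteq\deg_q(s)$ has order type $<\zeta$, the relation $p_\xi\le_\zeta p_\zeta$ gives $s\in p_\xi$. Transitivity of the $\le_\zeta$'s (Observation~\ref{obs:basicSacks}(0)) lets me chain this cleanly.

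Next I would verify $\FF$-perfectness of $q$ and conditions (o), (a), (b) essentially verbatim as in Lemma~\ref{when-closed}: given $s\in q$, take for each $\zeta$ a least-height $\FF$-splitting node $t_\zeta\supseteq s$ in $p_\zeta$; the heights are non-decreasing, so either they stabilise at a common node $t$ — in which case $\bigcap_\zeta A_\zeta\in\FF$ by $(<\kappa)$-completeness and $t$ splits in $q$ — or they strictly increase to a limit $\varepsilon<\kappa$, and then $t=\bigcup_\zeta t_\zeta\in q$ by closure (a) and splits in $q$ by closure of splitting (b) together with $(<\kappa)$-completeness. Conditions (a) and (b) for $q$ follow because membership/splitting along a cofinal set of levels below a limit $\delta'<\kappa$ transfers to each $p_\zeta$ and hence back to $q$. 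The claim that $q$ has height $\kappa$ is, as before, the one genuinely delicate point, so I would isolate it in a sub-claim and argue by taking the least level $\alpha$ absent from $q$: the limit case is killed by closure (a), and the successor case $\alpha=\beta+1$ is killed by noting that for $s\in q$ of level $\beta$, either all $A_\zeta=\{i:s\frown i\in p_\zeta\}$ are in $\FF$ (so their intersection is, giving a successor in $q$) or they are eventually a fixed singleton $\{i\}$, and then $s\frown i\in q$.

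One subtlety specific to this lemma, as opposed to Lemma~\ref{when-closed}, is that the sequence need only be $\le_{\zeta+1}$-increasing at successor steps and $\le_\delta$-coherent at limits, rather than $\le$-increasing with least upper bounds computed stepwise; but since $\le_{\zeta+1}\subseteq\le$ this is a weaker hypothesis on the indexing and a stronger conclusion ($p_\zeta\le_\zeta q$, not merely $p_\zeta\le q$), and the intersection computation is insensitive to this. Finally, minimality: if $q'$ is any condition with $p_\zeta\le_\zeta q'$ for all $\zeta<\delta$, then in particular $p_\zeta\le q'$, i.e.\ $q'\subseteq p_\zeta$ for every $\zeta$, so $q'\subseteq\bigcap_{\zeta<\delta}p_\zeta=q$, that is $q\le q'$; hence $q$ is the $\le$-least such condition.

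I expect the height-$\kappa$ sub-claim to be the main obstacle, exactly as flagged in the statement (``This is the main point''), since it is where the interaction between the $(<\kappa)$-completeness of $\FF$, the closure conditions (a) and (b), and the bookkeeping on splitting heights all has to come together; the rest is a routine adaptation of the $(<\kappa)$-closure argument with $\le$ replaced by the finer orders $\le_\zeta$.
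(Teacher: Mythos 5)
There is a genuine gap, and it sits exactly at the point you dismiss with ``the intersection computation is insensitive to this''. Your plan handles $\FF$-perfectness and the height of $q$ ``essentially verbatim as in Lemma \ref{when-closed}'', i.e.\ by intersecting the splitting sets $A_\zeta$ and using $(<\kappa)$-completeness of $\FF$, and by taking unions $t=\bigcup_\zeta t_\zeta$ of splitting nodes whose heights converge to some $\varepsilon<\kappa$. This works when $\delta<\kappa$ (and indeed the paper simply quotes Lemma \ref{when-closed} for that case, since the sequence is in particular $\le$-increasing of length $<\kappa$), but it fails in the main case $\delta=\kappa$: an intersection of $\kappa$-many members of $\FF$ need not be in $\FF$ (nor even nonempty), and if the heights of the $t_\zeta$ do not stabilise along a length-$\kappa$ sequence they are cofinal in $\kappa$, so $\bigcup_\zeta t_\zeta$ is not a node of ${}^{<\kappa}\kappa$ at all. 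The same objection kills your height sub-claim for $\delta=\kappa$ (``all $A_\zeta\in\FF$, so their intersection is''). A merely $\le$-increasing $\kappa$-sequence typically has a degenerate intersection; the fusion hypothesis is precisely what must be used here, and your argument never uses it. The paper's proof for $\delta=\kappa$ is different: for $s\in q$ the order types ${\rm otp}({\rm deg}_{p_\zeta}(s))$ stabilise at some $\zeta^\ast$, one picks the next $\FF$-splitting node $t\supseteq s$ in $p_{\zeta^\ast}$, and then each $t\frown i\in p_{\zeta^\ast}$ has splitting history of order type $<\zeta^\ast$, so by the relations $\le_\zeta$ these nodes persist in \emph{every} later $p_\zeta$; hence the splitting set above $t$ is eventually a single fixed $A\in\FF$, with no $\kappa$-fold intersection needed. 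Likewise the height is obtained not by a least-missing-level argument but by constructing nodes $s_\zeta\in p_{\zeta+1}$ with ${\rm otp}({\rm deg}_{p_{\zeta+1}}(s_\zeta))=\zeta$, which persist into $q$ and have domain of length at least $\zeta$.

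A secondary point: your verification of $p_\zeta\le_\zeta q$ has the definition backwards. You check that $s\in q$ with small splitting history lies in $p_\xi$, which is immediate from $q\subseteq p_\xi$ and is not what is required; what must be shown is that every $s\in p_\zeta$ with ${\rm otp}({\rm deg}_{p_\zeta}(s))<\zeta$ belongs to $q$, i.e.\ survives into all later $p_\xi$. That is the persistence argument (chaining $p_\zeta\le_\zeta p_\xi$ via transitivity and the inclusions $\le_\xi\subseteq\le_\zeta$), and it is also the engine behind the $\delta=\kappa$ perfectness and height arguments above, so once you state it correctly you should use it there too. Your minimality argument is fine.
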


\begin{proof} Clearly $q$ is subtree of ${}^{{<\kappa}}\kappa$ and $q\subseteq p_\zeta$ for any $\zeta<\delta$. It is also clear that if $q$ satisfies the properties required, it is the least condition with these properties, in the sense that if $r$ has the same properties then $q\le r$. So let us concentrate on showing the required properties of $q$.

Suppose that $\zeta<\delta$ and $s\in p_\zeta$ satisfies ${\rm otp}({\rm deg}_{p_\zeta}(s))< \zeta$ while $\zeta\le\xi<\kappa$. By the transitivity of
the relation $\le\zeta$ and the fact that $\zeta<\zeta'\implies \le_\zeta\supseteq \le_{\zeta'}$, we have that $p_\zeta\le_\zeta p_\xi$. Therefore $s\in p_\xi$.
Since $\xi$ was arbitrary, we obtain $s\in q$. 

The above argument also shows that the height of $q$ is $\kappa$ since we can construct by induction on $\zeta$ an increasing sequence of functions 
$\langle s_\zeta:\,\zeta<\kappa\rangle$ such that $s_\zeta\in p_{\zeta+1}$ and ${\rm otp}({\rm deg}_{p_{\zeta+1}}(s_\zeta))=\zeta$. The existence of
such $s_\zeta$ follows by the fact that $p_{\zeta+1}$ is perfect.
By the above, each $s_\zeta\in q$ and since ${\dom}(s_\zeta)\ge \zeta$, we have that the height of $q$ is at least $\kappa$. Of course, it 
cannot be greater than $\kappa$, so it is $\kappa$.

To finish, we need to prove the remaining requirements for $q$ to be $\FF$-perfect, so let $s\in q$. The proof splits into two cases.

\underline{$\delta <\kappa$}. This is the because by Lemma \ref{when-closed} and its proof we have that $q\in {\mathbb P}$, hence in particular it is 
$\FF$-perfect.

\underline{$\delta =\kappa$}.
Let $\alpha={\rm otp(deg}_q(s))$. Since 
$\langle {\rm otp(deg)}_{p_\zeta}(s):\,\zeta<\kappa \rangle$ is a non-decreasing sequence of ordinals, it must stabilise, so there is $\zeta^\ast$ such that for all $\zeta\in [\zeta^\ast,\kappa)$ we have
${\rm otp(deg}_q(s))=\alpha$. Without loss of generality, $\zeta^\ast>\alpha+1$. Let $t\supseteq s$, $t\in p_{\zeta^\ast}$ and $t$ is an $\FF$-splitting node
in $p_{\zeta^\ast}$ with no $\FF$-splitting nodes between $s$ and $t$. Hence ${\rm otp(deg}_{p_{\zeta^\ast}}(t))=\alpha$ and for any $i$ such that $t\frown i
\in p_{\zeta^\ast}$, we have
 ${\rm otp(deg}_{p_{\zeta^\ast}}(t\frown i))=\alpha+1< \zeta^\ast$. Therefore for all such $i$ we have that $t\frown i \in p_\zeta$ for any $\zeta\ge \zeta^\ast$
 and in particular the set $\{i<\kappa:\, t\frown i \in p_\zeta\}$ is a fixed set $A\in \FF$, for all $\zeta\in [\zeta^\ast, \kappa)$.
 Hence $t\in q$ is an $\FF$-splitting node, as witnessed by $A$, and is above $s$.
$\eop_{\ref{Sacks:Fusion}}$
\end{proof}

Now we prove Lemma \ref{Miller-density} which shows that under the assumption $\kappa^{<\kappa}=\kappa$, we have the {\em Mastering $\kappa$}
property.
\begin{lemma}[{\bf Mastering $\kappa$ Lemma}]\label{Miller-density} Assume $\kappa^{<\kappa}=\kappa$. Then 
$\mathbb P(\mathcal F)$ satisfies {\em Mastering $\kappa$}.
\end{lemma}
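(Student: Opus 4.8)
The plan is to show that for any dense set $\DD\subseteq\mathbb P(\FF)$, any $p\in\mathbb P(\FF)$ and any $\zeta<\kappa$, one can build $q\ge_\zeta p$ together with a set $\DD'\subseteq\DD$ of size $\le\kappa$ predense above $q$, using the $(<\kappa)$-closure (Lemma \ref{when-closed}) and the Fusion Lemma (Lemma \ref{Sacks:Fusion}). The overall strategy mirrors the classical Sacks/Miller argument: perform a fusion in which, at each splitting node already committed to $q$, one reads off below each of its immediate successors a condition lying in $\DD$ (possible since $\DD$ is dense), and then amalgamates these finitely-or-$\FF$-many conditions back into a single tree refining the current node. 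Care is needed because here splitting is $\FF$-wide, not binary, so at an $\FF$-splitting node $t$ one must choose, for each $i$ in a set $A_t\in\FF$, a condition $r_{t,i}\le (q_{\text{current}})_{t\frown i}$ with $r_{t,i}\in\DD$, and then form the amalgam $\bigcup_{i\in A_t} r_{t,i}$ together with $t$ and its predecessors; one checks this is again a condition in $\mathbb P(\FF)$ (closure under (o), (a), (b) is routine, and $\FF$-perfectness is inherited from the summands since $A_t\in\FF$).

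First I would set up the bookkeeping. Using $\kappa^{<\kappa}=\kappa$, enumerate in a single list of length $\kappa$ all pairs $(t, \text{(choice of which splitting successor to work below)})$, arranging that each node of the eventual $q$ below level $\eta$ is handled before stage $\eta$; concretely, one does a fusion $\langle q_\eta:\eta<\kappa\rangle$ with $q_0 = p$ (or $q_0 = p_{s}$ for a suitable stem if $\zeta>0$ demands we not disturb low levels — but in fact $q_0=p$ is fine since $\le_\zeta$ only constrains nodes of splitting-order-type $<\zeta$, and we simply never refine at such nodes), passing $q_\eta\le_{\eta+1}q_{\eta+1}$ at successors and taking intersections at limits, which by Fusion yields $q=\bigcap_\eta q_\eta$ with $p\le_\zeta q$ (indeed $p = q_0 \le_\eta q$ for all $\eta$, in particular $p\le_\zeta q$). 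At stage $\eta+1$ I would look at the next unhandled splitting node $t$ of $q_\eta$ with $\otp(\deg_{q_\eta}(t))\ge\zeta$ whose splitting level is $\le\eta$: for each immediate successor $t\frown i$ ($i$ ranging over the relevant $\FF$-set), choose $r_{t,i}\le (q_\eta)_{t\frown i}$ deciding membership in $G$ relative to $\DD$, i.e. with $r_{t,i}\in\DD$, shrink to a common $\FF$-set $A_t\in\FF$ of successors actually used, and let $q_{\eta+1}$ be $q_\eta$ with the cone above $t$ replaced by $t\cup\bigcup_{i\in A_t}r_{t,i}$. Because we only ever touch nodes of splitting-order-type $\ge\zeta$ (and because replacing the cone above such a $t$ does not change which nodes of smaller order type lie in the tree), we get $q_\eta\le_{\eta+1}q_{\eta+1}$, and more importantly $q_\eta\le_\zeta q_{\eta+1}$; passing through the limits via $\le_\delta=\bigcap_{\zeta'<\delta}\le_{\zeta'}$ (Observation \ref{obs:basicSacks}(2)) keeps everything coherent.

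Next I would verify predensity of $\DD'$. Set $\DD' = \{r_{t,i} : (t,i) \text{ used during the construction}\}$; this has size $\le\kappa$ since the construction has $\kappa$ stages each contributing $\le\kappa$ conditions (here $\kappa\cdot\kappa=\kappa$). To see $\DD'$ is predense above $q$, take any $q'\ge q$; since $q'$ is $\FF$-perfect, pick any branch $f\in[q']$ and find, along $f$, an $\FF$-splitting node $t\subset f$ of $q$ whose splitting-order-type in $q$ is $\ge\zeta$ and which was handled at some stage — such $t$ exists because, by the fusion construction, the splitting nodes of $q$ of order type $\ge\zeta$ along any branch are cofinal and all of them get handled. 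Letting $i = f(\dom t)$, we have $t\frown i\in q\subseteq q'$, hence $t\frown i\in q'$, and by construction $r_{t,i}$ and $q$ agree above $t\frown i$ in the sense that $q_{t\frown i}\le r_{t,i}$ — so $q'_{t\frown i}$ is a common extension of $q'$ and $r_{t,i}$, giving compatibility of $q'$ with a member of $\DD'$.

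The main obstacle is the amalgamation step: one must be sure that gluing the trees $\{r_{t,i}:i\in A_t\}$ above the single node $t$ produces an object still in $\mathbb P(\FF)$ — in particular that requirement (b) (closedness of $\deg_p(s)$ at every node) is preserved at nodes of the form $t\frown i$ and below, and that the stem/splitting structure of the $r_{t,i}$ is compatible with $t$ being the controlled splitting node. This is where Notation \ref{s-tree} ($p_s\in\mathbb P$, $p_s\ge p$) and the closure lemmas do the real work, and where one must take the $r_{t,i}$ slightly more carefully than "just in $\DD$": one first passes to $(q_\eta)_{t\frown i}$, then extends inside $\DD$, and one may further need to refine so that the new splitting pattern above $t\frown i$ has order type exactly matching what the fusion bookkeeping expects. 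The remaining verifications — that $q$ has height $\kappa$, that it is $\FF$-perfect, that $p\le_\zeta q$ — are exactly as in the Fusion Lemma and I would cite that proof rather than repeat it.
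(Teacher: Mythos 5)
Your overall idea (amalgamate conditions from $\DD$ above the successors of splitting nodes and fuse) is in the right family, but as written the construction has a genuine gap in its fusion discipline. You invoke Lemma \ref{Sacks:Fusion}, whose hypothesis requires $q_\eta\le_{\eta+1}q_{\eta+1}$ at every successor stage, i.e.\ every node of $q_\eta$ whose splitting history has order type $<\eta+1$ must survive into $q_{\eta+1}$. But at stage $\eta+1$ you allow yourself to handle a node $t$ with $\otp(\deg_{q_\eta}(t))\ge\zeta$ only, and replacing the cone above $t\frown i$ by some $r_{t,i}\in\DD$ typically prunes nodes of order type as low as $\otp(\deg_{q_\eta}(t))+1$, which for late stages $\eta>\otp(\deg_{q_\eta}(t))$ destroys nodes of order type $<\eta+1$. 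Your own justification (``replacing the cone above such a $t$ does not change which nodes of smaller order type lie in the tree'') only yields $q_\eta\le_{\otp(\deg(t))+1}q_{\eta+1}$, not $q_\eta\le_{\eta+1}q_{\eta+1}$; and a merely $\le_\zeta$-increasing $\kappa$-sequence need not have a fusion limit. You cannot repair this by insisting that a node be handled only at stages $\eta+1$ with $\eta\le\otp(\deg(t))$, because a single front (e.g.\ all nodes of order type exactly $\zeta$) can have $\kappa$ many members while your schedule processes one node per stage, so the bookkeeping promise that every relevant node ``gets handled'' cannot be kept; note also that one cannot choose $r_{t,i}\ge_{\eta+1}(q_\eta)_{t\frown i}$ inside $\DD$ to avoid the pruning, since dense sets need not be reachable by $\le_{\eta+1}$-extensions (that is exactly what Mastering is about). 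A secondary consequence is that the predensity argument, which needs every branch of $q$ to pass through a handled node with its chosen successors intact, is not secured.

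The natural repair is to process an entire front in a single amalgamation, and once you do that the first step already finishes the proof: this is the paper's argument. There one takes the $\zeta$-front of $p$ (all $s\in p$ with $\otp(\deg_p(s))=\zeta$, of which there are at most $\kappa$ since $\kappa^{<\kappa}=\kappa$), chooses for each front node $s$ and each $i$ with $s\frown i\in p$ a condition $r_{s,i}\ge p_{s\frown i}$ in $\DD$ (the paper phrases this via the equivalent ordinal-name form of Mastering from Observation \ref{obs:masteq}(2)), and sets $r=\bigcup_{s,i}r_{s,i}$. This single amalgam is a condition with $r\ge_{\zeta+1}p$ (hence $r\ge_\zeta p$), the family $\{r_{s,i}\}$ has size $\le\kappa$ and is predense above $r$ because any extension of $r$ deciding enough must be compatible with some $r_{s,i}$. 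No fusion of length $\kappa$, and no $\kappa$-stage bookkeeping, is needed for this lemma; the fusion machinery is reserved for the product and iteration arguments later in the paper.
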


\begin{proof} Suppose that $\zeta<\kappa$ and $p\forces``\name{x}\mbox{ is an ordinal}"$.
Let $\{s_\alpha:\,\alpha\le \alpha^\ast\}$ be an enumeration of all $s\in p$ with ${\rm otp}(\deg_p (s))=\zeta$, so $|\alpha^\ast|\le\kappa=|p|$.
For each $\alpha$ and $i<\kappa$ such that $s_\alpha\frown i\in p$, let $r_{\alpha, i}\ge p_{s_\alpha\frown i}$ be some condition that decides the value of $\name{x}$, say
$\gamma_{\alpha, i}$. Let $X=\{\gamma_{\alpha, i}:\,\alpha<\alpha^\ast, i<\kappa\}$, so $|X|\le\kappa$.

Now let $r=\bigcup_{\alpha<\alpha^\ast, i<\kappa} r_{\alpha, i}$ and observe that $r\in \mathbb P$ and $r\ge_{\zeta+1} p$. Suppose that $q\ge r$
and $q$ decides the value of $\name{x}$. Then $q$ must be compatible with some $r_{\alpha, i}$ and hence the value decided by $q$ for $\name{x}$
must be one among the $\gamma_{\alpha, i}$'s. Therefore $r\forces``\name{x}\in X"$.
$\eop_{\ref{Miller-density}}$
\end{proof}

\begin{corollary}\label{Miller-propertyB} If $\kappa^{<\kappa}=\kappa$ and $\mathcal F$ is $(<\kappa)$-complete, then the $\FF$-perfect tree forcing satisfies
Property B$(\kappa)$.
\end{corollary}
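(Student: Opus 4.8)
The proof is essentially bookkeeping: I would verify each clause of Definition \ref{def:propertyB} for $\mathbb P(\FF)$ by quoting the lemmas already established, keeping track of where the two hypotheses are spent. For clause (1), take the orders $\le_\zeta$ from Definition \ref{gSacks:def}(2). Each is reflexive and antisymmetric because $\le$ (that is, $\supseteq$) is, and transitive by Observation \ref{obs:basicSacks}(0); when $\zeta=0$ the extra clause ``$s\in q$ for all $s\in p$ with $\otp(\deg_p(s))<0$'' is vacuous, so $\le_0=\le$; and $\xi<\zeta\implies\le_\xi\supseteq\le_\zeta$ is Observation \ref{obs:basicSacks}(2). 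Clause (2), i.e.\ $\le_\delta=\bigcap_{\zeta<\delta}\le_\zeta$ for limit $\delta\in(0,\kappa)$, is also part of Observation \ref{obs:basicSacks}(2).

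For clause (3), the \emph{Fusion} property, I would invoke the Fusion Lemma \ref{Sacks:Fusion}, which treats every limit $\delta\le\kappa$ uniformly and is precisely where $(<\kappa)$-completeness of $\FF$ is used. One checks that a $\delta$-fusion sequence in the sense of Definition \ref{def:propertyB}(3) meets the hypotheses of Lemma \ref{Sacks:Fusion} and that the condition $q=\bigcap_{\zeta<\delta}p_\zeta$ produced there is an upper bound in $\le_\delta$ — the latter because $p_\zeta\le_\zeta q$ for all $\zeta<\delta$ together with clause (2), and for the former one may, if convenient, first pass to a cofinal subsequence via Observation \ref{obs:masteq}(4). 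It is also worth noting that $\mathbb P(\FF)$ is $(<\kappa)$-closed by Lemma \ref{when-closed}, so by Observation \ref{obs:masteq}(3) the cases $\delta<\kappa$ are automatic and the real content of clause (3) is the fusion of length $\kappa$, which is again Lemma \ref{Sacks:Fusion}.

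For clause (4), \emph{Mastering $\kappa$}: by Observation \ref{obs:masteq}(2) it suffices to verify the reformulation for names of ordinals, and that is exactly the Mastering $\kappa$ Lemma \ref{Miller-density}. This is the step consuming $\kappa^{<\kappa}=\kappa$, through the bound $|p|\le\kappa$ which keeps the number of $\le_\zeta$-relevant splitting nodes, and hence the candidate set $X$, of size $\le\kappa$. Combining clauses (1)--(4) gives that $\mathbb P(\FF)$ has Property B$(\kappa)$.

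I do not anticipate a genuine obstacle, as all the work lives in Lemmas \ref{when-closed}, \ref{Sacks:Fusion} and \ref{Miller-density}; the one point needing a little care is aligning the precise shape of a ``$\delta$-fusion sequence'' as in Definition \ref{def:propertyB}(3) with the hypotheses actually used in the proof of Lemma \ref{Sacks:Fusion}, and confirming that the intersection condition is an upper bound in the intended order $\le_\delta$ rather than merely in $\le$.
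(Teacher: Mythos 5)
Your proposal is correct and takes essentially the same route as the paper, whose entire proof is to cite Lemma \ref{Sacks:Fusion} for \emph{Fusion} and Lemma \ref{Miller-density} for \emph{Mastering $\kappa$}; your additional bookkeeping for clauses (1)--(2) and the remark about reconciling the $\le_\zeta$ versus $\le_{\zeta+1}$ indexing in the fusion hypotheses only makes explicit what the paper leaves implicit.
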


\begin{proof}  We obtain {\em Fusion} by Lemma \ref{Sacks:Fusion} and {\em Mastering $\kappa$} by Lemma \ref{Miller-density}.
$\eop_{\ref{Miller-propertyB}}$
\end{proof}

\begin{notation}\label{not:amalgam} For future reference, we shall name the construction used in the proof of Lemma \ref{Miller-density}. Given $p, \zeta$ and $\name{x}$ as in the
beginning of that proof, we shall refer to 
$\{s_\alpha:\,\alpha\le \alpha^\ast\}$ as the $\zeta$-front of $p$ and to any $r$ as constructed there as {\em an ($\zeta+1, \name{x}$)-amalgam of $p$}.
\end{notation}

To conclude, we state the cardinal preservation lemma.

\begin{lemma}\label{Sacks:preservation} Forcing with $\mathbb P(\FF)$ preserves the following cardinalities and cofinalities:
\begin{enumerate}
\item $\le\kappa$,
\item $\le\kappa^+$ if $\kappa^{<\kappa}=\kappa$,
\item $\ge\kappa^{++}$ if $\kappa^{<\kappa}=\kappa$ and $2^\kappa=\kappa^+$.
\end{enumerate}
\end{lemma}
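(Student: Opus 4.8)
The three clauses are proved separately, using the structural results already established.

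For clause (1), I would argue that $\mathbb{P}(\FF)$ preserves all cardinals and cofinalities $\le\kappa$ simply because it is $(<\kappa)$-closed (Lemma \ref{when-closed}): a $(<\kappa)$-closed forcing adds no new sequences of length $<\kappa$, hence it cannot collapse any cardinal $\le\kappa$ nor change any cofinality $\le\kappa$ (if $\lambda\le\kappa$ is regular it stays regular, since a cofinal map of shorter length would be a short new sequence). This is the easy part and needs only a sentence.

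For clause (2), assuming $\kappa^{<\kappa}=\kappa$, the point is to show $\kappa^+$ is preserved; its cofinality being $\kappa^+$ then follows, and $\le\kappa$ is already handled by (1). Now $\kappa^+$ preservation is immediate from Lemma \ref{preservation-kappa-plus} together with Corollary \ref{Miller-propertyB}: under $\kappa^{<\kappa}=\kappa$ the forcing has Property B$(\kappa)$, and Property B$(\kappa)$ forcing preserves $\kappa^+$. Alternatively one can give the $\kappa^+$-cc-style argument directly: an antichain-free nice-name counting using $|p|\le\kappa$ and $\kappa^{<\kappa}=\kappa$ shows $\mathbb{P}(\FF)$ has size $2^\kappa$, which by itself is not enough, so the cleaner route is genuinely via Property B$(\kappa)$ and the fusion/amalgam machinery — I would just cite Lemma \ref{preservation-kappa-plus}.

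For clause (3), assuming in addition $2^\kappa=\kappa^+$, I must show every cardinal (and cofinality) $\ge\kappa^{++}$ is preserved; equivalently, that $\mathbb{P}(\FF)$ satisfies the $\kappa^{++}$-chain condition. Here I would count conditions: each condition is a subtree of ${}^{<\kappa}\kappa$ of size $\le\kappa$, and the number of subsets of ${}^{<\kappa}\kappa$ of size $\le\kappa$ is $(\kappa^{<\kappa})^\kappa = \kappa^\kappa = 2^\kappa = \kappa^+$. Hence $|\mathbb{P}(\FF)|\le\kappa^+ < \kappa^{++}$, so trivially $\mathbb{P}(\FF)$ has the $\kappa^{++}$-cc, and therefore preserves all cardinals and cofinalities $\ge\kappa^{++}$. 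The main (and essentially only) obstacle is bookkeeping the cardinal arithmetic correctly — in particular being careful that $|{}^{<\kappa}\kappa| = \kappa^{<\kappa} = \kappa$ so that the count of size-$\le\kappa$ subtrees is exactly $\kappa^\kappa = 2^\kappa = \kappa^+$, and that the order relation ($\supseteq$) does not matter for the chain-condition bound. I would present clause (3) in one short paragraph via this size computation, and remark that clauses (1)–(3) together leave only the behaviour at $\kappa^+$ itself resolved by clause (2), so the forcing collapses nothing except possibly adds subsets of $\kappa$ as intended.
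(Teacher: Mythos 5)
Your proposal is correct and follows essentially the same route as the paper: clause (1) from the $(<\kappa)$-closure of Lemma \ref{when-closed}, clause (2) from Corollary \ref{Miller-propertyB} together with Lemma \ref{preservation-kappa-plus}, and clause (3) from the size count $|\mathbb P(\FF)|\le 2^\kappa=\kappa^+$ giving the $\kappa^{++}$-cc. The only difference is that you spell out the cardinal arithmetic for (3) a bit more explicitly than the paper does, which is harmless.
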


\begin{proof} For cardinalities and cofinalities $\le\kappa$, the conclusion follows by the $(<\kappa)$-closure of $\mathbb P$.

Suppose that $\kappa^{<\kappa}=\kappa$. By Corollary \ref{Miller-propertyB}, we have that $\mathbb P(\FF)$ satisfies Property B$(\kappa)$, 
and hence Lemma \ref{preservation-kappa-plus} shows that $\kappa^+$ is preserved.

Supposing $\kappa^{<\kappa}=\kappa$, the size of $\mathbb P$ is $2^\kappa$, so if $2^\kappa=\kappa^+$ then the forcing has
$\kappa^{++}$-cc and so preserves cardinals $\ge\kappa^{++}$.
$\eop_{\ref{Sacks:preservation}}$
\end{proof}

Brown and Groszek mention that the requirement that $\FF$ is $(<\kappa)$-complete is necessary for the 
$(<\kappa)$-closure of $\mathbb P(\FF)$. The following example shows why.

\begin{example}\label{not-always-closed} If $\kappa>\aleph_0$, and the forcing $\mathbb P$ is $(<\kappa)$-closed, then $\FF$ must be $(<\kappa)$-complete.
\end{example}

\begin{proof} If $\kappa=\aleph_0$, then every forcing is $(<\kappa)$-closed.

Suppose that $\kappa>\aleph_0$, $\lambda<\kappa$ and there is a sequence $\langle A_i:\;i<\lambda\rangle$ of sets in $\FF$ such that $\bigcap_{i<\lambda}
A_i\notin \FF$. By redefining $A'_i=A_i\setminus \bigcap_{i<\lambda}
A_i$, we may without loss of generality assume that $\bigcap_{i<\lambda} A_i =\emptyset$.
We now can exhibit an increasing sequence $\langle p_i:\,i<\lambda\rangle$ in $\mathbb P$ such that $\bigcap_{i<\lambda} p_i=\emptyset$.
Namely, let $p_i$ be the unique condition in $\mathbb P$ in which every node has splittings in $\bigcup_{j\ge i} A_j$. 
$\eop_{\ref{not-always-closed}}$
\end{proof}

\subsection{Baumgartner's (Prikry-Silver) Subset of $\kappa$}\label{subsec:R(kappa)}
A Silver real is added by conditions that are infinite co-infinite functions from $\omega$ to $2$ and the order is $p\le q\iff p\subseteq q$. Various references state that this forcing satisfies Axiom A, but we have not been able to find a correct proof in any of them. In this section we shall
consider a version of this forcing for higher cardinals. The proof that Silver's forcing satisfies Axiom A will be given as a special case of
Lemma \ref{lemma-Fusion-R}. See \S\ref{sec:history} for the name and the history of this forcing notion.

Moving from $\kappa=\aleph_0$ to an uncountable regular cardinal $\kappa$, it is natural to try to generalise Silver's forcing by replacing its defining property by requiring that the domain of a partial functions from $\kappa$ to 2 be missing a club 
$\kappa$, so the domain of the limit of a fusion sequence $\langle p_\zeta:\,\zeta<\kappa\rangle$ will still be able to omit a club, namely the diagonal intersection of the clubs omitted by the individual $p_\zeta$'s. In \cite[pg. 428]{zbMATH03529856} Baumgartner introduced the product of $\lambda$ many
such forcings and proved its cardinal preservation properties. His arguments contained mistakes, see \S\ref{sec:history} for details. We shall review these two forcings, here for the one step and in \S\ref{subsec:R(k,l)} for the product. We'll extract proofs from Baumgartner's proofs and rephrase them in the terms used here, with the problems in his {\em Fusion} and {\em Mastering $\kappa$} arguments pointed out and corrected.

Here we present the case of the one subset of $\kappa$ and then build to the product with more or less 
the same arguments in \S\ref{subsec:R(k,l)}. 

\begin{remark}\label{rem:clubs} For the purposes of this section, we shall use the terminology for {\em club} subsets of a regular cardinal, including 
$\aleph_0$. In the latter case, a club subset of $\omega$ is simply any one of its infinite subsets.
\end{remark}

\begin{definition}\label{def:Baum-kappa} Let $\kappa$ be a regular cardinal. The forcing $\mathbb R(\kappa)$ consists of all partial functions
$p:\,\kappa\to 2$ such that $\dom(p)$ is disjoint from a club subset of $\kappa$. The ordering is given by $p\le q$ iff $p\subseteq q$.
\end{definition}

Note that the definition of $\mathbb R(\kappa)$ implies that for any condition $p$, we have  $|\kappa\setminus \dom(p)|=\kappa$. Hence we are 
justified in making the following definition.

\begin{definition}\label{def:le_zeta_R} Suppose that $p\le q\in  \mathbb R(\kappa)$ and $\zeta<\kappa$. Let $\kappa\setminus \dom(p)$ be
enumerated increasingly as $\langle \alpha_\zeta:\,\zeta<\kappa\rangle$. We say that $p\le_\zeta q$ if in addition to $p\le q$ we have
\[
\{\alpha_\xi:\,\xi <\zeta\}\subseteq \kappa\setminus \dom(p).
\]
\end{definition}

\begin{observation}\label{obs:generic} The generic object added by $\mathbb R(\kappa)$ is the characteristic function an unbounded
co-unbounded subset of $\kappa$ given by $g=\bigcup G$ for $G$ the generic filter.
\end{observation}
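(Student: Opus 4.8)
The plan is to deduce the statement from three routine density arguments carried out in the generic extension $\mathbf V[G]$. Set $g=\bigcup G$ and $A=\{\alpha<\kappa:\,g(\alpha)=1\}$. First I would check that $g$ is a function: since $G$ is a filter, any two $p,q\in G$ have a common extension $r\in G$, and because the order of $\mathbb R(\kappa)$ is $\subseteq$ we get $p,q\subseteq r$, so $p$ and $q$ agree on $\dom(p)\cap\dom(q)$; hence $g$ is a partial function from $\kappa$ to $2$, that is, the characteristic function of $A$ on its domain. It then remains to prove that $\dom(g)=\kappa$ and that both $A$ and its complement are unbounded in $\kappa$.

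The one point requiring a little care is the following auxiliary fact, which I would isolate first: if $p\in\mathbb R(\kappa)$, $\alpha\in\kappa\setminus\dom(p)$ and $\varepsilon\in 2$, then $q=p\cup\{(\alpha,\varepsilon)\}$ is a condition with $q\ge p$. Indeed, if $C$ is a club disjoint from $\dom(p)$, then $C\cap(\alpha,\kappa)$ is still a club (the intersection of a club with an end segment of $\kappa$) and it is disjoint from $\dom(q)=\dom(p)\cup\{\alpha\}$. This club-shrinking step, together with the appeal to regularity of $\kappa$ below, is the only thing to keep honest; there is no genuine obstacle.

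Now fix $\beta<\kappa$ and consider $D_\beta=\{p:\,\beta\in\dom(p)\}$, $E^1_\beta=\{p:\,(\exists\alpha\ge\beta)\ \alpha\in\dom(p)\wedge p(\alpha)=1\}$, and $E^0_\beta$ defined the same way with value $0$. Given any $p$: as noted immediately after Definition \ref{def:Baum-kappa} we have $|\kappa\setminus\dom(p)|=\kappa$, and since $\kappa$ is regular this set is unbounded in $\kappa$, so there is $\alpha\ge\beta$ with $\alpha\notin\dom(p)$ (for $D_\beta$, take $\alpha=\beta$ if $\beta\notin\dom(p)$, otherwise $p\in D_\beta$ already). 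By the auxiliary fact $p\cup\{(\alpha,\varepsilon)\}$ is a condition above $p$ in the relevant set, so all three families are dense. By genericity $G$ meets every $D_\beta$, whence $\dom(g)=\kappa$; and $G$ meets every $E^1_\beta$ and every $E^0_\beta$, so $A$ and $\kappa\setminus A=\{\alpha<\kappa:\,g(\alpha)=0\}$ are both unbounded in $\kappa$. Thus $g$ is the characteristic function of the unbounded, co-unbounded set $A$, as claimed.
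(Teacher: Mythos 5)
Your proof is correct, and it is exactly the routine argument that the paper leaves implicit: Observation \ref{obs:generic} is stated without proof, and the intended justification is precisely the compatibility-of-filter-elements argument plus density of the sets forcing $\beta$ into the domain and forcing values $0$ and $1$ above any $\beta$, using that $\kappa\setminus\dom(p)$ is unbounded and that adding one point to $\dom(p)$ still leaves a club (respectively, under Remark \ref{rem:clubs}, an infinite set when $\kappa=\aleph_0$) disjoint from the domain. Nothing further is needed.
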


\begin{lemma}\label{R:closure} $\mathbb R(\kappa)$ is $(<\kappa)$-closed, and in fact every increasing sequence 
$\bar{p}=\langle p_i:\,i<i^\ast\rangle$ for any $i^\ast<\kappa$ has the least upper bound which is $\bigcup_{i<i^\ast} p_i$.
\end{lemma}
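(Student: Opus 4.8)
The plan is to verify directly that $q = \bigcup_{i<i^\ast} p_i$ is a condition in $\mathbb{R}(\kappa)$ and is the least upper bound of the increasing sequence $\bar{p}$. The two things to check are: (a) $q$ is a partial function from $\kappa$ to $2$, and (b) $\dom(q)$ is disjoint from a club subset of $\kappa$. For (a), since the $p_i$ are increasing under $\subseteq$, their union is a function, so $q$ is a well-defined partial function $\kappa \to 2$. Once we know $q \in \mathbb{R}(\kappa)$, it is immediate that $q \geq p_i$ for each $i$ (as $p_i \subseteq q$), and that any upper bound $r$ of the sequence must satisfy $p_i \subseteq r$ for all $i$, hence $q = \bigcup_i p_i \subseteq r$, i.e. $q \leq r$; so $q$ is the least upper bound.

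So the only real content is (b): showing $\dom(q) = \bigcup_{i<i^\ast}\dom(p_i)$ is disjoint from a club. For each $i < i^\ast$, let $C_i$ be a club with $C_i \cap \dom(p_i) = \emptyset$. Since $i^\ast < \kappa$ and $\kappa$ is regular, the intersection $C = \bigcap_{i<i^\ast} C_i$ is a club subset of $\kappa$ (an intersection of fewer than $\kappa$ many clubs). Then $C \cap \dom(q) = C \cap \bigcup_i \dom(p_i) = \bigcup_i (C \cap \dom(p_i)) \subseteq \bigcup_i (C_i \cap \dom(p_i)) = \emptyset$. Hence $\dom(q)$ is disjoint from the club $C$, so $q \in \mathbb{R}(\kappa)$.

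I would also note the boundary case $\kappa = \aleph_0$ separately in one line, recalling Remark \ref{rem:clubs}: a ``club'' subset of $\omega$ is just an infinite subset, and a finite intersection of infinite subsets need not be infinite, so the argument above breaks. However, for $\kappa = \aleph_0$ every $p_i$ is a finite partial function (being disjoint from a cofinite set means having finite domain), so $i^\ast < \omega$ and $q = p_{i^\ast - 1}$ is simply the last term of a finite increasing sequence, which is trivially the least upper bound and lies in $\mathbb{R}(\aleph_0)$.

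The main obstacle, such as it is, is purely the club-intersection step, and it is not really an obstacle: it rests only on regularity of $\kappa$ guaranteeing that fewer-than-$\kappa$ many clubs intersect to a club. There is no subtlety about the orders $\leq_\zeta$ here — the lemma statement only concerns $\leq = \leq_0$ — so nothing needs to be said about the enumeration of $\kappa \setminus \dom(q)$ or about fusion; that is deferred to the later \emph{Fusion} lemma for $\mathbb{R}(\kappa)$. I expect the write-up to be three or four sentences.
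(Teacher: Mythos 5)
Your proof is correct and takes essentially the same route as the paper's: intersect the clubs $C_i$ avoiding the $\dom(p_i)$ (a $<\kappa$-intersection of clubs is a club by regularity) to see that $\bigcup_{i<i^\ast}p_i$ is a condition, and treat $\kappa=\aleph_0$ separately using the finiteness of $i^\ast$. One parenthetical aside is inaccurate — conditions in $\mathbb{R}(\aleph_0)$ need not have finite domain (their domain is merely co-infinite, since a ``club'' in $\omega$ is just an infinite set) — but this does not affect your argument, because for finite $i^\ast$ the union is simply the last term of the sequence, which is already a condition.
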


\begin{proof} If $\kappa=\aleph_0$, this is trivially true since $i^\ast$ is finite. So, suppose $\kappa>\aleph_0$ and let $C_i$ be a club of $\kappa$ contained in $\kappa\setminus \dom(p_i)$. Hence $\bigcap_{i<i^\ast}C_i$ is a club contained in $\kappa\setminus \bigcup_{i<i^\ast} \dom(p_i)$ and therefore 
$\bigcup_{i<i^\ast} p_i$ is a condition. Clearly, it is the least upper bound of $\bar{p}$.
$\eop_{\ref{R:closure}}$
\end{proof}

The following observation is obvious, but we state it as is it will be used in the proof of the {\em Mastering $\kappa$} Lemma.

\begin{observation}\label{obvious} If $p\in \mathbb R(\kappa)$ and $r\subseteq p$, then $r\in \mathbb R(\kappa)$.
\end{observation}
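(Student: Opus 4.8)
The statement to prove, Observation~\ref{obvious}, is that if $p\in\mathbb R(\kappa)$ and $r\subseteq p$ then $r\in\mathbb R(\kappa)$. The plan is to check directly against Definition~\ref{def:Baum-kappa}: an element of $\mathbb R(\kappa)$ is a partial function $\kappa\to 2$ whose domain is disjoint from some club of $\kappa$. First I would note that since $r\subseteq p$ as sets of ordered pairs, $r$ is itself a partial function $\kappa\to 2$ (a subset of a function is a function, and its domain is a subset of $\kappa$), so the only thing at issue is the ``large complement'' / club-avoidance clause. Let $C$ be a club of $\kappa$ witnessing $p\in\mathbb R(\kappa)$, i.e.\ $C\cap\dom(p)=\emptyset$. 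Since $\dom(r)\subseteq\dom(p)$, the same $C$ satisfies $C\cap\dom(r)=\emptyset$, so $C$ witnesses $r\in\mathbb R(\kappa)$.

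So the proof is essentially a one-liner: the \emph{same} club that works for $p$ works for $r$, because shrinking the function only shrinks the domain, and avoiding a set is inherited by subsets. There is genuinely no obstacle here; the observation is stated precisely because it is used as a bookkeeping fact in the proof of the \emph{Mastering $\kappa$} Lemma (where one forms sub-conditions $r\subseteq p$ obtained by restricting $p$ to a subset of its domain, and needs to know these are still legitimate conditions).

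If one wanted to be slightly more careful about the degenerate reading of ``club'' in the case $\kappa=\aleph_0$ (per Remark~\ref{rem:clubs}, where a club of $\omega$ is just an infinite subset), the argument is unchanged: an infinite subset $C\subseteq\omega$ disjoint from $\dom(p)$ is still infinite and still disjoint from $\dom(r)\subseteq\dom(p)$. Hence in all cases $r\in\mathbb R(\kappa)$, and I would simply write this out in two sentences without further ado.
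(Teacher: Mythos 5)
Your proof is correct and is exactly the intended argument: the paper itself states this observation without proof (calling it obvious), and the one-line justification that the club witnessing $p\in\mathbb R(\kappa)$ also avoids $\dom(r)\subseteq\dom(p)$ is all that is needed.
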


We shall now start checking that $\mathbb R(\kappa)$ with the orders defined in Definition \ref{def:le_zeta_R} satisfies property $B(\kappa)$. It will turn out that
the first three properties are true for any regular $\kappa$ (Lemma \ref{lemma-Fusion-R}), but for the {\em Mastering $\kappa$} property in the case of $\kappa>\aleph_0$ we
shall have to make additional assumptions. See Lemma \ref{lemma-mastering-R}.

\begin{lemma}[{\bf Fusion Lemma}]\label{lemma-Fusion-R} $\mathbb R(\kappa)$ satisfies properties (1)-(3) of Definition \ref{def:propertyB}; and in particular it satisfies {\em Fusion}.
\end{lemma}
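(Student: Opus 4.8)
The plan is to verify the three conditions of Definition~\ref{def:propertyB} directly from Definitions~\ref{def:Baum-kappa} and~\ref{def:le_zeta_R}. Condition (1) asks that the $\le_\zeta$ form a decreasing (in $\zeta$) sequence of partial orders with $\le_0=\le$. Transitivity and reflexivity of each $\le_\zeta$ follow at once: if $p\le_\zeta q\le_\zeta r$ then $p\subseteq q\subseteq r$, and the first $\zeta$ elements of $\kappa\setminus\dom(p)$ are omitted from $\dom(q)$, hence a fortiori from $\dom(r)$ since $\dom(q)\subseteq\dom(r)$; so $p\le_\zeta r$. That $\xi<\zeta\implies\ \le_\xi\supseteq\ \le_\zeta$ is immediate because $\{\alpha_\eta:\eta<\xi\}\subseteq\{\alpha_\eta:\eta<\zeta\}$. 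And $\le_0=\le$ because the extra clause is vacuous for $\zeta=0$.

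For condition (2), the limit case, I would show that for $\delta$ limit in $(0,\kappa)$ we have $p\le_\delta q$ iff $p\le_\zeta q$ for all $\zeta<\delta$. The forward direction is contained in condition~(1). Conversely, if $p\le q$ and for every $\zeta<\delta$ the set $\{\alpha_\xi:\xi<\zeta\}$ is disjoint from $\dom(q)$, then taking the union over $\zeta<\delta$ gives $\{\alpha_\xi:\xi<\delta\}$ disjoint from $\dom(q)$, which is exactly $p\le_\delta q$. So $\le_\delta=\bigcap_{\zeta<\delta}\le_\zeta$.

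The substantive part is \emph{Fusion}. Let $\delta\le\kappa$ be a limit ordinal and let $\langle p_\zeta:\zeta<\delta\rangle$ be a $\delta$-fusion sequence: $p_\zeta\le_\zeta p_{\zeta+1}$ for all $\zeta$, and $p_\zeta\le p_\varepsilon$ for limit $\varepsilon<\delta$ and $\zeta<\varepsilon$. The candidate upper bound is $q=\bigcup_{\zeta<\delta}p_\zeta$. First, $q$ is a partial function: the $p_\zeta$ are $\subseteq$-increasing (at successor steps by $\le_{\zeta}\subseteq\le$, at limit steps by hypothesis), so the union is coherent. Next I must check $q\in\mathbb R(\kappa)$, i.e.\ that $\kappa\setminus\dom(q)$ contains a club. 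Here I use the key point, analogous to the diagonal-intersection idea flagged in the prose before Definition~\ref{def:Baum-kappa}: enumerate $\kappa\setminus\dom(p_0)$ increasingly as $\langle\alpha_\xi:\xi<\kappa\rangle$; the defining clause of $p_0\le_\zeta p_{\zeta+1}$ guarantees $\{\alpha_\xi:\xi<\zeta\}\cap\dom(p_{\zeta+1})=\emptyset$, and since the tail $\langle p_\eta:\eta\ge\zeta+1\rangle$ only extends $p_{\zeta+1}$ on coordinates among $\{\alpha_\xi:\xi\ge\zeta\}$ (as each further step respects $\le_{\eta}$ with $\eta\ge\zeta$), the ordinal $\alpha_\xi$ stays out of $\dom(q)$ for every $\xi<\delta$. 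Hence $\{\alpha_\xi:\xi<\delta\}\subseteq\kappa\setminus\dom(q)$. When $\delta<\kappa$ this set has size $<\kappa$, so I instead note $q\subseteq p_0$ restricted suitably and invoke Observation~\ref{obvious} together with the fact that $\kappa\setminus\dom(p_0)$ itself contains a club disjoint from $\dom(q)$; more cleanly, $\dom(q)\subseteq\dom(p_0)\cup\{\alpha_\xi:\xi\ge\delta\}$ has complement containing $\{\alpha_\xi:\xi<\delta\}\cup(\text{a club}\setminus\dom(p_0))$, wait --- rather, for $\delta=\kappa$ the set $\{\alpha_\xi:\xi<\kappa\}$ is itself club in $\kappa$ (as an increasing continuous enumeration of a subset, it is unbounded, and at limits $\varepsilon<\kappa$ the hypothesis $p_\zeta\le p_\varepsilon$ plus $p_\varepsilon\le_\varepsilon p_{\varepsilon+1}$ forces $\alpha_\varepsilon\notin\dom(q)$), giving a club disjoint from $\dom(q)$; for $\delta<\kappa$ one checks $q\subseteq p_0$ fails in general, so instead observe $q$ is a condition because it differs from $p_0$ only on coordinates in $\{\alpha_\xi:\xi<\delta\}$ of size $<\kappa$, and $(\kappa\setminus\dom(p_0))\setminus\{\alpha_\xi:\xi<\delta\}$ still contains a club. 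Either way $q\in\mathbb R(\kappa)$. Finally $p_\zeta\le_\zeta q$: we have $p_\zeta\subseteq q$, and $\{\alpha^{(\zeta)}_\xi:\xi<\zeta\}$ --- the first $\zeta$ elements of $\kappa\setminus\dom(p_\zeta)$ --- coincides with $\{\alpha_\xi:\eta_\zeta\le\xi<\eta_\zeta+\zeta\}$ for appropriate offset, and by the same tail-stability argument these remain outside $\dom(q)$.

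The main obstacle is the bookkeeping in the $\delta<\kappa$ versus $\delta=\kappa$ split for showing $q\in\mathbb R(\kappa)$: one must track precisely which coordinates can enter $\dom(q)$ at later stages, using that each step $p_\zeta\le_\zeta p_{\zeta+1}$ only ``unlocks'' coordinates of index $\ge\zeta$ in the enumeration relative to $p_0$, so that the first $\delta$ coordinates originally omitted by $p_0$ (or a club refining the situation) stay omitted throughout. Once that is pinned down, the verification that $q$ is the least upper bound and that $p_\zeta\le_\zeta q$ is routine, exactly as in Lemma~\ref{R:closure}. Note that conditions (1)--(3) hold for every regular $\kappa$ with no extra hypothesis, the delicate $\kappa^{<\kappa}=\kappa$-type assumption being needed only later for \emph{Mastering $\kappa$} (Lemma~\ref{lemma-mastering-R}).
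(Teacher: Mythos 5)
There is a genuine gap in the \emph{Fusion} part: your protection bookkeeping is carried out relative to the wrong condition. You enumerate $\kappa\setminus\dom(p_0)$ as $\langle\alpha_\xi:\xi<\kappa\rangle$ and invoke ``the defining clause of $p_0\le_\zeta p_{\zeta+1}$'' to get $\{\alpha_\xi:\xi<\zeta\}\cap\dom(p_{\zeta+1})=\emptyset$. But a fusion sequence only gives $p_\zeta\le_\zeta p_{\zeta+1}$, and since the orders become finer as $\zeta$ grows, chaining these yields only $p_0\le_0 p_{\zeta+1}$ --- no protection at all relative to $p_0$'s enumeration. In Definition \ref{def:le_zeta_R} the protected set at stage $\zeta$ is the first $\zeta$ elements of $\kappa\setminus\dom(p_\zeta)$, not of $\kappa\setminus\dom(p_0)$. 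Concretely, since $\le_0$ is just $\le$, the condition $p_1$ may already put $\alpha_0$ into its domain, so your conclusion $\{\alpha_\xi:\xi<\delta\}\subseteq\kappa\setminus\dom(q)$ fails; for $\delta=\kappa$ it would even assert $\dom(q)\subseteq\dom(p_0)$, i.e.\ that the sequence never adds a single new coordinate. The same misreading undermines your $\delta<\kappa$ fix (``$q$ differs from $p_0$ only on coordinates in $\{\alpha_\xi:\xi<\delta\}$'' is false: $p_1$ may add $\alpha_\xi$ for arbitrarily large $\xi$), your claim that the first $\zeta$ elements of $\kappa\setminus\dom(p_\zeta)$ form a contiguous block of the $p_0$-enumeration, and the claim that $\kappa\setminus\dom(p_0)$ is itself club (it contains a club but need not be closed).

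What is actually needed, and what the paper does, is different. For $\delta<\kappa$ the union is a condition simply by Lemma \ref{R:closure} (intersect the $<\kappa$ many omitted clubs); no further analysis is required. For $\delta=\kappa$ one shows that the first $\zeta$ elements of $\kappa\setminus\dom(p_\zeta)$ stay out of all later domains: this propagates through successor stages because those elements remain an initial segment of the later complements, and through limit stages only after normalising the sequence so that $p_\varepsilon=\bigcup_{\xi<\varepsilon}p_\xi$ (your argument never addresses limit stages of the sequence at all). Then, for $\kappa>\aleph_0$, one chooses decreasing clubs $C_\zeta\subseteq\kappa\setminus\dom(p_\zeta)$ and proves that the diagonal intersection $\Delta_{\zeta<\kappa}C_\zeta$ is a club disjoint from $\dom(q)$, via a least-counterexample argument. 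That diagonal-intersection step is exactly the point the paper identifies as the flaw in Baumgartner's original proof; by pinning the enumeration to $p_0$ your proposal bypasses it, and the resulting argument does not go through.
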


\begin{proof} Properties (1) and (2) are clearly satisfied. To check {\em Fusion} suppose that $\delta\le \kappa$ is a limit ordinal 
and $\bar{p}$ a $\delta$-fusion sequence. If $\delta=0$, the claim is vacuously true, so assume that $\delta>0$.
Let $p=\bigcup_{\zeta<\delta} p_\zeta$, so by the definition of a fusion sequence, this is a well defined partial function from $\kappa$ to 2. If $\delta<\kappa$,
we are done by Lemma \ref{R:closure}. In the remaining case of $\delta=\kappa$,
it remains
to check that $\kappa\setminus \dom(p)$ contains a club. This is the point in Baumgartner's proof \cite[pg. 428]{zbMATH03529856} that is incorrect. 

For $\kappa=\aleph_0$ we know that the first $n$ elements of $\omega\setminus \dom(p_n)$ are
in $\omega\setminus \dom(p_m)$ for any $m\ge n$, by the definition of $\le_n$ and $\le_m$.  Hence these elements are in $\omega\setminus \dom(p)$
and in particular, the order type of $\omega\setminus \dom(p)$ is at least $n$. As $n$
was arbitrary, this shows that $\omega\setminus \dom(p)$ is infinite, as required.

Now assume $\kappa>\aleph_0$ We can without loss of generality change the sequence so that at any limit $\zeta<\kappa$ we have that $p_\zeta=\bigcup_{\xi<\zeta} p_\xi$, since at any rate the set $p$ will remain the same. Also without loss of generality, we may assume that the elements of $\bar{p}$ are distinct, so in particular
$\langle \dom(p_\zeta):\,\zeta<\kappa\rangle$ is a strictly increasing sequence of sets.

For each $\zeta<\kappa$, let $C_\zeta$ be a club of $\kappa$ avoided by $\dom(p_\zeta)$.
We can assume that we have chosen the clubs $C_\zeta$ for $\zeta<\kappa$ so that
$\xi<\zeta\implies C_\xi\supseteq C_\zeta$. Let $C=\Delta_{\zeta<\kappa}
C_\zeta$, so still a club of $\kappa$. We claim that $C\subseteq \kappa\setminus \dom(p)$. Suppose, for a contradiction that this is not the case and let 
$\varepsilon<\kappa$ be the first such that $C\cap\dom(p_\varepsilon)\neq \emptyset$. By our assumption that at limit ordinals we have $p_\zeta=\bigcup_{\xi<\zeta} p_\xi$, we can conclude that $\varepsilon$ is a successor ordinal, say $\varepsilon=\zeta+1$. Let $\alpha=\min (C\cap 
\dom(p_{\zeta+1}))$. Therefore $\alpha\notin C_{\zeta+1}\subseteq C_\zeta$. Since $C\setminus \zeta\subseteq C_\zeta$, we must have $\alpha<\zeta$.
So $\alpha$ is among the first $\zeta$ elements of $\kappa\setminus \dom(p_\zeta)$ and by the definition of $\le_\zeta$,  we know that $\alpha$ is outside of
$\dom(p_\xi)$ for any $\xi\ge\zeta$ and hence outside of $\dom(p)$. A contradiction.
$\eop_{\ref{lemma-Fusion-R}}$
\end{proof}

\begin{lemma}[{\bf Mastering $\kappa$ Lemma}]\label{lemma-mastering-R} Suppose that 
\begin{enumerate}
\item $\kappa$ is strongly inaccessible, including $\kappa=\aleph_0$ or
\item $\diamondsuit_\kappa$ holds.
\end{enumerate}
Then ${\mathbb R}(\kappa)$ has {\em Mastering $\kappa$}.
\end{lemma}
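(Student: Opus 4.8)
The plan is to verify \emph{Mastering $\kappa$} for $\mathbb R(\kappa)$ in the two cases separately, reducing in both cases to the rephrasing from Observation \ref{obs:masteq}(2): given $p\forces``\name{x}\text{ is an ordinal}"$ and $\zeta<\kappa$, produce $q\ge_\zeta p$ and $X\in\mathbf V$ with $|X|\le\kappa$ and $q\forces``\name{x}\in X"$. For $\kappa=\aleph_0$ this is just the standard Axiom A argument for Silver forcing: since $\le_n$ only protects finitely many points of $\omega\setminus\dom(p)$, above $p$ there are only countably many conditions extending $p$ on the first $n$ ``protected'' gaps, and one fuses finitely-many-branching decisions. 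Phrased uniformly: the first case is the special instance $\zeta<\kappa$, $\kappa$ inaccessible, where the ``protected'' part of the complement has size $|\{\alpha_\xi:\xi<\zeta\}|<\kappa$ and the number of conditions obtained by filling in a bounded initial segment of the gap-set is $\le 2^{<\kappa}=\kappa$ by inaccessibility. So the first step is to make this counting precise.

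For the inaccessible case I would mimic the amalgam construction of Lemma \ref{Miller-density}. Fix $p$ and $\zeta<\kappa$; let $\langle\alpha_\xi:\xi<\kappa\rangle$ enumerate $\kappa\setminus\dom(p)$ increasingly, and let $\beta=\alpha_\zeta$, so $\dom(p)\subseteq\beta\cup(\kappa\setminus\{\alpha_\xi:\xi<\zeta\})$ and the ``free'' coordinates below $\beta$ that we are allowed to fill are $\{\alpha_\xi:\xi<\zeta\}$, a set of size $|\zeta|<\kappa$. For each function $e:\{\alpha_\xi:\xi<\zeta\}\to 2$, the condition $p\cup e$ is still in $\mathbb R(\kappa)$ (it is a subset-modification of $p$ on $<\kappa$ coordinates, so the complement still contains a club, by Observation \ref{obvious}-style reasoning or since $\kappa\setminus\dom(p\cup e)\supseteq C_p\setminus\beta$); pick $r_e\ge p\cup e$ deciding $\name{x}$ to be some ordinal $\gamma_e$. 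There are at most $2^{|\zeta|}\le 2^{<\kappa}=\kappa$ such $e$. Now I need an upper bound $q\ge_\zeta p$ of a carefully chosen ``diagonal'' of the $r_e$'s — and here one cannot simply take the union of all $r_e$ since two of them decide $\name{x}$ differently and are incompatible. The correct move is the Silver-style amalgam: build $q$ to contain, for each $e$, only the part of $r_e$ that lies on coordinates $\ge\beta$, glued along the common restriction $p\rest\beta$; that is, $q$ agrees with $p$ on $\dom(p)\cap\beta$, is undefined on $\{\alpha_\xi:\xi<\zeta\}$ (this is exactly what $\ge_\zeta$ demands), and above $\beta$ is the union of all the $r_e\rest[\beta,\kappa)$. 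One must check this union is a function (the $r_e$'s may clash above $\beta$ too) — so in fact one should first thin the $r_e$'s so that they all agree above $\beta$ except on a set one keeps track of, or, more cleanly, enumerate the $e$'s and recursively choose $r_e$ extending the previous common part; since there are $\le\kappa$ of them and $\mathbb R(\kappa)$ is $(<\kappa)$-closed with unions as suprema (Lemma \ref{R:closure}), and a union of $\le\kappa$ many nested conditions whose complements contain a fixed club still has complement containing that club, the bookkeeping goes through. The output $X=\{\gamma_e:e\}$ has size $\le\kappa$ and $q\forces``\name{x}\in X"$ since any $q'\ge q$ deciding $\name{x}$ is compatible with some $p\cup e$, hence with some $r_e$.

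For the $\diamondsuit_\kappa$ case the counting above fails because when $\zeta$ is itself large the set $\{\alpha_\xi:\xi<\zeta\}$ can have size $\kappa$ and $2^\kappa>\kappa$ in general; here one uses the diamond sequence to guess, along a club of stages, an initial segment of a name for $\name{x}$ together with a ``local'' fusion condition, exactly as in Baumgartner's corrected argument for the product (the same device used in \S\ref{subsec:R(k,l)}). Concretely I would fix a $\diamondsuit_\kappa$-sequence $\langle S_\alpha:\alpha<\kappa\rangle$, build by recursion a $\le_\zeta$-increasing sequence $\langle p_\alpha:\alpha<\kappa\rangle$ with $p_0=p$ above level/stage $\zeta$, and a set $X$ of ordinals, so that at stage $\alpha$ in a suitable club $E$, if $S_\alpha$ codes a condition extending $p_\alpha$ together with a decision for $\name{x}$, we honour that decision and throw the value into $X$; fusing along $E$ gives $q$, and a density/genericity argument using that $\diamondsuit_\kappa$ guesses correctly on a stationary set shows every $q'\ge q$ deciding $\name{x}$ was anticipated, so $q\forces``\name{x}\in X"$ with $|X|\le\kappa$.

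The main obstacle is the gluing step in the inaccessible case: unlike the $\FF$-perfect forcing, where distinct extensions live on disjoint subtrees above distinct immediate successors and amalgamate automatically, here the $\le\kappa$ many conditions $r_e$ share the coordinates above $\beta$ and generically conflict there, so one must either carefully nest them (using $(<\kappa)$-closure and the fact that a $\kappa$-union of conditions with a common avoided club is still a condition) or restructure the argument so that the ``decision'' is read off on coordinates that are kept disjoint. Getting this bookkeeping right — and in particular checking that the resulting $q$ still avoids a club and satisfies $q\ge_\zeta p$ — is exactly the place where Baumgartner's original argument erred, so it deserves the most care.
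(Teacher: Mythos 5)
Your proposal is correct in outline and follows essentially the same route as the paper: in the inaccessible case you amalgamate, over all possible fillings of the protected initial part, recursively nested extensions deciding $\name{x}$, gluing their common tail above the cutoff back onto $p$'s initial segment (the paper does exactly this, enumerating all of ${}^{\gamma}2$ below the cutoff $\gamma$ rather than only the functions on the $\zeta$ protected gaps -- an inessential variant), and in the diamond case you run Baumgartner's guess-along-a-club fusion, which is the paper's argument for case (2).

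One point needs tightening in your inaccessible case. You bound the number of fillings $e$ by $2^{|\zeta|}\le 2^{<\kappa}=\kappa$ and then justify the final gluing by saying that a union of $\le\kappa$ many nested conditions ``whose complements contain a fixed club'' is a condition. The antecedent is not available: the conditions $r_e$ are chosen freely to decide $\name{x}$, so there is no common avoided club, and a nested union of $\kappa$ many conditions in $\mathbb R(\kappa)$ need not be a condition at all -- that failure is exactly what \emph{Fusion} exists to circumvent and where Baumgartner's original argument went astray. The repair is immediate from your own hypothesis: strong inaccessibility gives $2^{|\zeta|}<\kappa$, so the recursion has length $<\kappa$ and Lemma \ref{R:closure} (together with Observation \ref{obvious} for the tails) alone yields that the nested union is a condition; the ``fixed club'' clause should simply be dropped. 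With that correction the amalgam, the verification that $q\ge_\zeta p$, and the compatibility argument showing $q\forces``\name{x}\in X"$ all go through as you describe, matching the paper's proof.
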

 
\begin{proof} (1) This case is modelled after the proof that works for Silver and other similar forcings. We give a proof modelled after Shelah's proof of
Crucial Fact 4.5 in \cite[pg.328]{Sh_P}. The new element here is what we do in the case of a limit $i$ in the below choice of $q_i, r_i$ and $\alpha_i$.

So, given $\zeta<\kappa$,
$p\in {\mathbb R}(\kappa)$ and $\name{\tau}$ such that $p\forces``\name{\tau}\mbox{ is an ordinal}"$. 

Let $\gamma$ be the minimal element of $\dom(p)$
such that ${\rm otp}(\gamma\setminus \dom(p))=\zeta$. We use the assumption on the inaccessibility, so $|{}^\gamma 2|<\kappa$, to enumerate
${}^\gamma 2=\{g_i:\,i<i^\ast\}$ for some $i^\ast<\kappa$.  By induction on $i<i^\ast$ we shall choose conditions $q_i$ and $r_i$ and an ordinal 
$\alpha_i$, as follows.

Let $q_0=g_0\frown p\rest [\gamma,\kappa)$ and let $r_0\ge q_0$ be a condition that forces a value to $\name{\tau}$, which is $\alpha_0$.

Given $r_i$, we define $q_{i+1}=g_{i+1}\frown r_i\rest [\gamma,\kappa)$. We let $r_{i+1}\ge q_{i+1}$ force a value to $\name{\tau}$, which is $\alpha_{i+1}$.
Arriving at a limit $\delta<i^\ast$, we notice that we have chosen $\langle r_i:\,i<\delta\rangle$ so that $\langle r_i\rest [\gamma,\kappa):\,i<\delta\rangle$ is an
increasing sequence of partial functions from $\kappa$ to 2, which are in fact conditions in $\mathbb R(\kappa)$, as we have observed in Observation \ref{obvious}. Hence by Lemma \ref{R:closure},
$r'_\delta\deq\bigcup_{i<\delta} r_i\rest [\gamma,\kappa)$ is a condition in $\mathbb R(\kappa)$. Let $q_\delta=g_\delta\frown r'_\delta$, so again 
a condition in $\mathbb R(\kappa)$. We let $r_\delta\ge q_\delta$ be a condition that forces a value to $\name{\tau}$, which is $\alpha_\delta$.

At the end of this induction, we let $q=p\rest \gamma\frown \bigcup_{i<i^\ast} r_i\rest  [\gamma,\kappa)$. By the same arguments as in the previous paragraph, $q$ is a condition and $q\ge_\zeta p$ by the choice of $\gamma$. Let $X=\{\alpha_i:\,i<i^*\}$, so $|X|<\kappa$. We claim that
$q\forces``\name{\tau}\in X"$. Otherwise, there is $r\ge q$ which forces that $\name{\tau}$ is not in $X$. Consider $r\rest\gamma$. It must have been
enumerated as some $g_i$. So we have that $r\ge r_i$, but $r_i\forces``\name{\tau}=\alpha_i"$ and $\alpha_i\in X$. A contradiction.

\medskip

{\noindent (2)} This proof is Baumgartner's idea reformulated in terms we used for (1). Let $\langle f_\alpha:\,\alpha<\kappa\rangle$ be a 
$\diamondsuit_\kappa$-sequence such that $f_\alpha:\,\alpha\to 3$ and for every $f:\kappa\to 3$ there is a stationary set of $\alpha<\kappa$ such that
$f\rest\alpha=f_\alpha$. To obtain such a $\diamondsuit_\kappa$-sequence from the ordinary $\diamondsuit_\kappa$ sequence 
we use the standard technique of a bijection between ${}^\kappa 3$ and ${}^\kappa 2$, see \cite[Lemma III 3.4]{zbMATH03861143} or \cite[Exercise II 53]{Kunen}.

We shall construct a fusion sequence $\bar{p}=\langle p_i:\,i<\kappa\rangle$ with $p=p_0$, a set $X=\{\alpha_i:\,i<\kappa\}$
of ordinals and a club $C=\{\gamma_i:\,i <\kappa\}$ of $\kappa$, as follows.

For each $\alpha<\kappa$ let $g_\alpha$ be the partial function from $\alpha$ to 2 obtained from $f_\alpha$ by removing from $\dom(f_\alpha)=\alpha$ the 
elements $\beta$ such that $f_\alpha(\beta)=2$.
Given $p_i$, let $\gamma_{i+1}$ be the minimal $\gamma$ with ${\rm otp}(\gamma\setminus \dom(p_i))=i+1$. By construction we shall have $\gamma_{i+1}
>\gamma_i$. Let $q_{i+1}=g_{\gamma_{i+1}}\frown p_i\rest[\gamma_{i+1}, \kappa)$ and let $r_{i+1}\ge q_{i+1}$ be a condition that forces a value $\alpha_{i+1}$
to $\name{\tau}$. Let $p_{i+1}= p_i \rest \gamma_{i+1}\frown r_{i+1}\rest [\gamma_{i+1}, \kappa)$.  At a limit $\delta<\kappa$, we proceed as in the case (1), except that instead of the fixed $\gamma$ we increase to 
$\gamma_\delta\deq\sup_{j<\delta}\gamma_j$. We notice that $p_i \le_{i+1} p_{i+1}$ for every $i<\kappa$, so since we have also taken care that at limit
$\delta$ we obtain $p_i\le_\delta p_\delta$ for every $i<\delta$, we have that $\bar{p}$ is a fusion sequence.

At the end of this procedure, we let $q$ be the fusion of $\bar{p}$, so we have $q\ge_\zeta p$. We claim that $q\forces``\name{\tau}\in X"$. Otherwise, suppose
that $r\ge q$ forces that $\name{\tau}\notin X$. Complete $r$ to a full function $f$ from $\kappa$ to 3 by letting $f(\alpha)=2$ if $r(\alpha)$ is undefined,
and $f(\alpha)=r(\alpha)$ otherwise. Let $\gamma\in C$ be such that $f\rest \gamma=f_\gamma$, say $\gamma=\gamma_i$, and so $r\rest \gamma=
g_\gamma$. In particular, $r$ and $r_i$ are compatible, and so they cannot force two different values to $\name{\tau}$. A contradiction.
$\eop_{\ref{lemma-mastering-R}}$
\end{proof}

\begin{observation}\label{R:cc} Suppose that $\mathbf V$ satisfies $2^\kappa=\kappa^+$. Then ${\mathbb R}(\kappa)$ is of size
$\kappa^+$ and in particular has $\kappa^{++}$-cc.
\end{observation}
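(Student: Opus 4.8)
The plan is a short cardinality count, followed by the standard passage from ``small poset'' to ``chain condition''.

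First, for the upper bound: a condition $p\in\mathbb R(\kappa)$ is by definition a partial function $p:\kappa\to 2$, so it is coded by the pair $(\dom(p),p)$, i.e.\ a subset of $\kappa$ together with a $2$-valued function on that subset. There are at most $2^\kappa$ subsets of $\kappa$, and for each of them at most $2^\kappa$ functions into $2$, so $|\mathbb R(\kappa)|\le 2^\kappa\cdot 2^\kappa=2^\kappa$, which equals $\kappa^+$ by the hypothesis $2^\kappa=\kappa^+$. (One could instead count only those subsets of $\kappa$ that are disjoint from a club, but this refinement is unnecessary for the bound.)

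Second, for the lower bound: the set $E$ of successor ordinals below $\kappa$ has cardinality $\kappa$ and is disjoint from the club of limit ordinals, so every function $f:E\to 2$ is a condition of $\mathbb R(\kappa)$; there are $2^\kappa=\kappa^+$ such functions and they are pairwise distinct as conditions. Hence $|\mathbb R(\kappa)|=\kappa^+$. Finally, a partial order of cardinality $\kappa^+$ automatically has the $\kappa^{++}$-chain condition, since any antichain is a subset of the partial order and so has size at most $\kappa^+<\kappa^{++}$.

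I expect no real obstacle here: the whole argument is elementary cardinal arithmetic. The only step worth a half-line of care is the lower bound, where it suffices to name a single co-club set of size $\kappa$ — the successor ordinals — to see that $\mathbb R(\kappa)$ carries $2^\kappa$ conditions.
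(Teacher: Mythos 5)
Your argument is correct and is exactly the routine counting that the paper leaves implicit (the observation is stated there without proof): there are at most $2^\kappa=\kappa^+$ partial functions from $\kappa$ to $2$, so $|\mathbb R(\kappa)|\le\kappa^+$, and a poset of size $\kappa^+$ trivially has no antichain of size $\kappa^{++}$. One small caveat: under the paper's convention of Remark \ref{rem:clubs} that a ``club'' of $\omega$ is any infinite set, your lower-bound witness (the set of successor ordinals) fails for $\kappa=\aleph_0$, since its complement in $\omega$ is finite and hence contains no club; any infinite co-infinite subset of $\omega$ repairs this, and in any case the lower bound is not needed for the $\kappa^{++}$-cc conclusion.
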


If we assume GCH in the ground model, the requirement that $\diamondsuit_\kappa$ holds in the ground model is really only important for $\kappa=\aleph_1$.
Namely, under GCH, for any $\lambda$ with $\cf(\lambda)>\aleph_0$, we have $\diamondsuit_{\lambda^+}$, see 
\cite[Lemma IV 2.7]{zbMATH03861143} and \S\ref{sec:history} for a historical remark.
Moreover, if $\kappa=\lambda^+$ for such $\lambda$, then $2^\kappa=\kappa^+$ suffices, as shown by Shelah in \cite{Sh:922}.

\section{Products of some Property B$(\kappa)$ Forcings}\label{sec:products} We show how the two examples of B$(\kappa)$ forcing we have seen so far behave under products. The first example collapses $\kappa^+$ but the second one preserves it, under suitable cardinal arithmetic conditions.

\subsection{Products of Perfect With Respect to a Filter Forcing}\label{subsec:productSacks} This product looks like Sacks side-to-side product, which is
described in Baumgartner's article \cite[\S1]{zbMATH03926906}. However, as opposed to the product of Sacks forcing, this forcing does not preserve 
$\kappa^+$. In fact, already the product of Grigorieff forcing does not preserve $\aleph_1$, as we shall show below.

We start by defining the forcing and by showing easy facts that do work out for this example.  Let $\kappa$ be an infinite regular cardinal, $\FF$ a $(<\kappa)$-complete filter on $\kappa$ and $\lambda\ge 1$ a cardinal.
We refer to \S\ref{gSacks} for the definitions relating to $\mathbb P=
\mathbb P(\FF)$. 
\begin{definition}\label{gSacks:def-lambda} (1) The $\lambda$-product of  $\FF$-perfect tree forcing $\mathbb P=\mathbb P(\FF)$ is defined as follows:

The conditions in $\mathbb P(\FF, \lambda)$ are partial functions from $\lambda$ into the collections of all $\FF$-perfect subtrees ${}^{{<\kappa}}\kappa$, satisfying the following conditions:
\begin{description}
\item [{\rm (o)}] $|\dom(p)|\le\kappa$,
\item [{\rm (a)}] For every $\alpha\in \dom(p)$, the value of $p(\alpha)$ is a condition in $\mathbb P(\mathcal F)$.
\end{description}
The order is given by $p\le q$ if $\dom(p)\subseteq \dom(q)$ and for every $\alpha\in\dom(p)$ we have $p(\alpha)\le_{\mathbb P(\FF)} q(\alpha)$.

\smallskip

{\noindent (2)} For $\zeta<\kappa$ we define $p\le_\zeta q$ if 
\begin{itemize}
\item $p\le q$ and
\item for every $\alpha\in \dom(p)$ we have $p(\alpha)\le_\zeta^{\mathbb P(\FF)} q(\alpha)$. 
\end{itemize}
 \end{definition}
 
\begin{observation}\label{obs:generic-productPF} The generic object added by $\mathbb P(\FF, \lambda)$ is a sequence of $\lambda$-many $\mathbb P(\mathcal F)$-generic
functions.
\end{observation}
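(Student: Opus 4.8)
The plan is to follow the routine template for $\kappa$-support products: given a generic filter for the product, project it to each coordinate and check that the projections are generic for the single-step forcing. Fix a $\mathbb P(\FF,\lambda)$-generic filter $G$ over $\V$. For each $\alpha<\lambda$ put
\[
G_\alpha=\{\,p(\alpha):\,p\in G\text{ and }\alpha\in\dom(p)\,\}\subseteq\mathbb P(\FF),
\]
and let $g_\alpha=\bigcup\{\,s[T]:\,T\in G_\alpha\,\}$, using the notation of \ref{s-tree}. I claim that each $G_\alpha$ is a $\mathbb P(\FF)$-generic filter over $\V$; once this is established, Lemma~\ref{lem:generic-kappa-function} applies to $G_\alpha$ and shows that $g_\alpha$ is a function from $\kappa$ to $\kappa$, namely the generic function added by $\mathbb P(\FF)$ in the sense of \ref{genPrikrygeneric}. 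The sequence $\langle g_\alpha:\,\alpha<\lambda\rangle$ is then the generic object added by $\mathbb P(\FF,\lambda)$; indeed whenever $p\in G$ and $\alpha\in\dom(p)$ a short directedness argument gives $g_\alpha\in[p(\alpha)]$, so the sequence is read off from $G$.

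First I would verify that $G_\alpha$ is a filter. For directedness, given $p_0,p_1\in G$ with $\alpha\in\dom(p_0)\cap\dom(p_1)$, pick $r\in G$ with $p_0,p_1\le r$; then $\alpha\in\dom(r)$ and $r(\alpha)\in G_\alpha$ satisfies $p_0(\alpha),p_1(\alpha)\le r(\alpha)$ in $\mathbb P(\FF)$, by the definition of the product order. For upward closure, suppose $T=p(\alpha)\le S$ in $\mathbb P(\FF)$ with $p\in G$ and $\alpha\in\dom(p)$, and let $p'$ be the condition with $\dom(p')=\dom(p)$ that agrees with $p$ off $\alpha$ and has $p'(\alpha)=S$. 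Then $p'\ge p$, so $p'\in G$ and hence $S=p'(\alpha)\in G_\alpha$.

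Next comes genericity, which is the only place the product structure does any work. Let $D\subseteq\mathbb P(\FF)$ be dense with $D\in\V$, and set
\[
D'=\{\,p\in\mathbb P(\FF,\lambda):\,\alpha\in\dom(p)\text{ and }p(\alpha)\in D\,\}.
\]
I would check that $D'$ is dense in $\mathbb P(\FF,\lambda)$: given $q$, if $\alpha\notin\dom(q)$ choose any $T\in D$ (possible since $\mathbb P(\FF)$ is nonempty, the full tree ${}^{{<\kappa}}\kappa$ being a condition, and $D$ is dense) and extend $q$ to $q\cup\{(\alpha,T)\}$, which is again a condition because $|\dom(q)\cup\{\alpha\}|\le\kappa$; if $\alpha\in\dom(q)$ use density of $D$ to find $T\ge q(\alpha)$ with $T\in D$ and replace the value at $\alpha$ by $T$. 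In either case we obtain an extension of $q$ in $D'$. Hence $D'$ meets $G$, so $G_\alpha$ meets $D$; since $D$ was arbitrary, $G_\alpha$ is $\mathbb P(\FF)$-generic over $\V$, and the conclusion follows from Lemma~\ref{lem:generic-kappa-function} as above.

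I do not anticipate a real obstacle: the statement is the familiar fact that a generic filter for a product projects to generic filters on the factors, and the sole product-specific point — the density of $D'$ — works precisely because the conditions of $\mathbb P(\FF,\lambda)$ are allowed to have domains of size $\le\kappa$, so adjoining one coordinate never violates clause (o) of Definition~\ref{gSacks:def-lambda}. The Observation does not assert, and this argument does not establish, \emph{mutual} genericity of the $g_\alpha$; that stronger statement would need the standard product-genericity lemma and is not required here.
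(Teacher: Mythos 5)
Your overall strategy is the right one, and in fact the paper offers no argument at all for this Observation (it is stated as immediate and followed directly by Notation~\ref{union-not}), so your write-up simply supplies the routine justification: project $G$ to each coordinate, verify that the projection $G_\alpha$ is a generic filter via the density of $D'$, and invoke Lemma~\ref{lem:generic-kappa-function}. The directedness argument and the density argument for $D'$ are both correct, and you are right that the density of $D'$ is the only place the product structure matters.

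There is, however, one step that fails as written: your ``upward closure'' verification. You take $T=p(\alpha)\le S$, pass to $p'$ with $p'(\alpha)=S$, note $p'\ge p$, and conclude $p'\in G$. In the paper's Jerusalem convention ($p\le q$ means $q$ is stronger), a generic filter is \emph{not} closed under passing to stronger conditions, so the inference ``$p'\ge p$, hence $p'\in G$'' is invalid; moreover, what you would be proving there (closure of $G_\alpha$ under strengthening) is not a filter property and is not what is needed. What $G_\alpha$ must satisfy is closure under weakening: if $T\in G_\alpha$ and $S\le T$, then $S\in G_\alpha$. The repair is exactly your argument with all inequalities reversed: given $p\in G$ with $p(\alpha)=T$ and $S\le T$, let $p'$ agree with $p$ off $\alpha$ and have $p'(\alpha)=S$; then $p'\le p$, so $p'\in G$ by downward closure of $G$, and hence $S=p'(\alpha)\in G_\alpha$. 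With that sub-step corrected, the proof is complete, and your closing remark that mutual genericity is neither claimed nor needed is accurate.
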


\begin{notation}\label{union-not} Suppose that $\bar{p}=\langle p_i:\;i<i^*\rangle$ for some $i^\ast<\kappa$
is an increasing sequence of conditions in $\mathbb P(\FF, \lambda)$.
By $\bigcup \bar{p}$ we mean the function $q$ defined by letting $\dom(q)=\bigcup_{i<i^\ast} \dom(p_i)$ and for every
$\alpha\in \dom(q)$ we have $q(\alpha)=\bigcap_{i<i^\ast} p_i(\alpha)$.
\end{notation}

\begin{lemma}\label{P:closurelambda} The product $\mathbb P(\FF, \lambda)$ is $(<\kappa)$-closed, and moreover, every increasing
sequence $\bar{p}=\langle p_i:\;i<i^*\rangle$ of conditions in $\mathbb P(\FF, \lambda)$ where $i^*<\kappa$ has a lub which is 
$\bigcup \bar{p}$.
\end{lemma}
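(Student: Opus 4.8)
The plan is to generalize the proof of Lemma \ref{when-closed} componentwise, using the fact that the product is taken with supports of size $\le\kappa$ and $\kappa$ is regular, so that the union of $<\kappa$ many supports of size $\le\kappa$ is still of size $\le\kappa$.

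First I would verify that $q=\bigcup\bar p$ (in the sense of Notation \ref{union-not}) is a well-defined condition. Its domain is $\bigcup_{i<i^*}\dom(p_i)$, a union of $i^*<\kappa$ sets each of size $\le\kappa$, hence of size $\le\kappa$ by regularity of $\kappa$; so condition (o) holds. For each $\alpha\in\dom(q)$, the sequence $\langle p_i(\alpha):\,i<i^*,\ \alpha\in\dom(p_i)\rangle$ is (after discarding the indices where $\alpha$ is not yet in the domain, and noting these form a tail since domains increase) an increasing sequence in $\mathbb P(\FF)$, so by Lemma \ref{when-closed} its intersection $q(\alpha)=\bigcap_i p_i(\alpha)$ is a condition in $\mathbb P(\FF)$ and is in fact the least upper bound of that sequence; this gives condition (a). Hence $q\in\mathbb P(\FF,\lambda)$.

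Next I would check that $q$ is an upper bound of $\bar p$: for each $j<i^*$ we have $\dom(p_j)\subseteq\dom(q)$, and for $\alpha\in\dom(p_j)$ we have $q(\alpha)=\bigcap_{i<i^*}p_i(\alpha)\subseteq p_j(\alpha)$, i.e. $p_j(\alpha)\le_{\mathbb P(\FF)}q(\alpha)$, so $p_j\le q$. Finally, for the least-upper-bound claim, suppose $r\ge p_i$ for all $i$. Then $\dom(q)=\bigcup_i\dom(p_i)\subseteq\dom(r)$, and for each $\alpha\in\dom(q)$ the condition $r(\alpha)$ is an upper bound of $\langle p_i(\alpha):\,i<i^*\rangle$ in $\mathbb P(\FF)$, so by the lub part of Lemma \ref{when-closed}, $q(\alpha)\le_{\mathbb P(\FF)}r(\alpha)$; therefore $q\le r$. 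Since every increasing sequence of length $<\kappa$ has an upper bound, $\mathbb P(\FF,\lambda)$ is $(<\kappa)$-closed.

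I do not expect a genuine obstacle here: everything reduces to the single-coordinate statement already proved in Lemma \ref{when-closed}, and the only global ingredient is the cardinal arithmetic $\kappa\cdot\sup_{i<i^*}|\dom(p_i)|\le\kappa$, which uses regularity of $\kappa$ and $i^*<\kappa$. The one point deserving a sentence of care is the bookkeeping for a coordinate $\alpha$ that enters the domain only at some stage $i_0<i^*$: there one applies Lemma \ref{when-closed} to the tail sequence $\langle p_i(\alpha):\,i_0\le i<i^*\rangle$ and observes that $\bigcap_{i<i^*}p_i(\alpha)=\bigcap_{i_0\le i<i^*}p_i(\alpha)$ since the earlier terms simply do not appear.
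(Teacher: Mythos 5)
Your proof is correct and follows the same route as the paper, whose entire proof of this lemma is simply that it follows from Lemma \ref{when-closed} applied coordinatewise; you have just spelled out the routine details (support size, tail bookkeeping for coordinates entering the domain late, and the lub verification). One tiny remark: the bound $|\dom(q)|\le\kappa$ needs only $\kappa\cdot\kappa=\kappa$ for the union of $i^*<\kappa$ sets of size $\le\kappa$, so regularity of $\kappa$ is not actually required at that step.
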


\begin{proof} This follows from Lemma \ref{when-closed}. 
$\eop_{\ref{P:closurelambda}}$ 
\end{proof}

\begin{lemma}[{\bf Fusion Lemma}]\label{Sacks:FusionP} Suppose that $\delta\le \kappa$ is a limit ordinal and $\bar{p}=\langle p_\zeta:\,\zeta<\delta\rangle$ is such that $p_\zeta\le_{\zeta+1}  p_{\zeta+1}$ for each $\zeta$, while for limit ordinals $\varepsilon <\delta$ we have $p_\varepsilon\le _\delta p_\delta$ .
Then $q=\bigcup \bar{p}$ is a condition in $\mathbb P(\mathcal F, \lambda)$ such that for every $\zeta<\delta$ we have $p_\zeta\le_\zeta q$, and in addition,
$q$ is the least condition with these properties.
\end{lemma}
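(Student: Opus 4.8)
The plan is to reduce the product Fusion Lemma (Lemma~\ref{Sacks:FusionP}) to the single-coordinate Fusion Lemma (Lemma~\ref{Sacks:Fusion}) applied coordinatewise. First I would observe that by the definition of $\le_\zeta$ on $\mathbb{P}(\FF,\lambda)$, the hypotheses $p_\zeta\le_{\zeta+1}p_{\zeta+1}$ and $p_\varepsilon\le_\delta p_\delta$ (for $\varepsilon<\delta$ limit) say precisely that (i) $\dom(p_\zeta)\subseteq\dom(p_{\zeta+1})$ and $\dom(p_\varepsilon)\subseteq\dom(p_\delta)$, and (ii) for each $\alpha$ in the relevant domain, the corresponding single-coordinate inequalities $p_\zeta(\alpha)\le_{\zeta+1}^{\mathbb{P}(\FF)}p_{\zeta+1}(\alpha)$ and $p_\varepsilon(\alpha)\le_\delta^{\mathbb{P}(\FF)}p_\delta(\alpha)$ hold. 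Since the domains form a $\subseteq$-increasing chain, $\dom(q)=\bigcup_{\zeta<\delta}\dom(p_\zeta)$ is well-defined, and for each $\alpha\in\dom(q)$ there is a least $\zeta(\alpha)<\delta$ with $\alpha\in\dom(p_{\zeta(\alpha)})$; the tail sequence $\langle p_\zeta(\alpha):\,\zeta(\alpha)\le\zeta<\delta\rangle$ then satisfies exactly the hypotheses of Lemma~\ref{Sacks:Fusion} for coordinate $\alpha$ (padding harmlessly with $p_{\zeta(\alpha)}(\alpha)$ below $\zeta(\alpha)$, or just applying the lemma to the shifted sequence, noting the orders $\le_\zeta^{\mathbb{P}(\FF)}$ are nested so a shift does not matter).

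Next I would invoke Lemma~\ref{Sacks:Fusion} coordinatewise: for each $\alpha\in\dom(q)$, $q(\alpha)=\bigcap_{\zeta(\alpha)\le\zeta<\delta}p_\zeta(\alpha)=\bigcap_{\zeta:\alpha\in\dom(p_\zeta)}p_\zeta(\alpha)$ is a condition in $\mathbb{P}(\FF)$ with $p_\zeta(\alpha)\le_\zeta^{\mathbb{P}(\FF)}q(\alpha)$ for every relevant $\zeta$, and it is the least such. This is exactly the definition of $\bigcup\bar p$ from Notation~\ref{union-not} (restricted to the coordinates actually present). Checking that $q$ is a condition in $\mathbb{P}(\FF,\lambda)$ is then immediate: each $q(\alpha)$ is an $\FF$-perfect tree by Lemma~\ref{Sacks:Fusion}, and $|\dom(q)|\le\kappa$ since $\dom(q)$ is a union of $\le|\delta|\le\kappa$ sets each of size $\le\kappa$ and $\kappa$ is regular (here $\delta\le\kappa$). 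Then $p_\zeta\le_\zeta q$ follows: $\dom(p_\zeta)\subseteq\dom(q)$, and for each $\alpha\in\dom(p_\zeta)$ we have $p_\zeta(\alpha)\le_\zeta^{\mathbb{P}(\FF)}q(\alpha)$ from the coordinatewise conclusion.

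For the minimality (``least condition'') clause, suppose $r\in\mathbb{P}(\FF,\lambda)$ also satisfies $p_\zeta\le_\zeta r$ for all $\zeta<\delta$. Then $\dom(p_\zeta)\subseteq\dom(r)$ for all $\zeta$, so $\dom(q)\subseteq\dom(r)$, and for each $\alpha\in\dom(q)$ and each $\zeta$ with $\alpha\in\dom(p_\zeta)$ we have $p_\zeta(\alpha)\le_\zeta^{\mathbb{P}(\FF)}r(\alpha)$, so by the minimality half of Lemma~\ref{Sacks:Fusion}, $q(\alpha)\le_{\mathbb{P}(\FF)}r(\alpha)$; hence $q\le r$ in $\mathbb{P}(\FF,\lambda)$.

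I do not expect a genuine obstacle here; the only point requiring a little care is the bookkeeping around coordinates that enter the domain at a limit stage, where one must confirm that the hypothesis $p_\varepsilon\le_\delta p_\delta$ (for limit $\varepsilon<\delta$) indeed delivers the limit-stage requirement of a $\delta$-fusion sequence in each coordinate $\alpha$ once $\alpha$ has appeared, together with the observation that adding coordinates only at successor stages is handled by the $\subseteq$-increasing domain chain and the nesting $\xi<\zeta\implies\,\le_\xi^{\mathbb{P}(\FF)}\supseteq\,\le_\zeta^{\mathbb{P}(\FF)}$. Everything else is a direct appeal to the single-coordinate lemma and the definitions of the product orders.
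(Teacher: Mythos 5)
Your proposal is correct and follows essentially the same route as the paper: the paper also verifies $|\dom(q)|\le\kappa$ and then applies the single-coordinate Fusion Lemma \ref{Sacks:Fusion} coordinatewise to get that each $q(\alpha)=\bigcap_\zeta p_\zeta(\alpha)$ is the least upper bound in the relevant orders, from which $p_\zeta\le_\zeta q$ and minimality of $q$ follow. Your extra bookkeeping for coordinates that enter the domain late (padding/shifting, using the nesting of the orders) is a careful treatment of a point the paper glosses over, but it is the same argument.
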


\begin{proof} Since $\delta\le\kappa$ and $|\dom(p_\zeta)|\le\kappa$ for all $\zeta$, we 
have $|\dom(q)|\le\kappa$. For each $\alpha\in \dom(q)$ we have that 
$\langle p_\zeta(\alpha):\,\zeta<\delta\rangle$ is a fusion sequence in $\mathbb P(\FF)$ and hence by Lemma \ref{Sacks:Fusion} we have that
$q(\alpha)=\bigcap_{\zeta<\delta} p_\zeta(\alpha)$ is an upper bound of that sequence and the least such. Hence $q$ is a lub  of $\bar{p}$ and we similarly have that $p_\zeta\le_\zeta q$ for any $\zeta<\delta$.
$\eop_{\ref{Sacks:FusionP}}$
\end{proof}

The main idea of the {\em Mastering $\kappa$ arguments} in the product of Sacks forcing with countable support uses partial fusion. A partial fusion sequence in the product is built through an increasing sequence $\langle p_n: \, n<\omega \rangle$ where $p_n\le_{F_n, n} p_{n+1}$ and the the $F_n$s are finite sets with the union equal to $\bigcup_{n<\omega} \dom(p_n)$. At the step $n$ we consider all the $n$-fronts of $p_n$ (finitely many) and build and increasing sequence of conditions where we have fused all the possibilities for the forced value of the name of the ordinal we are dealing with. This is well described in 
Jech \cite[Theorem I 5.12]{Jec86}. The success of this proof depends on the fact that the number of fronts to consider is finite and the number of possible extensions of length $n+1$ is finite. In our case, or in the case of Grigorieff forcing, the number of fronts is $\kappa$, since we have filter-many extensions of every stem to consider. While we could still do the fusion on
the coordinates in $F_n$, the remaining parts of the conditions to fuse are not increasing in a fusion order, but only in the original order $\le$, and might not
have a common upper bound.

We give an example of what could happen, which shows that $\kappa^+$ is collapsed by such a product. See \S\ref{sec:history} for some contradictory remarks that have appeared in print.

\begin{theorem}\label{claim:collapse} Suppose that $2^\kappa=\kappa^+$ in the ground model. Then $\mathbb P(\FF, \kappa)$ collapses $\kappa^+$.
\end{theorem}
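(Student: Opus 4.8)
The plan is to produce, in the generic extension, a surjection of $\kappa$ onto $(\kappa^+)^{\mathbf V}$; since $\mathbb P(\FF,\kappa)$ is $(<\kappa)$-closed (Lemma \ref{P:closurelambda}) it preserves all cardinals and cofinalities $\le\kappa$, so having such a surjection is precisely what it means for it to collapse $\kappa^+$. Before constructing it I would make two reductions. First, $\mathbb P(\FF)$ has a greatest element, the full tree ${}^{<\kappa}\kappa$ (it is $\FF$-perfect and satisfies (o), (a), (b) trivially, and lies above every condition), so the conditions with domain all of $\kappa$ are dense; hence we may work with the full-support product $\prod_{\alpha<\kappa}\mathbb P(\FF)$ and with the sequence $\langle\name g_\alpha:\alpha<\kappa\rangle$ of mutually generic functions from Observation \ref{obs:generic-productPF}, each inducing a $\mathbb P(\FF)$-generic filter $\name G_\alpha$ on its coordinate. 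Second, use $2^\kappa=\kappa^+$ to fix in $\mathbf V$ a surjection $c$ from ${}^\kappa\kappa$ (equivalently, from the set of $\FF$-perfect trees, or from any convenient $\kappa^+$-sized parameter set) onto $\kappa^+$, together with any bookkeeping that is needed.

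Next I would define the name $\name f:\kappa\to\kappa^+$ so that $\name f(\alpha)$ records, via $c$, the way the generic on coordinate $\alpha$ descends \emph{relative to} what the generics on the other coordinates have already prescribed. The design should make $\name f$ genuinely spread out: pinning down a single value $\name f(\alpha)$ requires consulting cofinally many of the $\name g_\beta$'s, so $\name f$ cannot be tamed by any fusion sequence — which is consistent with Lemma \ref{preservation-kappa-plus}, whose contrapositive tells us the product cannot enjoy Property $B(\kappa)$. On the other hand, any \emph{prescribed} value $\xi<\kappa^+$ is to be forceable by a \emph{bounded} construction: given a condition $p$ and a target $\xi$, one extends $p$ along a $\le$-increasing sequence of length $<\kappa$ — which has an upper bound by $(<\kappa)$-closure (Lemma \ref{P:closurelambda}) — realizing an appropriate pattern of stems on enough coordinates. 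It is here that $\FF$-perfectness is used essentially, and here that the contrast with Axiom A lives: because an $\FF$-perfect tree branches $\FF$-wide at each splitting node, a pattern of nodes through enough coordinates ranges over a $\kappa^+$-sized, hence cofinal-in-$\kappa^+$, family of $c$-values, so the prescribed $\xi$ can be overshot; dually, this same $\kappa$-width of the fronts is exactly what makes the amalgamation step collapse in the product-fusion argument that succeeds for Sacks forcing (Jech's proof, as recalled in the discussion preceding the statement), where one would have to intersect $\kappa$-many $\FF$-perfect trees on an auxiliary coordinate.

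The crux — and the step I expect to be the main obstacle — is verifying that $p^\ast$ forces ``$\mathrm{ran}(\name f)$ is cofinal in $\kappa^+$'': for every condition $p$ and every $\xi<\kappa^+$ one must exhibit $q\ge p$ and $\alpha<\kappa$ with $q\Vdash\name f(\alpha)>\xi$. The difficulty is that $p$ may already constrain every coordinate past some bounded level, so one cannot merely lengthen one stem; the argument must instead locate enough ``room'' — among coordinates that $p$ has not deeply constrained, or by exploiting the $\FF$-wide branching on the constrained ones — to realize, by a $<\kappa$-long extension only, a pattern of stems whose $c$-value exceeds $\xi$. I expect the bulk of the work to consist of choosing $c$ and the precise recipe for $\name f$ so that this density statement holds uniformly in $p$ and $\xi$, i.e.\ so that no condition can block arbitrarily large values of $\name f$. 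Once this is established, $p^\ast\Vdash``\name f:\kappa\to(\kappa^+)^{\mathbf V}$ is cofinal'', and therefore $\mathbb P(\FF,\kappa)$ collapses $\kappa^+$.
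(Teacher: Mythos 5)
Your proposal correctly identifies the outer shell of the argument --- use $2^\kappa=\kappa^+$ to fix a $\kappa^+$-enumeration of a ground-model parameter set, define from the $\kappa$-many generics a name for a map on $\kappa$, and show densely that its ``$\kappa^+$-values'' can be pushed past any prescribed bound --- but it stops exactly where the proof begins. You never define the name $\name{f}$ nor the coding $c$, and you explicitly defer the density verification (``I expect the bulk of the work to consist of choosing $c$ and the precise recipe for $\name f$''), which is the entire mathematical content of the theorem. As it stands this is a plan, not a proof: there is no object one could check the density statement against.

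For comparison, the paper's proof supplies precisely the missing recipe, and its mechanism differs from the one you gesture at in two ways that matter. First, instead of a single $\kappa^+$-valued name, it defines for each $\alpha<\kappa$ a name $\name{h}_\alpha$ for a function $\kappa\to\kappa$ \emph{threaded through the coordinates in order}: $\name{h}_\alpha(0)=\name{g}_0(\alpha)$, $\name{h}_\alpha(\zeta+1)=\name{g}_{\zeta+1}(\name{h}_\alpha(\zeta))$, and at limits $\name{h}_\alpha(\zeta)=\name{g}_\zeta(\sup_{\xi<\zeta}\name{h}_\alpha(\xi))$. The point of this design is that deciding $\name{h}_\alpha(\zeta)$ only requires long enough stems on the first $\zeta+1$ coordinates, so a single condition $q=\langle q_\zeta:\zeta<\kappa\rangle$, built \emph{coordinatewise} by induction on $\zeta<\kappa$ (each $q_\zeta\ge p_\zeta$ chosen once, with a long enough stem), decides the whole function $\name{h}_\alpha$; this is not a $<\kappa$-long $\le$-increasing chain closed off by $(<\kappa)$-closure, as you suggest, and indeed it is unclear how a bounded extension could pin down an object carrying $\kappa$-much information. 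Second, the density argument does not force a \emph{prescribed} $\xi$ at all: given $p$ and $j_0<\kappa^+$, one re-enumerates $\{e_j:j<j_0\}$ in order type $\kappa$ and, at coordinate $\zeta$, uses the $\FF$-wide splitting (Observation \ref{obs:aslarge}) to diagonalize $\name{h}_\alpha$ against the $\zeta$-th function; so $q$ forces $\name{h}_\alpha$ to equal some $e_j$ with $j\ge j_0$, merely \emph{overshooting} $j_0$. This gives densely that $\sup\{j:\exists\alpha\,\name{h}_\alpha=e_j\}=(\kappa^+)^{\V}$, hence $\cf^{\V[G]}((\kappa^+)^{\V})\le\kappa$ and the collapse. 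Until you commit to a concrete definition of $\name f$ with an analogous ``finite look-ahead per value'' property and carry out the diagonalization, the crux you flag remains an open gap rather than a step of a proof.
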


\begin{proof} We recall that $\FF$ is a $(<\kappa)$-complete filter on $\kappa$ in $\V$. The proof is modelled after the proof of \cite[Proposition 5.17]{Jec86}, credited to Baumgartner. 

A condition in $\mathbb P(\FF, \kappa)$ is of the form $\langle p_\zeta:\,\zeta<\kappa\rangle$ where each $p_\alpha$ is a condition in $\mathbb P(\FF)$
(see Definition \ref{gSacks:def}). As observed in Observation \ref{obs:generic-productPF}, the generic object added by this forcing is a sequence $\langle g_\gamma:\,\gamma<\kappa\rangle$ where each $g_\gamma$
is $\mathbb P(\FF)$-generic. We are going to define a $\kappa$-sequence of $\mathbb P(\FF, \kappa)$-names 
$\langle \name{h}_\alpha:\,\alpha <\kappa\rangle$ for functions from $\kappa$ to $\kappa$, as follows. Given $\alpha<\kappa$, we define $\name{h}_\alpha(\zeta)$
for $\zeta<\kappa$, by induction on $\zeta$ and using the canonical names $\langle \name{g}_\gamma:\,\gamma<\kappa\rangle$ as a parameter.

We let $\name{h}_\alpha(0)=\name{g}_0(\alpha)$. For an ordinal of the from $\zeta+1$ for $\zeta<\kappa$ we let 
$\name{h}_\alpha(\zeta+1)=\name{g}_{\zeta+1}(\name{h}_\alpha(\zeta))$. For $\zeta$ limit $>0$ we let $\name{h}_\alpha(\zeta)=\name{g}_{\zeta}(\sup_{\xi<\zeta} \name{h}_\alpha(\xi))$. 

Let $\{e_j:\,j<\kappa^+\}$ enumerate ${}^\kappa \kappa$ in $\V$. We shall show that the set of conditions $q$ in $\mathbb P(\FF, \kappa)$ 
which force that $\sup (\{j<(\kappa^+)^\V:\,(\exists \alpha<\kappa:\,\name{h}_\alpha= e_j\})=(\kappa^+)^\V$ is dense, hence showing that $\cf^{\V[G]}(\kappa^+)^\V\le \kappa$, as claimed. So, let $\langle p_\zeta:\,\zeta<\kappa\rangle$ be a given a condition and let $j_0<\kappa^+$ be given. We shall find $q\ge p$ and $j\ge j_0$ 
such that $q\forces ``\name{h}_\alpha=e_j"$ for some $\alpha$. In fact, we shall simply assure that $q$ forces that $\name{h}_\alpha$
is not among $\{e_j:\,j< j_0\}$, yet that it decides that $\name{h}_\alpha$ is in the ground model.

Let us reenumerate, possibly with repetitions, $\{e_j:\,j< j_0\}$ as $\{f_\zeta:\,\zeta<\kappa\}$. We define $q=\langle q_\zeta:\,\zeta<\kappa\rangle$
by choosing $q_\zeta \in \mathbb P$ by induction on $\zeta<\kappa$. We shall use an easily established property of $\mathbb P(\FF)$.

\begin{observation}\label{obs:aslarge} Given a condition $r\in \mathbb P(\FF)$, a function $f:\;\kappa\to\kappa$ and $\alpha<\kappa$. There is a $r'\ge r$
which forces that for some $\beta> \alpha$, $\name{g}(\beta)> f(\beta)$.
\end{observation}

[This is true because we can find a splitting node $s$ of $r$ of height $\beta$ above $\alpha$. Since $s$ splits into $\FF$-many values, there is $i>f(\beta)$
such that $s'=s\,\frown (\beta,i)$ is a node of $r$. Then it suffices to let $r'=r_{s'}$.]

\smallskip

To continue the proof, let $\alpha>\lg(s[p_0])$. We shall make sure that the condition $q$ forces that $\name{h}_\alpha$
is in the ground model. Let $q_0\ge_ {\mathbb P(\FF)} p_0$ be such that $\lg (s[q_0])>\alpha$ and such that we have $s[q_0](\alpha)> f_0(\alpha)$. Hence, if $q_0$ will be the first coordinate of $q$, then $q$ will decide the value of 
$\name{h}_\alpha(0)=\name{g}_0(\alpha)$ and also make sure that $\name{h}_\alpha\neq f_0$.
 For an ordinal of the from $\zeta+1$ for $\zeta<\kappa$, we assume that $q_\zeta\in \mathbb P(\FF)$ was chosen so that any condition $q\in \mathbb P(\FF, \kappa)$ with an initial 
 part $\langle q_0, \ldots q_\zeta\rangle$ decides the value of $\name{h}_\alpha(\zeta)$, which we shall denote by $\alpha_\zeta$.
 Then we let $q_{\zeta +1}\ge p_{\zeta+1}$ be such that $\lg(s[q_{\zeta +1}])> \alpha_\zeta$ and that $s[q_{\zeta +1}](\alpha_\zeta)> f_\zeta(\alpha_\zeta)$.
 So we have succeeded to continue the inductive hypothesis, as indeed any $q\in \mathbb P(\FF, \kappa)$ which starts with $\langle q_0, \ldots q_{\zeta+1}\rangle$ 
 decides the value of $\name{h}_\alpha(\zeta+1)$, since $s[q_{\zeta +1}](\alpha_\zeta)$ is well defined.

 At limit stages $\zeta$, we assume that for every $\xi<\zeta$ we have chosen $q_\xi \in \mathbb P(\FF)$ so that any condition $q\in \mathbb P(\FF, \kappa)$ with an initial part $\langle q_0, \ldots q_\xi\rangle$ decides the value $\alpha_\xi=\name{h}_\alpha(\xi)$. Let $\alpha_\zeta=\sup_{\xi<\zeta}\alpha_\xi$ and
 choose $q_\zeta\in \mathbb P(\FF)$ so that $\lg(s[q_\zeta])>\alpha_\zeta$ and that $s[q_{\zeta}](\alpha_\zeta)> f_\zeta(\alpha_\zeta)$. 

At the end, the condition $q=\langle q_\zeta:\,\zeta<\kappa\rangle$ is as required.
$\eop_{\ref{claim:collapse}}$
\end{proof}

\subsection{Product of Baumgartner's Subsets of $\kappa$}\label{subsec:R(k,l)} This is Baumgartner's forcing $R(\kappa,\lambda)$ from 
\cite[pg.428]{zbMATH03529856}. We shall build on Baumgartner's proofs and the proofs from \S\ref{subsec:R(kappa)}. 
Apart from small combinatorial and notational changes, there is not much difference between the proofs here and those in \S\ref{subsec:R(kappa)}, except
some twists in the proof of {\em Mastering $\kappa$}. It is fair to state that these proofs are new compared to those of \cite{zbMATH03529856}, since
a different notation not withstanding, the proofs in \cite{zbMATH03529856} do not use the fusion and hence their definition of the final condition $q$ in the
inductive arguments for cardinal preservation is wrong.

\begin{definition}\label{def:Baum-kappa-lambda} Let $\kappa$ be a regular cardinal and $\lambda$ any cardinal $> 0$. 
The forcing $\mathbb R(\kappa, \lambda)$ consists of all partial functions
$p:\,\lambda\times\kappa\to 2$ such that 
\begin{enumerate}
\item 
for any $\upsilon<\lambda$, the set
\[
\dom^\upsilon(p)\deq\{i<\kappa:\,(\upsilon,i)\in \dom(p)\}
\]
is disjoint from a club subset of $\kappa$,
\item $|\dom(p)|<\kappa$.
\end{enumerate}
The ordering is given by $p\le q$ iff $p\subseteq q$.
\end{definition}

\begin{notation}\label{not:domains} For $p\in \mathbb R(\kappa, \lambda)$ we let 
$\dom_\lambda(p)\deq \{\upsilon<\lambda:\,(\exists i<\kappa )
(\upsilon,i)\in \dom(p)\}$ and $\dom_\kappa(p)\deq \{i<\kappa:\,(\exists \upsilon<\lambda)
(\upsilon,i)\in \dom(p)\}$.

For a condition $p\in \mathbb R(\kappa, \lambda)$ and $\upsilon\in \dom_\lambda(p)$, let $p^\upsilon:\,\kappa\to 2$ be the partial function from
$\kappa$ to $2$ defined by letting $p^\upsilon(i)=l\iff p(\upsilon, i)=l$.
\end{notation}

\begin{observation}\label{obs:genericRproduct} The generic object added by $\mathbb R(\kappa, \lambda)$ is a $\lambda$-sequence
$\langle g_\nu:\,\nu<\lambda\rangle$ such that each $g_\nu:\,\kappa\to 2$ is $\mathbb R(\kappa)$-generic and is given by $g_\nu(i)=p(\nu, i)$ for any $p\in G$ with $(\nu,i)\in\dom(p)$.
\end{observation}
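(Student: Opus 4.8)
The plan is to proceed exactly as one verifies that the generic object added by a product of forcings is the product of the generic objects, and to use that each coordinate of a condition in $\mathbb{R}(\kappa,\lambda)$ is a condition in $\mathbb{R}(\kappa)$ in the sense of Definition \ref{def:Baum-kappa}. First I would fix a $\mathbb{R}(\kappa,\lambda)$-generic filter $G$ and for each $\nu<\lambda$ define $g_\nu = \bigcup\{p^\nu:\,p\in G,\ \nu\in\dom_\lambda(p)\}$, using Notation \ref{not:domains}. I would first check that $g_\nu$ is a well-defined partial function from $\kappa$ to $2$: if $p,q\in G$ then by directedness of $G$ there is $r\in G$ with $p,q\le r$, so $p^\nu\subseteq r^\nu$ and $q^\nu\subseteq r^\nu$ whenever the relevant domains are nonempty; hence any two of the $p^\nu$ are compatible as functions. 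This establishes that $g_\nu$ is a function and that $g_\nu(i)=p(\nu,i)$ for any $p\in G$ with $(\nu,i)\in\dom(p)$, which is the last clause of the statement.

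Next I would show each $g_\nu$ is $\mathbb{R}(\kappa)$-generic. For this I would use the standard projection argument: the map $p\mapsto p^\nu$ (with $p^\nu=\emptyset$ when $\nu\notin\dom_\lambda(p)$) is a projection from $\mathbb{R}(\kappa,\lambda)$ onto $\mathbb{R}(\kappa)$, because any condition $r\in\mathbb{R}(\kappa)$ with $r\ge p^\nu$ can be lifted to the condition $p'\in\mathbb{R}(\kappa,\lambda)$ obtained from $p$ by replacing its $\nu$-th section by $r$ — one checks $p'$ is still a condition since $\dom^\nu(p')=\dom(r)$ avoids a club and the bound $|\dom(p')|<\kappa$ is preserved as only one section changed and $r\in\mathbb{R}(\kappa)$ forces $|\dom(r)|<\kappa$. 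From the existence of such a projection it follows by the usual lemma on projections (see \cite[ChVII]{Kunen}) that the image filter $G_\nu=\{r\in\mathbb{R}(\kappa):\,(\exists p\in G)\,p^\nu\le r\}$ is $\mathbb{R}(\kappa)$-generic over $\mathbf V$, and then one identifies $g_\nu=\bigcup G_\nu$ using Observation \ref{obs:generic}, so $g_\nu$ is the characteristic function of an unbounded co-unbounded subset of $\kappa$.

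Finally I would remark that the passage from $G$ to $\langle g_\nu:\,\nu<\lambda\rangle$ is invertible: $G$ is recovered as the set of $p\in\mathbb{R}(\kappa,\lambda)$ with $p(\nu,i)=g_\nu(i)$ for all $(\nu,i)\in\dom(p)$, since a condition $p$ is compatible with everything in $G$ exactly when it is a subfunction of the "diagonal" function $(\nu,i)\mapsto g_\nu(i)$ — this uses density of the sets deciding $g_\nu\rest\alpha$ for each $\alpha<\kappa$ and $\nu<\lambda$, which follows because given $p$ and a target $\alpha$ one can, coordinate by coordinate on the finitely-supported-in-$\lambda$-at-a-time... rather, on the fewer-than-$\kappa$ relevant coordinates, extend below a club to fill in domain up to $\alpha$. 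I do not expect a serious obstacle here: the only point requiring care is confirming that lifting a single section keeps the global condition $|\dom(p)|<\kappa$ legal and that the sections genuinely satisfy Definition \ref{def:Baum-kappa}, which is immediate from clause (1) of Definition \ref{def:Baum-kappa-lambda}. The statement is essentially bookkeeping, so the proof can reasonably be left as "straightforward" with the projection observation spelled out.
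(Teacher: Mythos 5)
Your overall strategy — take $g_\nu=\bigcup\{p^\nu:\,p\in G\}$, get well-definedness from directedness of $G$, and then prove genericity of each section by a projection/lifting argument — is the natural one (the paper itself offers no proof, treating the statement as bookkeeping). The genuine gap is in the lifting step. You justify that replacing the $\nu$-th section of $p$ by an arbitrary $r\ge p^\nu$ in $\mathbb R(\kappa)$ yields a condition of $\mathbb R(\kappa,\lambda)$ by asserting that ``$r\in\mathbb R(\kappa)$ forces $|\dom(r)|<\kappa$''. That is false: Definition \ref{def:Baum-kappa} only requires $\dom(r)$ to be disjoint from a club, and the remark after it gives only $|\kappa\setminus\dom(r)|=\kappa$; a condition whose domain is, say, the set of successor ordinals has $|\dom(r)|=\kappa$. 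Consequently, under clause (2) of Definition \ref{def:Baum-kappa-lambda} as literally written (the \emph{total} domain of $p$ has size $<\kappa$), your lifted $p'$ need not be a condition and the projection property fails. In fact, with that literal reading the Observation itself breaks: every condition of $\mathbb R(\kappa,\lambda)$ has all sections of size $<\kappa$, so for each $A\in\mathbf V$ of size $\kappa$ and each $h:A\to 2$ in $\mathbf V$ a routine density argument (extend $p$ at some $i\in A$ outside its $\nu$-th domain to disagree with $h$) shows $g_\nu\rest A\notin\mathbf V$; but the set of $r\in\mathbb R(\kappa)$ with $|\dom(r)|=\kappa$ is dense in $\mathbb R(\kappa)$, and any such $r\subseteq g_\nu$ would be exactly such a ground-model restriction. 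So the filter $\{r:\,r\subseteq g_\nu\}$ misses a ground-model dense set; at $\kappa=\aleph_0$ the claim would even say that a Cohen real is Silver-generic.

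Your argument does go through if clause (2) is read as a bound on the number of active coordinates, i.e.\ $|\dom_\lambda(p)|<\kappa$ (or $\le\kappa$), so that each section may be an arbitrary $\mathbb R(\kappa)$-condition: then replacing a single section by any stronger $\mathbb R(\kappa)$-condition leaves the support in $\lambda$ unchanged, the lifting is legitimate, and the usual product/projection lemma gives genericity of each $g_\nu$. (That reading is also the only one compatible with the Fusion Lemma \ref{lemma-Fusion-R-lambda}, whose limit condition at $\delta=\kappa$ has total domain of size $\kappa$.) So you should either adopt and state that reading explicitly, or find a different justification for the lifting; as written, the proof fails exactly at the point where the two readings of Definition \ref{def:Baum-kappa-lambda} diverge. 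The closing paragraph about recovering $G$ from $\langle g_\nu:\,\nu<\lambda\rangle$ is not needed for the statement, and its claim that compatibility with every member of $G$ characterises membership in $G$ would itself require an argument.
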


\begin{definition}\label{def:le_zeta_Rlambda} Suppose that $p\le q\in  \mathbb R(\kappa,\lambda)$ and $\zeta<\kappa$. For
each $\upsilon<\lambda$, let $\kappa\setminus \dom^\upsilon(p)$ be
enumerated increasingly as $\langle \alpha_\zeta^\upsilon:\,\zeta<\kappa\rangle$. We say that $p\le_\zeta q$ if in addition to $p\le q$ we have that
for every $\upsilon\in \dom_\lambda(p)$
\[
\{\alpha_\xi^\upsilon:\,\xi <\zeta\}\subseteq \kappa\setminus \dom^\upsilon(p).
\]
\end{definition}

\begin{lemma}\label{R:closurelambda} $\mathbb R(\kappa, \lambda)$ is $(<\kappa)$-closed, and in fact every increasing sequence 
$\bar{p}=\langle p_i:\,i<i^\ast\rangle$ for any $i^\ast<\kappa$ has the least upper bound which is $\bigcup_{i<i^\ast} p_i$.
\end{lemma}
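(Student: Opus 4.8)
The plan is to mimic the proof of Lemma \ref{R:closure} exactly, since $\mathbb R(\kappa,\lambda)$ differs from $\mathbb R(\kappa)$ only by carrying $\lambda$-many coordinates and by the extra requirement that domains have size $<\kappa$. First I would dispose of the trivial case $\kappa=\aleph_0$, where $i^\ast$ is finite and the union of a finite increasing chain is just its last element, hence clearly the least upper bound. So from now on assume $\kappa>\aleph_0$ and fix an increasing sequence $\bar p=\langle p_i:\,i<i^\ast\rangle$ with $i^\ast<\kappa$, and let $q=\bigcup_{i<i^\ast}p_i$.

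Next I would check that $q$ is actually a condition in $\mathbb R(\kappa,\lambda)$, i.e. that it satisfies the two clauses of Definition \ref{def:Baum-kappa-lambda}. Since $p\le q$ means $p\subseteq q$ for conditions here, the sequence being increasing means the $p_i$ are nested as sets of pairs, so $q$ is a genuine partial function from $\lambda\times\kappa$ to $2$. For clause (2): $|\dom(q)|=|\bigcup_{i<i^\ast}\dom(p_i)|\le |i^\ast|\cdot\sup_i|\dom(p_i)|<\kappa$, using that $\kappa$ is regular, $i^\ast<\kappa$, and each $|\dom(p_i)|<\kappa$. For clause (1): fix $\upsilon<\lambda$; for each $i<i^\ast$ choose (in the ground model) a club $C_i^\upsilon\subseteq\kappa$ disjoint from $\dom^\upsilon(p_i)$, and observe that $\dom^\upsilon(q)=\bigcup_{i<i^\ast}\dom^\upsilon(p_i)$, so $C^\upsilon\deq\bigcap_{i<i^\ast}C_i^\upsilon$ is a club of $\kappa$ (intersection of fewer than $\kappa$ clubs, $\kappa$ regular uncountable) disjoint from $\dom^\upsilon(q)$. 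Hence $q\in\mathbb R(\kappa,\lambda)$.

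Finally I would verify that $q$ is the least upper bound. It is an upper bound because $p_i\subseteq q$ for each $i$. If $r$ is any other upper bound, then $p_i\subseteq r$ for all $i$, hence $q=\bigcup_i p_i\subseteq r$, i.e. $q\le r$; so $q$ is the least. I do not expect any real obstacle: the only points requiring the hypotheses are the cardinal-arithmetic bound $|\dom(q)|<\kappa$ (needing $\kappa$ regular and $i^\ast<\kappa$) and the fact that an intersection of $<\kappa$ clubs is a club (needing $\kappa$ regular uncountable), both of which are standard. The mildest subtlety is just remembering to split off the $\kappa=\aleph_0$ case so that the club-intersection argument, which needs uncountability, is not invoked vacuously.
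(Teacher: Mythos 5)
Your proposal is correct and follows essentially the same route as the paper's proof: dispose of $\kappa=\aleph_0$ trivially, then for $\kappa>\aleph_0$ intersect fewer than $\kappa$ many clubs to see that the union is a condition, and note it is clearly the least upper bound. The only differences are cosmetic — you intersect clubs coordinate-by-coordinate (one club per $\upsilon$) where the paper forms a single club over all $\upsilon\in L$ and $i<i^\ast$ at once, and you explicitly verify the clause $|\dom(q)|<\kappa$, which the paper leaves implicit.
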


\begin{proof} If $\kappa=\aleph_0$, this is trivially true since $i^\ast$ is finite. So, suppose $\kappa>\aleph_0$. Let $L=\bigcup_{i<i^\ast}\dom_\lambda(p_i)$ and for each $\upsilon\in L$ let $C_i^\upsilon$ be a club of $\kappa$ contained in $\kappa\setminus \dom^\upsilon(p_i)$ (setting $\dom^\upsilon(p_i)=\emptyset$ if $\upsilon\notin \dom_\lambda(p_i)$). Since $|L|<\kappa$ and $i^\ast<\kappa$ we have that $\bigcap_{\upsilon\in L, i<i^\ast}C^\upsilon_i$ is a club contained in $\kappa\setminus \bigcup_{i<i^\ast} \dom_\kappa(p_i)$ and therefore $\bigcup_{i<i^\ast} p_i$ is a condition. Clearly, it is the least upper bound of $\bar{p}$.
$\eop_{\ref{R:closurelambda}}$
\end{proof}

\begin{lemma}[{\bf Fusion Lemma}]\label{lemma-Fusion-R-lambda} $\mathbb R(\kappa, \lambda)$ satisfies properties (1)-(3) of Definition \ref{def:propertyB}; and in particular it satisfies {\em Fusion}.
\end{lemma}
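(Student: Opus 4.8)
The plan is to mimic the proof of Lemma~\ref{lemma-Fusion-R} almost verbatim, reducing the product case to the single-coordinate case already handled. First I would note that properties (1) and (2) of Definition~\ref{def:propertyB} are immediate from Definition~\ref{def:le_zeta_Rlambda}: the relations $\le_\zeta$ are transitive, nested in the required way, and at limit $\delta$ we have $\le_\delta=\bigcap_{\zeta<\delta}\le_\zeta$ coordinatewise, hence globally. So the content is {\em Fusion}. Given a limit $\delta\le\kappa$ and a $\delta$-fusion sequence $\bar p=\langle p_\zeta:\,\zeta<\delta\rangle$, set $q=\bigcup_{\zeta<\delta}p_\zeta$, which is a well-defined partial function $\lambda\times\kappa\to 2$ since the sequence is $\le$-increasing. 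If $\delta<\kappa$ we are done by Lemma~\ref{R:closurelambda}, so assume $\delta=\kappa$.

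The remaining task is to verify that $q$ is a condition, i.e.\ that $|\dom(q)|<\kappa$ and that for each $\upsilon<\lambda$ the set $\dom^\upsilon(q)$ avoids a club. As in the proof of Lemma~\ref{lemma-Fusion-R} I would first normalise: replace $\bar p$ by an equivalent sequence in which $p_\zeta=\bigcup_{\xi<\zeta}p_\xi$ at limit $\zeta$ and the $p_\zeta$ are distinct, without changing $q$. For the size bound, observe that $\dom_\lambda(q)=\bigcup_{\zeta<\kappa}\dom_\lambda(p_\zeta)$ has size $\le\kappa$; more carefully one uses the definition of $\le_{\zeta+1}$ to see that passing from $p_\zeta$ to $p_{\zeta+1}$ only adds domain-points among the ``later'' elements of $\kappa\setminus\dom^\upsilon(p_\zeta)$ on each already-present coordinate, while new coordinates may appear. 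The key point is that on each fixed coordinate $\upsilon$, the argument of Lemma~\ref{lemma-Fusion-R} applies: choosing for each $\zeta$ a club $C_\zeta^\upsilon$ avoided by $\dom^\upsilon(p_\zeta)$ with $\xi<\zeta\implies C_\xi^\upsilon\supseteq C_\zeta^\upsilon$ (valid once $\upsilon$ has entered a domain; before that take $C_\zeta^\upsilon=\kappa$), the diagonal intersection $C^\upsilon=\Delta_{\zeta<\kappa}C_\zeta^\upsilon$ is a club, and the contradiction argument there shows $C^\upsilon\subseteq\kappa\setminus\dom^\upsilon(q)$: if $\alpha=\min(C^\upsilon\cap\dom^\upsilon(p_{\zeta+1}))$ then $\alpha\notin C_{\zeta+1}^\upsilon\subseteq C_\zeta^\upsilon$ forces $\alpha<\zeta$, so $\alpha$ is among the first $\zeta$ elements of $\kappa\setminus\dom^\upsilon(p_\zeta)$ and by Definition~\ref{def:le_zeta_Rlambda} stays out of every $\dom^\upsilon(p_\xi)$, $\xi\ge\zeta$, hence out of $\dom^\upsilon(q)$, a contradiction. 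Thus each $\dom^\upsilon(q)$ avoids a club, so $q\in\mathbb R(\kappa,\lambda)$, and $p_\zeta\le_\zeta q$ for every $\zeta<\kappa$ follows as before. That $q$ is the least such condition is clear since any upper bound $r$ in $\le_\zeta$ for all $\zeta$ must contain $\bigcup_\zeta p_\zeta=q$ as a function.

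The one step I expect to need a little care, and hence the main obstacle, is the bookkeeping on $|\dom(q)|<\kappa$ versus $\le\kappa$: in the single-coordinate lemma this issue did not arise, but here new coordinates can be added at every stage $\zeta<\kappa$, so a priori $|\dom_\lambda(q)|$ could be $\kappa$ and then $|\dom(q)|$ could be $\kappa$, violating clause (2) of Definition~\ref{def:Baum-kappa-lambda}. I would resolve this by observing that a genuine fusion sequence of length $\kappa$ in $\mathbb R(\kappa,\lambda)$ is required (as in the product of Sacks forcing with a support constraint) to stabilise its support, or more precisely that the definition of a $\delta$-fusion sequence together with $\le_{\zeta+1}$ forces $\dom_\lambda(p_\zeta)$ to be eventually constant on each tail; alternatively, and more cleanly, one notes that $|\dom(q)|\le\sum_{\zeta<\kappa}|\dom(p_\zeta)|$ and this is $<\kappa$ exactly when the supports stabilise, which is part of what it means to be a fusion sequence here — I would make this precise by the same device used for products of $B(\kappa)$-forcings elsewhere in the paper, namely carrying the coordinate along in the fusion order, so that $p_\zeta\le_{\zeta+1}p_{\zeta+1}$ already pins down $\dom_\lambda(p_{\zeta+1})$ relative to $\dom_\lambda(p_\zeta)$ up to a bounded amount. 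Once that is in place, everything else is the routine transcription described above.
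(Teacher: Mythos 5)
Your treatment of the part the paper actually proves is essentially identical to the paper's argument: properties (1)--(2) are immediate, the case $\delta<\kappa$ is Lemma~\ref{R:closurelambda}, and for $\delta=\kappa$ one runs the club argument of Lemma~\ref{lemma-Fusion-R} on each coordinate (normalised sequence, decreasing clubs $C^\upsilon_\zeta$, diagonal intersection $C^\upsilon$, and the ``first $\zeta$ missing points stay missing'' contradiction). If anything your purely coordinatewise formulation is slightly cleaner, since the paper goes on to intersect all the $C^\upsilon$ into one club $C=\bigcap_{\upsilon\in\dom_\lambda(p)}C^\upsilon$, a step that is only legitimate when there are fewer than $\kappa$ active coordinates.

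The step you single out as the main obstacle --- verifying clause (2) of Definition~\ref{def:Baum-kappa-lambda} for the fusion limit --- is where your proposal has a genuine gap, because neither of your proposed fixes works under the stated definitions. Nothing in Definition~\ref{def:propertyB} or Definition~\ref{def:le_zeta_Rlambda} constrains $\dom_\lambda(p_{\zeta+1})$ relative to $\dom_\lambda(p_\zeta)$: the relation $p\le_\zeta q$ only restricts $q$ on coordinates already in $\dom_\lambda(p)$, so a fusion sequence of length $\kappa$ may introduce a brand-new coordinate (with, say, a single new point of the domain) at every successor stage while leaving the old coordinates untouched; this satisfies $p_\zeta\le_{\zeta}p_{\zeta+1}$ at every step, yet the union has $|\dom(q)|=\kappa$ (and $|\dom_\lambda(q)|=\kappa$ when $\lambda\ge\kappa$). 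So it is simply false that being a fusion sequence ``forces $\dom_\lambda(p_\zeta)$ to be eventually constant,'' and your alternative of carrying a coordinate set along in the fusion order (as with $\le_{C,\zeta}$ in \S\ref{subsec:iteration-general}) changes the orders the lemma is about and hence proves a different statement. To be fair, you have put your finger on a point the paper's own proof silently skips: that proof checks only the club condition on each section and never verifies clause (2), which as literally written ($|\dom(p)|<\kappa$) even makes every section bounded and is not preserved by length-$\kappa$ fusions. But identifying the problem and gesturing at support stabilisation is not a proof of the lemma as stated; a correct write-up has to either read clause (2) as a $\le\kappa$-sized support (in which case the union causes no difficulty) or build support-freezing into the fusion orders, and say which.
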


\begin{proof} Properties (1) and (2) are clearly satisfied. To check {\em Fusion} suppose that $\delta\le \kappa$ is a limit ordinal 
and $\bar{p}$ a $\delta$-fusion sequence. If $\delta=0$, the claim is vacuously true, so assume that $\delta>0$.

Let $p=\bigcup_{\zeta<\delta} p_\zeta$, so by the definition of a fusion sequence, this is a well defined partial function from $\lambda\times\kappa$ to 2. If $\delta<\kappa$,
we are done by Lemma \ref{R:closurelambda}. In the remaining case of $\delta=\kappa$,
it remains to check that for every $\upsilon\in\dom_\lambda(p)$, the set  $\kappa\setminus \dom^\upsilon(p)$ contains a club. 

For $\kappa=\aleph_0$ and any $\upsilon\in\dom_\lambda(p)$ we know that the first $n$ elements of $\omega\setminus \dom^\upsilon(p_n)$ are
in $\omega\setminus \dom^\upsilon(p_m)$ for any $m\ge n$, by the definition of $\le_n$ and $\le_m$.  Hence these elements are in $\omega\setminus \dom^\upsilon(p)$
and in particular, the order type of $\omega\setminus \dom^\upsilon(p)$ is at least $n$. As $n$
was arbitrary, this shows that $\omega\setminus \dom^\upsilon(p)$ is infinite, for any relevant $\upsilon$, as required.

Now suppose $\kappa>\aleph_0$. 
We can without loss of generality change the sequence so that at any limit $\zeta<\kappa$ we have that $p_\zeta=\bigcup_{\xi<\zeta} p_\xi$, since at any rate the set $p$ will remain the same. Also without loss of generality, we may assume that the elements of $\bar{p}$ are distinct, so in particular for any
$\upsilon\in\dom_\lambda(p)$ we have that 
$\langle \dom^\upsilon(p_\zeta):\,\zeta<\kappa\rangle$ is a strictly increasing sequence of sets.

For each $\zeta<\kappa$ and $\upsilon\in \dom_\lambda(p_\zeta)$, let $C_\zeta^\upsilon$ be a club of $\kappa$ avoided by $\dom^\upsilon(p_\zeta)$. We can assume that for any $\upsilon\in \dom_\lambda(p_\zeta)$, we have chosen the clubs $C_\zeta^\upsilon$ for $\zeta<\kappa$ so that
$\xi<\zeta\implies C_\xi^\upsilon\supseteq C_\zeta^\upsilon$. Let $C^\upsilon=\Delta_{\zeta<\kappa}
C_\zeta^\upsilon$ and $C=\bigcap_{\upsilon\in \dom_\lambda(p)} C^\upsilon$, so $C$ is still a club of $\kappa$. We claim that $C\subseteq \kappa\setminus \bigcup_{\upsilon\in \dom_\lambda(p)} \dom^\upsilon(p)$. Suppose, for a contradiction that this is not the case and let $\upsilon\in \dom_\lambda(p)$ and
$\varepsilon<\kappa$ be the first such that $C\cap\dom^\upsilon (p_\varepsilon)\neq \emptyset$. By our assumption that at limit ordinals we have $p_\zeta=\bigcup_{\xi<\zeta} p_\xi$, we can conclude that $\varepsilon$ is a successor ordinal, say $\varepsilon=\zeta+1$. Let $\alpha=\min (C\cap 
\dom(p^\upsilon_{\zeta+1}))$. Therefore $\alpha\notin C^\upsilon_{\zeta+1}\subseteq C^\upsilon_\zeta$. Since $C\setminus \zeta\subseteq C^\upsilon
\setminus \zeta \subseteq C_\zeta^\upsilon$, we must have $\alpha<\zeta$.
So $\alpha$ is among the first $\zeta$ elements of $\kappa\setminus \dom(p^\upsilon_\zeta)$ and by the definition of $\le_\zeta$,  we know that $\alpha$ is outside of
$\dom(p^\upsilon_\xi)$ for any $\xi\ge\zeta$ and hence outside of $\dom^\upsilon(p)$. A contradiction.
$\eop_{\ref{lemma-Fusion-R-lambda}}$
\end{proof}

\begin{lemma}[{\bf Mastering $\kappa$ Lemma}]\label{lemma-mastering-R-lambda} Suppose that 
\begin{enumerate}
\item $\kappa$ is strongly inaccessible, including $\kappa=\aleph_0$ or
\item $\diamondsuit_\kappa$ holds.
\end{enumerate}
Then ${\mathbb R}(\kappa, \lambda)$ has {\em Mastering $\kappa$}.
\end{lemma}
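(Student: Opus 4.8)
The plan is to mirror the proof of Lemma \ref{lemma-mastering-R} for the product, coordinate by coordinate, with the added bookkeeping over $\dom_\lambda$. In both cases we are given $\zeta<\kappa$, a condition $p\in\mathbb R(\kappa,\lambda)$ and a name $\name{\tau}$ with $p\forces``\name{\tau}\mbox{ is an ordinal}"$, and we must produce $q\ge_\zeta p$ and $X\in\mathbf V$ of size $<\kappa$ with $q\forces``\name{\tau}\in X"$.

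For case (1), with $\kappa$ strongly inaccessible: since $|\dom(p)|<\kappa$, by strong inaccessibility the set of all partial functions $g:\dom(p)'\to 2$ with $\dom(p)'$ of size $<\kappa$ and contained in a fixed "stretch" of $\dom(p)$ (chosen, as in the one-coordinate case, so that extending $p$ by such a $g$ lands a condition $\ge_\zeta p$ — concretely, for each $\upsilon\in\dom_\lambda(p)$ take $\gamma^\upsilon$ minimal with $\otp(\gamma^\upsilon\setminus\dom^\upsilon(p))=\zeta$ and work below the product of the initial segments) has size $<\kappa$; enumerate these $\{g_i:i<i^\ast\}$ with $i^\ast<\kappa$. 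I would then run the same transfinite recursion on $i<i^\ast$: build conditions $q_i,r_i$ and ordinals $\alpha_i$ with $q_0=g_0\cup(p\rest\mbox{"the rest"})$, $r_0\ge q_0$ deciding $\name{\tau}=\alpha_0$; at successors $q_{i+1}=g_{i+1}\cup(r_i\rest\mbox{"the rest"})$, $r_{i+1}\ge q_{i+1}$ deciding $\name{\tau}=\alpha_{i+1}$; at limits $\delta$ use Lemma \ref{R:closurelambda} to take the union of $\langle r_i\rest\mbox{"the rest"}:i<\delta\rangle$ (an increasing sequence of conditions by Observation \ref{obvious}, which I would need to restate for $\mathbb R(\kappa,\lambda)$, i.e. any restriction of a condition is a condition), then glue on $g_\delta$. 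Finally $q$ is $p$ restricted to the fixed initial part, unioned with $\bigcup_{i<i^\ast} r_i\rest\mbox{"the rest"}$, and $X=\{\alpha_i:i<i^\ast\}$; the verification $q\ge_\zeta p$ and $q\forces``\name{\tau}\in X"$ is as before, since any $r\ge q$ deciding $\name{\tau}$ has $r$ restricted to the fixed initial part equal to some $g_i$, forcing compatibility with $r_i$.

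For case (2), with $\diamondsuit_\kappa$: I would use the same reduction to a $\diamondsuit_\kappa$-sequence guessing functions into $3$, but now on the product. The cleanest route is to fix an enumeration (in $\mathbf V$) of $\dom_\lambda(p)$ of order type $<\kappa$ and a bijection-style coding so that a single $\diamondsuit_\kappa$-sequence $\langle f_\alpha:\alpha<\kappa\rangle$ with $f_\alpha:\alpha\to 3$ guesses, on stationarily many $\alpha$, the "section at level $\alpha$" of a candidate $r\ge q$ across all $\upsilon\in\dom_\lambda$ simultaneously (possible because $|\dom_\lambda(p)|<\kappa$ and $\kappa^{<\kappa}$-coding issues do not arise: we only need to guess a partial function whose domain meets each $\dom^\upsilon$ in an initial segment). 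Then I would build a fusion sequence $\bar p=\langle p_i:i<\kappa\rangle$ and a club $C=\{\gamma_i:i<\kappa\}$ exactly as in Lemma \ref{lemma-mastering-R}(2), at stage $i+1$ choosing $\gamma_{i+1}$ minimal so that for \emph{every} $\upsilon\in\dom_\lambda(p_i)$ we have $\otp(\gamma_{i+1}\setminus\dom^\upsilon(p_i))\ge i+1$ (this uses $|\dom_\lambda(p_i)|<\kappa$ to keep $\gamma_{i+1}<\kappa$), plugging in the guessed section below $\gamma_{i+1}$ and a decision-making extension above it, so that $p_i\le_{i+1}p_{i+1}$; take unions at limits. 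Let $q$ be the fusion (Lemma \ref{lemma-Fusion-R-lambda}), $X$ the set of decided values. If $r\ge q$ forced $\name{\tau}\notin X$, complete $r$ to a total function into $3$ and use $\diamondsuit_\kappa$ to find $\gamma=\gamma_i\in C$ with the guess correct, whence $r$ and $r_i$ agree below $\gamma$, hence are compatible, contradicting that they decide $\name{\tau}$ differently.

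The main obstacle I anticipate is purely combinatorial bookkeeping in case (2): unlike the single-coordinate forcing, the "stretch" of the domain I must control grows and varies with $\upsilon$, so I must be careful that (a) the $\diamondsuit_\kappa$-sequence can be arranged to guess the whole relevant section of $r$ at level $\gamma$ across all active $\upsilon$ at once — this is where I would lean on $|\dom_\lambda(p)|<\kappa$ and pass to a sufficiently closed set of $\gamma$'s via the club $C^\upsilon=\Delta_{\zeta<\kappa}C_\zeta^\upsilon$ intersected over $\upsilon$, as in the Fusion Lemma — and that (b) the choices $\gamma_{i+1}$ remain $<\kappa$, which needs $|\dom_\lambda(p_i)|<\kappa$ at every stage (true since conditions have domain of size $<\kappa$ and fusion preserves this). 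Everything else is a faithful transcription of the proofs in \S\ref{subsec:R(kappa)} together with Lemma \ref{R:closurelambda} and Lemma \ref{lemma-Fusion-R-lambda}.
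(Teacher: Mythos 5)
Your proposal follows essentially the same route as the paper's proof: in case (1) you enumerate (by inaccessibility) all possible lower parts across the coordinates of $\dom_\lambda(p)$ below the cut-offs $\gamma^\upsilon$, recursively extend to decide $\name{\tau}$, and glue, and in case (2) you use a product-coded $\diamondsuit_\kappa$-sequence to guess the sections of a potential extension on all active coordinates at once, build a fusion sequence with auxiliary deciding conditions, and derive the contradiction via a club of correct-guessing stages. The only deviations are cosmetic (a single uniform $\gamma_{i+1}$ per stage instead of the paper's per-coordinate $\gamma^\upsilon_{i+1}$, and the harmless overstatement $|X|<\kappa$ where $\le\kappa$ is what case (2) yields and what Mastering $\kappa$ requires), so the argument is correct and matches the paper.
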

 
\begin{proof} (1) Given $\zeta<\kappa$,
$p\in {\mathbb R}(\kappa, \lambda)$ and $\name{\tau}$ such that $p\forces``\name{\tau}\mbox{ is an ordinal}"$. 

For each $\upsilon \in  \dom_\lambda(p)$ let  $\gamma^\upsilon<\kappa$ be the minimal $\gamma$
such that ${\rm otp}(\gamma\setminus \dom^\upsilon(p))=\zeta$. We use the assumption on the inaccessibility, so $|\prod_{\upsilon\in  \dom_\lambda(p)} 
{}^{\gamma^\upsilon} 2|<\kappa$, to enumerate
$\prod_{\upsilon\in  \dom_\lambda(p)} {}^{\gamma^\upsilon} 2=\{\rho_i:\,i<i^\ast\}$ for some $i^\ast<\kappa$.  By induction on $i<i^\ast$ we shall choose conditions $q_i$ and $r_i$ and an ordinal 
$\alpha_i$, as follows.

For each $\upsilon\in  \dom_\lambda(p)$ and $i<i^\ast$, let $g_i^\upsilon$ be the $\upsilon$ coordinate of $\rho_i$. For each $\upsilon\in  \dom_\lambda(p)$
let $q_0^\upsilon=g_0^\upsilon\frown p^\upsilon \rest [\gamma^\upsilon,\kappa)$ and let $q_0$ be the product $\prod_{\upsilon\in  \dom_\lambda(p)}
q_0^\upsilon$. Let $r_0\ge_\zeta q_0$ be a condition that forces a value to $\name{\tau}$, which is $\alpha_0$.
Given $r_i$, for $\upsilon\in  \dom_\lambda(p)$ we define $q_{i+1}^\upsilon=g_{i+1}^\upsilon\frown r_i^\upsilon\rest [\gamma^\upsilon,\kappa)$.
Note that $\dom_\lambda(r_i)$ might be larger than $\dom_\lambda(p)$, but for the $\upsilon\in \dom_\lambda(r_i)\setminus \dom_\lambda(p)$ 
we simply let $q_{i+1}^\upsilon= r_i^\upsilon$. Then let
$q_{i+1}=\prod_{\upsilon\in  \dom_\lambda(r_i)} q_{i+1}^\upsilon$. We let $r_{i+1}\ge_\zeta g_{i+1}$ force a value to $\name{\tau}$, which is $\alpha_{i+1}$.

Arriving at a limit $\delta<i^\ast$, we notice that we have chosen $\langle r_i:\,i<\delta\rangle$ so that for every
$\upsilon\in \bigcup_{i<\delta}\dom_\lambda(r_i)$, if $\upsilon\in \dom_\lambda(p)$ then the sequence $\langle r_i^\upsilon \rest [\gamma_\upsilon,\kappa):\,i<\delta \rangle$ is an
increasing sequence of partial functions from $\kappa$ to 2, say with the union ${r}^\upsilon_\delta$, and if $\upsilon\notin \dom_\lambda(p)$ then the sequence $\langle r_i^\upsilon:\, i<\delta, \upsilon\in \dom_\lambda(r_i) \rangle$ is an increasing sequence of partial functions from $\kappa$ to 2, which we again
denote by ${r}^\upsilon_\delta$.

Hence by Lemma \ref{R:closurelambda},
$q_\delta\deq\prod_{\upsilon \in \bigcup_{i<\delta} \dom_\lambda(r_i) } {r}^\upsilon_\delta$ is a condition in $\mathbb R(\kappa, \lambda)$.
We let $r_\delta\ge_\zeta q_\delta$ be a condition that forces a value to $\name{\tau}$, which is $\alpha_\delta$.

At the end of this induction, for $\upsilon\in \dom_\lambda(p)$ we let $q^\upsilon=p^\upsilon \rest \gamma^\upsilon\frown \bigcup_{i<i^\ast} r_i^\upsilon\rest  [\gamma^\upsilon,\kappa)$ and for $\upsilon\in \bigcup_{i<i^\ast}\dom_\lambda(r_i)\setminus \dom(p)$ let $q^\upsilon=\bigcup_{i<i^\ast} r_i^\upsilon$.
Let $q=\prod_{\upsilon\in \bigcup_{i<i^\ast} \dom(r_i) }q^\upsilon$. By the same arguments as in the previous paragraph, $q$ is a condition and $q\ge_\zeta p$ by the choice of $\gamma^\upsilon$ for $\upsilon\in \dom_\lambda(p)$. Let $X=\{\alpha_i:\,i<i^*\}$, so $|X|<\kappa$. We claim that $q\forces``\name{\tau}\in X"$. Otherwise, there is $r\ge q$ which forces that $\name{\tau}$ is not in $X$. We claim that $r$ extends some $r_i$, which will be a contradiction since $r_i$ forces $\name{\tau}$ to be in $X$.

For $\upsilon \in \dom_\lambda(p)$ consider $r^\upsilon\rest\gamma^\upsilon$. The product 
$\prod_{\upsilon \in \dom_\lambda(p)} r^\upsilon\rest\gamma^\upsilon$ must have been
enumerated as some $\rho_i$. So we have that $r\ge r_i$.

\medskip

{\noindent (2)} This proof requires a small twist on how we use $\diamondsuit_\kappa$. Let 
$\langle \prod_{\upsilon\in \dom_\lambda(p)} f^\upsilon_\alpha:\,\alpha<\kappa\rangle$ be a 
$\diamondsuit_\kappa$-sequence such that for each $\upsilon$ we have that $f_\alpha^\upsilon:\,\alpha\to 2$ is a partial function and for every product 
$\prod_{\upsilon\in \dom_\lambda(p)} f^\upsilon$
of partial functions $f^\upsilon:\kappa\to 2$ there is a stationary set of $\alpha<\kappa$ such that for all $\upsilon\in \dom(p)$ we have
$f^\upsilon \rest\alpha=f^\upsilon_\alpha$. To obtain such a $\diamondsuit_\kappa$-sequence from the ordinary $\diamondsuit_\kappa$ sequence, we first use the trick of coding partial functions from $\kappa$ to 2 by full functions from $\kappa$ to 3, which we already used in the proof of Lemma
\ref{lemma-mastering-R}, and then
we use the standard technique of a bijection between ${}^\kappa 3$ and $\prod_{\upsilon\in \dom(p)} {}^\kappa 3$, as referenced in that proof.

We shall construct a fusion sequence $\bar{p}=\langle p_i:\,i<\kappa\rangle$ with $p=p_0$, a set $X=\{\alpha_i:\,i<\kappa\}$
of ordinals and for each $\upsilon\in \dom_\lambda(p)$ a club $C^\upsilon=\{\gamma_i^\upsilon:\,i <\kappa\}$ of $\kappa$, as follows.

Given $p_i$, for each $\upsilon\in\dom_\lambda(p)$, let $\gamma_{i+1}^\upsilon$ be the minimal $\gamma$ with ${\rm otp}(\gamma\setminus \dom^\upsilon(p_i))=i+1$. By construction we shall have $\gamma_{i+1}^\upsilon
>\gamma_i^\upsilon$. For such $\upsilon$ let $q_{i+1}^\upsilon=f_{\gamma_{i+1}}^\upsilon\frown p_i^\upsilon\rest[\gamma_{i+1}^\upsilon, \kappa)$,
and for $\upsilon\in \dom_\lambda(p_i)\setminus \dom_\lambda(p)$ let $q_{i+1}^\upsilon=p_i^\upsilon$. Let $q_{i+1}$ be the product
$\prod_{\upsilon\in \dom_\lambda(p_i)} q_{i+1}^\upsilon$ and
let  $r_{i+1}\ge_{i+1} q_{i+1}$ be a condition that forces a value $\alpha_{i+1}$
to $\name{\tau}$. For $\upsilon\in \dom_\lambda(p)$ let $p_{i+1}^\upsilon= p_i \rest \gamma^\upsilon_{i+1}\frown r_{i+1}^\upsilon\rest [\gamma_{i+1}^\upsilon, \kappa)$ and for $\upsilon\in \dom_\lambda(r_{i+1})\setminus \dom_\lambda(p)$ let $p_{i+1}^\upsilon=r_{i+1}^\upsilon$.
Let $p_{i+1}$ be the product $\prod_{\upsilon\in \dom_\lambda(r_{i+1})} p_{i+1}^\upsilon$. We note that $p_i\le_{i+1} p_{i+1}$.
At a limit $\delta<\kappa$, we proceed as in the case (1), except that at each $\upsilon\in\dom_\lambda(p)$, instead of the fixed $\gamma^\upsilon$ we increase to $\gamma_\delta^\upsilon\deq\sup_{j<\delta}\gamma_j^\upsilon$. 

At the end of this procedure, we let $q$ be the fusion of $\bar{p}$, so we have $q\ge_\zeta p$. We claim that $q\forces``\name{\tau}\in X"$. Otherwise, suppose
that $r\ge q$ forces that $\name{\tau}\notin X$. For each $\upsilon\in \dom_\lambda(p)$, the set $C^\upsilon$ is a club, as promised in the construction.
Let $C\subseteq \bigcap_{\upsilon \in \dom_\lambda(p)} C^\upsilon$ be a club such that for every $\delta\in C$ and $\upsilon \in \dom_\lambda$
we have that $\gamma^\upsilon_\delta=\delta$.
Let $\delta\in C$ be such that for all $\upsilon \in \dom_\lambda(p)$ we gave $r^\upsilon \rest \gamma^\upsilon =f_\delta^\upsilon$ and notice that by
the choice of $\delta$ this means that $r^\upsilon \rest \delta =f_\delta^\upsilon$. Hence $r$ is compatible with $r_\delta$, a constradiction as 
$r_\delta\forces ``\name{\tau}\in X"$.
$\eop_{\ref{lemma-mastering-R}}$
\end{proof}

In conclusion, we have proven the following:

\begin{theorem}\label{th:r(kappa,lambda)} Suppose that 
\begin{enumerate}
\item $\kappa$ is strongly inaccessible, including $\kappa=\aleph_0$ or
\item $\diamondsuit_\kappa$ holds.
\end{enumerate}
Then ${\mathbb R}(\kappa, \lambda)$ has property $B(\kappa)$ and in particular it preserves $\kappa^+$.
\end{theorem}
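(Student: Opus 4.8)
The plan is to read the statement off as the assembly of three results already in hand. First I would confirm that $\mathbb{R}(\kappa,\lambda)$, with the orders $\langle \le_\zeta : \zeta < \kappa \rangle$ introduced in Definition \ref{def:le_zeta_Rlambda}, satisfies clauses (1)--(3) of Definition \ref{def:propertyB}. Clauses (1) and (2)---$\le_0 = \le$, the nesting $\le_\xi \supseteq \le_\zeta$ for $\xi < \zeta$, and $\le_\delta = \bigcap_{\zeta<\delta}\le_\zeta$ at limit $\delta$---are immediate from the definition of $\le_\zeta$ and were already recorded in the proof of Lemma \ref{lemma-Fusion-R-lambda}. Clause (3), \emph{Fusion} for every limit $\delta \le \kappa$, is exactly Lemma \ref{lemma-Fusion-R-lambda}: for $\delta < \kappa$ it follows from $(<\kappa)$-closure (Lemma \ref{R:closurelambda}), and the nontrivial case $\delta = \kappa$ is handled there by taking, on each relevant coordinate $\upsilon$, the diagonal intersection of the clubs avoided by the sets $\dom^\upsilon(p_\zeta)$, and then intersecting over the $<\kappa$-many such $\upsilon$.

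Second, I would invoke the Mastering $\kappa$ Lemma \ref{lemma-mastering-R-lambda}, whose hypotheses are precisely the disjunction in the present statement: when $\kappa$ is strongly inaccessible (including $\kappa = \aleph_0$) one enumerates the $<\kappa$-many possible restrictions $\prod_{\upsilon\in\dom_\lambda(p)} {}^{\gamma^\upsilon} 2$ and, over all of them, fuses conditions deciding a given name $\name{\tau}$ for an ordinal; when instead $\diamondsuit_\kappa$ holds one threads a diamond-guessing sequence (for products of partial functions, coded by functions into $3$) through a fusion of length $\kappa$. Either way clause (4) of Definition \ref{def:propertyB} holds, and since clauses (1)--(4) all now hold, $\mathbb{R}(\kappa,\lambda)$ has Property $B(\kappa)$. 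Finally, the ``in particular'' part is just Lemma \ref{preservation-kappa-plus}: every Property $B(\kappa)$ forcing preserves $\kappa^+$.

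The assembly itself is bookkeeping, so I do not anticipate a genuine obstacle at this stage; whatever difficulty there is has already been absorbed into the cited lemmas---specifically the $\delta = \kappa$ case of Fusion and the $\diamondsuit_\kappa$ case of Mastering $\kappa$. The only points to be mildly careful about are that the $\delta$-fusion sequences treated by Lemma \ref{lemma-Fusion-R-lambda} are literally those required by clause (3) of Definition \ref{def:propertyB}, and that Lemma \ref{lemma-mastering-R-lambda}, proved in the ``name of an ordinal'' formulation, yields clause (4) via the equivalence recorded in Observation \ref{obs:masteq}(2).
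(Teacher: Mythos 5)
Your proposal is correct and follows essentially the same route as the paper, which presents Theorem \ref{th:r(kappa,lambda)} precisely as the assembly of Lemma \ref{lemma-Fusion-R-lambda} (clauses (1)--(3) of Definition \ref{def:propertyB}), Lemma \ref{lemma-mastering-R-lambda} (Mastering $\kappa$ under either hypothesis), and Lemma \ref{preservation-kappa-plus} for the preservation of $\kappa^+$. Your closing remarks about matching the fusion-sequence format of clause (3) and using Observation \ref{obs:masteq}(2) to pass between the two formulations of Mastering $\kappa$ are exactly the right bookkeeping points.
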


\section{Mixed Support Product}\label{sec:mixeds}
In this section we explore another idea of Baumgartner (see \cite[pg.128]{Jec86}). To motivate the idea, let us observe that a main tool in the proof that the ordinary product of $\kappa$ many copies of the perfect with 
respect to a filter $\FF$ on $\kappa$ forcings collapses $\kappa^+$, 
Theorem \ref{claim:collapse}, was to construct a condition in the product where we took larger and larger stems. The {\em pure extension} idea is to
keep the stem fixed. The {\em mixed support} idea is to allow extensions with supports of size $\kappa$ but keeping to a pure extension everywhere but at $<\kappa$-many coordinates. We show how this works in the example of products of the forcing ${\mathbb P}(\FF)$. 

We note that this notion of a mixed product is not covered by the work of Groszek and Jech \cite{zbMATH04187795}, see \S\ref{sec:history} for more about that.

Let us then start with a formal definition. 

\begin{definition}\label{def:P*Flambda} Let $\kappa=\kappa^{<\kappa}$, let $\FF$ be a $(<\kappa)$-complete filter on $\kappa$
and let $\lambda>0$. The {\em mixed support product} ${\mathbb P}^*(\FF,\lambda)$ is the set of all sequences $\bar{p}=\langle p_\iota:\,\iota<\lambda\rangle$
such that:
\begin{itemize}
\item for every $\iota$, we have that $p_\iota$ is a condition in ${\mathbb P}(\FF)$,
\item ${\rm supt}(\bar{p})=\{\iota<\lambda:\, p_\iota\neq \emptyset\}$ is a set of cardinality $\le\kappa$,
\item ${\rm Rsupt}(\bar{p})=\{\iota<\lambda:\, s[p_\iota]\neq \emptyset\}$ is a set of cardinality $<\kappa$.
\end{itemize}
In other words, the conditions in ${\mathbb P}^*(\FF,\lambda)$ are the same as in ${\mathbb P}(\FF,\lambda)$ with supports of size $\le\kappa$, but they
have only $<\kappa$ many coordinates where the stem is non-trivial. 

The order is inherited from ${\mathbb P}(\FF,\lambda)$. The same is true of the orders
$\le_\zeta$ from Definition \ref{gSacks:def-lambda}(2)). 

\end{definition}

\begin{lemma}\label{obs:P*closure} ${\mathbb P}^*(\FF,\lambda)$ is $(<\kappa)$-closed and satisfies {\em Fusion}.
\end{lemma}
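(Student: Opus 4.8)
The plan is to reduce everything to the already-established facts about $\mathbb P(\FF)$ and to the lemmas about the ordinary product $\mathbb P(\FF,\lambda)$, tracking the two support conditions $\supt$ and $\Rsupt$ along the way. The only new content compared to Lemma \ref{P:closurelambda} and Lemma \ref{Sacks:FusionP} is checking that the smaller class of conditions (those with $\Rsupt$ of size $<\kappa$) is closed under the relevant unions/intersections, so the real work is bookkeeping on the supports.

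For $(<\kappa)$-closure, I would take an increasing sequence $\bar p=\langle p^{(i)}:\,i<i^\ast\rangle$ with $i^\ast<\kappa$ and set $q=\bigcup\bar p$ as in Notation \ref{union-not}, i.e.\ $q_\iota=\bigcap_{i<i^\ast}p^{(i)}_\iota$ for each $\iota$. By Lemma \ref{P:closurelambda} this is a condition of $\mathbb P(\FF,\lambda)$ and it is the lub of $\bar p$ in that order; it remains only to see it lies in $\mathbb P^*(\FF,\lambda)$. The support bound is immediate since $\supt(q)\subseteq\bigcup_{i<i^\ast}\supt(p^{(i)})$, a union of $<\kappa$ sets each of size $\le\kappa$, hence of size $\le\kappa$. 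For the reduced support, note that for $\iota\notin\Rsupt(p^{(i)})$ for any $i$ we still might worry the stem grows under intersection, but in fact $q_\iota=\bigcap_i p^{(i)}_\iota\le p^{(i)}_\iota$ for each $i$, so $s[q_\iota]\supseteq s[p^{(i)}_\iota]$; since the sequence is $\le$-increasing the stems $s[p^{(i)}_\iota]$ are themselves increasing, and one checks (as in Lemma \ref{when-closed}) that $s[q_\iota]$ equals their union. Hence $s[q_\iota]\neq\emptyset$ implies some $s[p^{(i)}_\iota]\neq\emptyset$, so $\Rsupt(q)\subseteq\bigcup_{i<i^\ast}\Rsupt(p^{(i)})$, a union of $<\kappa$ sets each of size $<\kappa$, which has size $<\kappa$ because $\kappa=\kappa^{<\kappa}$ is regular. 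Thus $q\in\mathbb P^*(\FF,\lambda)$, giving $(<\kappa)$-closure, indeed with exact least upper bounds.

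For \emph{Fusion}, given a limit $\delta\le\kappa$ and a $\delta$-fusion sequence $\bar p=\langle p^{(\zeta)}:\,\zeta<\delta\rangle$ in $\mathbb P^*(\FF,\lambda)$, I would again take $q=\bigcup\bar p$ and invoke Lemma \ref{Sacks:FusionP} to get that $q$ is a condition of $\mathbb P(\FF,\lambda)$ with $p^{(\zeta)}\le_\zeta q$ for all $\zeta<\delta$ and that it is the least such. Closure under the support conditions is verified exactly as above: $\supt(q)\subseteq\bigcup_{\zeta<\delta}\supt(p^{(\zeta)})$ has size $\le\kappa$ since $\delta\le\kappa$ and each piece has size $\le\kappa$; and for the reduced support, the stem of a fusion limit at coordinate $\iota$ is (by the coordinatewise Fusion Lemma \ref{Sacks:Fusion}) the union of the stems $s[p^{(\zeta)}_\iota]$, so $\Rsupt(q)\subseteq\bigcup_{\zeta<\delta}\Rsupt(p^{(\zeta)})$. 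The only subtlety is the case $\delta=\kappa$: then this is a union of $\kappa$ sets each of size $<\kappa$, which a priori could have size $\kappa$. Here one uses that the sequence is a fusion sequence, so the stems change in a controlled way — more precisely, $\Rsupt(p^{(\zeta)})$ is non-decreasing and one argues that no new reduced-support coordinate can appear at stage $\zeta$ unless it was already present earlier, because $\le_\zeta$-extensions do not enlarge stems at coordinates outside the current reduced support in an unbounded fashion; combined with regularity of $\kappa$ this keeps $\Rsupt(q)$ of size $<\kappa$.

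The main obstacle I anticipate is precisely that last point: showing $\Rsupt(q)$ stays of size $<\kappa$ when $\delta=\kappa$. In the ordinary product this is a non-issue because there is no reduced-support constraint, and the whole design of the mixed support is to make fusions of length $\kappa$ respect it. The clean way to handle it is to build into the definition of fusion sequence (or to observe it follows from $p^{(\zeta)}\le_{\zeta+1}p^{(\zeta+1)}$ together with the structure of $\le_{\zeta+1}$ on $\mathbb P(\FF)$) that $\Rsupt(p^{(\zeta)})$ can grow by at most a bounded amount at each successor step and does not jump at limits, so that $\Rsupt(q)=\bigcup_{\zeta<\kappa}\Rsupt(p^{(\zeta)})$ is an increasing union which, by regularity of $\kappa$ and the fact that each term has size $<\kappa$ while the "new" coordinates added form a set whose union over all $\zeta$ is still of size $<\kappa$, remains of size $<\kappa$; if the definition as given does not literally guarantee this, I would note it as a standing assumption on fusion sequences in $\mathbb P^*(\FF,\lambda)$, consistent with how such mixed-support fusions are used later in the iteration.
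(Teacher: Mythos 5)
Your overall strategy -- reduce to Lemmas \ref{P:closurelambda}, \ref{Sacks:FusionP} and \ref{when-closed} and then do bookkeeping on $\supt$ and $\Rsupt$ -- is the same as the paper's, and the closure part is essentially fine. One small slip there: the stem of the coordinatewise intersection need not equal the union of the stems (it can be strictly longer, e.g.\ when the successor sets of the union of the stems shrink to a singleton along the sequence); but the implication you actually need -- that a coordinate at which every $s[p^{(i)}_\iota]$ is empty still has empty stem in the lub -- does hold, because the root's successor set in $\bigcap_i p^{(i)}_\iota$ is an intersection of fewer than $\kappa$ many sets in $\FF$, hence in $\FF$ by $(<\kappa)$-completeness. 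So $\Rsupt(q)\subseteq\bigcup_{i<i^\ast}\Rsupt(p^{(i)})$ is correct, just not for the reason you give.

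The genuine gap is exactly at the point you flag yourself: the bound $|\Rsupt(q)|<\kappa$ for a fusion limit of length $\delta=\kappa$. You assert that $\le_\zeta$-extensions ``do not enlarge stems \dots in an unbounded fashion'' without any argument, and your fallback -- ``I would note it as a standing assumption on fusion sequences'' -- is not a proof of the lemma as stated, since the Fusion property of Definition \ref{def:propertyB} quantifies over all fusion sequences. The paper closes this with a concrete observation about the orders on ${\mathbb P}(\FF)$: if $p\le_\zeta q$ with $\zeta\ge 1$, then every node of $p$ with empty splitting history, that is $s[p]$ and its initial segments (and, once $\zeta\ge 2$, all immediate successors of $s[p]$ in $p$, which keeps $s[p]$ an $\FF$-splitting node of $q$), must belong to $q$; so along the tail of a fusion sequence the stem at each coordinate already in the support cannot change, hence $\Rsupt(\bar p_\zeta)$ is fixed and the reduced support of the limit has size $<\kappa$. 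This coordinatewise stem-preservation under $\le_\zeta$ is the entire content of the lemma beyond the product lemmas, and it is precisely what your write-up omits. (Your unease is not unfounded -- coordinates that newly enter the support at a later stage are not constrained by the coordinatewise $\le_\zeta$, and the paper's ``$\Rsupt$ is fixed'' silently covers them -- but a proof must at least establish the stem-preservation fact for the coordinates already present, rather than postulating the conclusion as an added hypothesis.)
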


\begin{proof} For the closure, note that the limit of an increasing ${\mathbb P}^*(\FF,\lambda)$-sequence of length $<\kappa$ is still an element of 
${\mathbb P}^*(\FF,\lambda)$, so a common ${\mathbb P}^*(\FF,\lambda)$-extension of the sequence.

For the Fusion, we note that if $p\le_\zeta q$ in ${\mathbb P}(\FF)$ and $\zeta\ge 1$, then in particular the root $s[p]$ of $p$ is the same as the root of
$q$. So if we have a fusion sequence $\langle \bar{p}_\zeta:\,\zeta<\kappa\rangle$ then ${\rm Rsupt}(\bar{p}_\zeta)$ is fixed. Hence the limit of the fusion sequence is an element of ${\mathbb P}^*(\FF,\lambda)$.
$\eop_{\ref{obs:P*closure}}$
\end{proof}

\begin{theorem}\label{the:pres-kappa+-mixed}  The product ${\mathbb P}^*(\FF,\lambda)$ satisfies {\em Mastering} $\kappa$.
\end{theorem}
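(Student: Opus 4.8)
The plan is to mimic the \emph{Mastering $\kappa$} proof for the single forcing ${\mathbb P}(\FF)$ (Lemma~\ref{Miller-density}), but carrying out a \emph{partial fusion} on the restricted support and a plain $(<\kappa)$-closed amalgamation on the rest, exploiting the key structural feature of ${\mathbb P}^*(\FF,\lambda)$ that the restricted support of a condition (the coordinates with nontrivial stem) has size $<\kappa$. So suppose $\zeta<\kappa$, $\bar{p}\in{\mathbb P}^*(\FF,\lambda)$ and $\bar{p}\forces``\name{\tau}$ is an ordinal$"$. Set $a=\supt(\bar{p})$ (size $\le\kappa$) and $b=\Rsupt(\bar{p})$ (size $<\kappa$). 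First I would deal with the coordinates in $b$ by a genuine fusion on a single coordinate at a time, or rather, since $|b|<\kappa=\kappa^{<\kappa}$, by simultaneously taking, for each $\iota\in b$, the $\zeta$-front of $p_\iota$ and forming the product of all these fronts; because $|b|<\kappa$ and each front has size $\le\kappa$, the product of fronts $\prod_{\iota\in b}\{s_\alpha^\iota\}$ still has size $\le\kappa$. Enumerate it as $\langle \bar{s}_j:\,j<j^\ast\rangle$ with $j^\ast\le\kappa$.

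Next I would run an induction of length $j^\ast$ that, for each tuple $\bar{s}_j$ of $\zeta$-level splitting nodes (one in each coordinate of $b$), first passes to the condition obtained by restricting each $p_\iota$ ($\iota\in b$) below the relevant successor $\bar{s}_j^{\,\iota}\frown i$, then chooses a condition $r_j$ above this that decides $\name{\tau}$, say as $\gamma_j$. The subtlety is that $r_j$ may have larger support than $\bar p$ both in $\supt$ and in $\Rsupt$; but by Lemma~\ref{obs:P*closure} ${\mathbb P}^*(\FF,\lambda)$ is $(<\kappa)$-closed, and $j^\ast\le\kappa$, so I would build the $r_j$ increasing along $j$ (taking unions at limits using $(<\kappa)$-closure, which is legitimate because each limit has length $<\kappa$) while being careful that the union of the $\Rsupt$'s stays of size $<\kappa$ — this needs that at each step we only add $<\kappa$ new restricted-support coordinates and there are $\le\kappa$ steps, so a priori the union could be of size $\kappa$; hence I would instead do the fusion correctly coordinate-by-coordinate on the $b$-part, \emph{keeping the stems on $b$ fixed from stage $j$ on} (this is exactly the point of the $\le_{\zeta+1}$-amalgam in Notation~\ref{not:amalgam}: the resulting $r$ has the \emph{same} stem as $p$), so that $\Rsupt$ is not enlarged at all. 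Then take $X=\{\gamma_j:\,j<j^\ast\}$, of size $\le\kappa$, and let $q=\bigcup_{j<j^\ast} r_j$ on the $a\setminus b$ and new coordinates (using $(<\kappa)$-closure for the genuine limits of length $<\kappa$, and using Fusion for the coordinates in $b$ which now form a fusion sequence of length $\le\kappa$). One checks $q\ge_\zeta \bar p$: on coordinates of $b$ this is the $(\zeta+1,\name\tau)$-amalgam property, on the other coordinates $q$ equals $\bar p$ there (or extends the trivial stem harmlessly).

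Finally I would verify $q\forces``\name\tau\in X"$: if $r\ge q$ decides $\name\tau$, then on each $\iota\in b$ the stem of $r^\iota$ passes through some node of level $\ge\zeta$ in $q^\iota=$ the amalgam, hence through some $\bar s_j^{\,\iota}\frown i$; collecting these across $\iota\in b$ gives a single index $j$ with $r$ compatible with $r_j$ (on $b$ by choice of $j$, and on the other coordinates because $q$, hence $r$, extends $r_j$ there by construction), so the value $r$ forces to $\name\tau$ must be $\gamma_j\in X$.

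\textbf{Main obstacle.} The one genuinely delicate point — and the reason the mixed support is needed — is the bookkeeping on the restricted support: a naive partial-fusion-plus-amalgamation would, over $\kappa$-many amalgamation steps, threaten to push $\Rsupt$ up to size $\kappa$ and leave ${\mathbb P}^*(\FF,\lambda)$. The fix is to exploit that the $(\zeta+1,\name\tau)$-amalgam of a condition has the \emph{same stem} as that condition (so it does not enlarge $\Rsupt$ on the amalgamated coordinates), and to confine all the ``getting a larger stem'' activity to the coordinates already in $b$, while on all other coordinates we only ever move in the pure/fusion orders — which never create new restricted-support coordinates. Once this is set up correctly the rest is a routine translation of the single-forcing argument, combined with Lemma~\ref{obs:P*closure} for closure and Fusion, and Lemma~\ref{Miller-density} (via Notation~\ref{not:amalgam}) for the per-coordinate amalgamation.
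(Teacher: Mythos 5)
There is a genuine gap, and it is located exactly where you place your ``fix'': you confine all amalgamation to the set $b=\Rsupt(\bar p)$ and insist that on every other coordinate one only ever moves in the pure/fusion orders, ``so that $\Rsupt$ is not enlarged at all.'' But the name $\name{\tau}$ can depend on coordinates outside $b$ (for instance $\name{\tau}$ can be the value at $0$ of the generic function added at some $\iota_0\in\supt(\bar p)\setminus\Rsupt(\bar p)$, or at a coordinate outside $\supt(\bar p)$ altogether); any condition deciding such a name \emph{must} have a nontrivial stem at $\iota_0$. So your deciding conditions $r_j$ cannot be chosen with restricted support inside $b$, and the amalgam trick of Notation~\ref{not:amalgam} does not rescue this: it restores the stem only at the coordinates over whose fronts you actually amalgamate, i.e.\ those in $b$, and does nothing at $\iota_0$. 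Consequently the final step collapses: either $q$ does not exist as described (it cannot extend every $r_j$ at $\iota_0$ when different $r_j$'s have incompatible stems there, and the coordinatewise union of the $r_j$'s at such a coordinate need not be a condition), or, if $q$ is just $\bar p$ on the non-$b$ coordinates, an extension $r\ge q$ deciding $\name{\tau}$ can have all its nontrivial stems inside $\supt(\bar q)\setminus b$, at coordinates whose fronts were never enumerated, and then there is no $j$ for which $r$ is compatible with $r_j$. (Two further problems: an arbitrary deciding $r\ge q$ need not have stems reaching level $\ge\zeta$ on the coordinates of $b$, so even the choice of the index $j$ is unjustified; and on the non-$b$ coordinates your $r_j$'s increase only in $\le$ over up to $\kappa$ steps, so the final, length-$\kappa$ union is covered neither by Lemma~\ref{obs:P*closure} nor by Fusion — this is precisely the failure mode that makes the plain product collapse $\kappa^+$ in Theorem~\ref{claim:collapse}.)

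The paper's proof is structured to avoid exactly this. It builds a genuine fusion sequence $\langle\bar p_\alpha:\alpha<\kappa\rangle$ of length $\kappa$ together with an increasing \emph{continuous} sequence $\langle F_\alpha:\alpha<\kappa\rangle$ of sets of size $<\kappa$ whose union, via a pairing-function bookkeeping, is all of $\supt(\bar p)$ — not just $\Rsupt(\bar p)$. At stage $\alpha+1$ it amalgamates over all $f:F_{\alpha+1}\to\kappa$, choosing \emph{when possible} deciding conditions $\bar r_{\alpha,f}$ subject to $\Rsupt(\bar r_{\alpha,f})\subseteq F_{\alpha+1}$, and accumulates $X=\bigcup_{\alpha<\kappa}X_\alpha$ of size $\le\kappa$; every support coordinate, once it enters some $F_{\alpha+1}$, is from then on extended only in the orders $\le_{\zeta+\alpha}$, so the coordinatewise sequences really are fusion sequences. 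The mixed-support hypothesis is then used at the very end, and in the opposite direction from your use of it: given an arbitrary $\bar r\ge\bar q$ deciding $\name{\tau}$, the set $\Rsupt(\bar r)\cap\supt(\bar q)$ has size $<\kappa$ and hence (by regularity of $\kappa$ and continuity of the $F_\alpha$'s) is contained in some $F_{\alpha+1}$, and it is at that stage that a matching $\bar r_{\alpha,f}$ was considered. In short, the small restricted support that must be ``caught'' is that of the unknown extension $\bar r$, not the fixed set $\Rsupt(\bar p)$; this is why the amalgamation has to sweep through the whole support along a length-$\kappa$ fusion, and a single amalgamation round over $\Rsupt(\bar p)$ cannot suffice.
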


\begin{proof} We shall use the version of {\em Mastering} $\kappa$ from Observation \ref{obs:masteq}(2). Suppose then that $\bar{p}\forces``\name{\tau} \mbox{ is an ordinal}"$ in ${\mathbb P}^*(\FF,\lambda)$. Therefore it suffices to prove the following lemma.

\begin{lemma}\label{lem:puremast} Under the above circumstance, there is $\bar{q}\ge_\zeta \bar{p}$ and a set of ordinals $X\in {\mathbf V}$ of size $\le\kappa$, such that $\bar{q}\forces ``\name{\tau}\in X"$.
\end{lemma}

\begin{proof}[{\bf Proof of the Lemma}]By induction on $\alpha<\kappa$ we shall choose 
\begin{itemize}
\item a sequence $\langle \bar{p}_\alpha:\,\alpha<\kappa\rangle$  with $\bar{p}=\bar{p}_0$ and such that $\bar{p}_\alpha \le_{\zeta+\alpha} \bar{p}_{\alpha+1}$
(hence, this sequence has a fusion $\bar{q}$, which is an element of ${\mathbb P}^*(\FF,\lambda)$ and which satisfies $\bar{q}\ge_\zeta \bar{p}$),
\item an increasing sequence $\langle F_\alpha:\,\alpha<\kappa\rangle$ of sets of size $<\kappa$ such that $\bigcup_{\alpha<\kappa} F_\alpha=
\bigcup_{\alpha<\kappa} {\rm supt}(\bar{p}_\alpha)$, and
\item an increasing sequence $\langle X_\alpha:\,\alpha<\kappa\rangle$ of sets of size $<\kappa$ such that, once $\bar{q}$ is constructed, we shall have
\[
\bar{q}\forces ``\name{\tau}\in X=\bigcup_{\alpha<\kappa} X_\alpha".
\]
\end{itemize}
The main idea of the induction is similar to Baumgartner's proof in  \cite{zbMATH03926906} of the fact that the product with countable supports of Sacks forcing preserves $\aleph_1$, where we use the method of the proof of Lemma \ref{Miller-density} at each of the $<\kappa$-many active coordinates.

Let us use the notation $\bar{p}_\alpha=\langle p^\alpha_\iota:\,\iota<\lambda\rangle$ and ${\rm supt}(\bar{p}_\alpha)=\{\iota^\alpha_\upsilon:\,\upsilon<\kappa\}$.
Let ${\rm pr}:\, \kappa\to \kappa\times\kappa$ be a bijection such that ${\rm pr}(\alpha)=(\beta, \upsilon)\implies \beta\le\alpha$.
We shall add to our inductive hypothesis the following item
\begin{itemize}
\item if ${\rm pr}(\alpha)=(\beta, \upsilon)$ then $\iota^\beta_\upsilon\in F_{\alpha+1}$
(hence at the end of the induction we shall have $\bigcup_{\alpha<\kappa} F_\alpha \subseteq 
\bigcup_{\alpha<\kappa} {\rm supt}(\bar{p}_\alpha)$, and the construction will assure that actually $\bigcup_{\alpha<\kappa} F_\alpha =
\bigcup_{\alpha<\kappa} {\rm supt}(\bar{p}_\alpha)$).
\end{itemize}
To do the induction, the \underline{$\alpha=0$} case is given by $\bar{p}=\bar{p}_0$ and $F_0=\emptyset$.

Coming to the stage \underline{$\alpha+1$}, we let $F_{\alpha+1}=F_\alpha\cup \{\iota^\beta_\upsilon\}$ where ${\rm pr}(\alpha)=(\beta,\upsilon)$. 
An inductive assumption is that $|F_\alpha|<\kappa$. We construct $\bar{p}_{\alpha+1}$
as what could be called an $(\zeta+\alpha+1,F_{\alpha+1}, \name{\tau})$-amalgam over $\bar{p}_{\alpha}$ (see Notation \ref{not:amalgam}),
as well as the set $X_\alpha$, as follows. For each $f:\,F_{\alpha+1}\to\kappa$ we choose if possible a condition
$\bar{r}_{\alpha,f}$ in ${\mathbb P}^*(\FF,\lambda)$ which satisfes:
\begin{itemize}
\item ${\rm Rsupt}(\bar{r}_{\alpha,f})\subseteq F_{\alpha+1}$,
\item for every $\iota\in F_{\alpha+1}$, there is $s_\iota \in p^\alpha_\iota$ such that $\otp_{{\rm deg}_{p^\alpha_\iota}(s_\iota)}=\zeta+\alpha$ and $s_\iota\frown f(\iota)\in p^\alpha_\iota$, while $r^{\alpha,f}_\iota\ge_{\zeta+\alpha} {(p^\alpha_\iota})_{s_\iota\frown f(\iota)}$ in ${\mathbb P}(\FF)$,
\item $\bar{r}_{\alpha,f}$ decides a value of $\name{\tau}$, say $\gamma^{f}$.
\end{itemize}
For each $\iota\in F_{\alpha+1}$ let $p^{\alpha+1}_\iota=\bigcap_{f \in {}^{F_{\alpha+1}}\kappa}r^{\alpha,f}_\iota\cap p^{\alpha+1}_\iota$ and let
$p^{\alpha+1}_\iota=p^{\alpha}_\iota$ for $\iota\in {\rm supt}(\bar{p}_{\alpha}\setminus F_{\alpha+1})$. Note that we have $\bar{p}_{\alpha+1}
\ge_{\zeta+\alpha+1} \bar{p}_{\alpha}$.

Let $X_\alpha=\{\gamma^f:f \in {}^{F_{\alpha+1}} \kappa\}$. Note that since $|F_{\alpha+1}|<\kappa$ and
$\kappa^{<\kappa}=\kappa$,
the number of relevant functions $f$ is at most $\kappa$ and so $|X_\alpha|\le\kappa$. 

At a stage \underline{$\alpha>0$ a limit ordinal} we let $F_\alpha=\bigcup_{\beta<\alpha} F_\beta$, $X_\alpha=\bigcup_{\beta<\alpha} X_\beta$
and $\bar{p}_\alpha$ the limit of the fusion sequence $\langle \bar{p}_\beta:\,\beta<\alpha\rangle$. 

At the end of the induction we have assured that the fusion $q$
of the sequence $\langle \bar{p}_\alpha:\,\alpha<\kappa\rangle$ is well defined.
We claim that $\bar{q}\forces_{{\mathbb P}^*(\FF,\lambda)}``\name{\tau}\in X"$. So let $\bar{r}\ge_{{\mathbb P}^*(\FF,\lambda)}\bar{q}$
be a condition that forces a value to $\name{\tau}$. By the definition of ${\mathbb P}^*(\FF,\lambda)$ we have that 
${\rm Rsupt}(\bar{r})\cap {\rm Rsupt}(\bar{q})$ is some subset of size $<\kappa$ of ${\rm supt}(\bar{q})$, hence there is $\alpha<\kappa$
such that ${\rm Rsupt}(\bar{r})\cap {\rm Rsupt}(\bar{q})\subseteq F_{\alpha+1}$. 
We observe that $\bar{r}$ is compatible with one of the conditions $\bar{r}_{\alpha,f}$ considered at the stage $\alpha+1$
of the construction, and hence the value forced by $\bar{r}$ to $\name{\tau}$ is in the set $X_\alpha\subseteq X$. 
$\eop_{\ref{lem:puremast}}$
\end{proof}
$\eop_{\ref{the:pres-kappa+-mixed}}$
\end{proof}

\section{Iteration with Support of Size $\le\kappa$ of Property $B(\kappa)$ Forcing}\label{subsec:iteration-general} We assume that $\kappa$ 
satisfies $\kappa^{<\kappa}=\kappa$. We shall prove a conditional 
iteration theorem for a property $B(\kappa)$ forcing, with three requirements on individual forcings - essentially meaning that we have property $B(\kappa)$ in a 
definable way- and the fourth requirement
on the iteration itself. The conditions in question are:
\begin{enumerate}
\item a strong $(<\kappa)$-closure of the individual forcings, which is  a closure witnessed by a function (to be explained in detail below), 
\item a strong fusion of the individual forcings, which is a fusion witnessed by a function (to be explained in detail below),
\item a strengthening of Mastering $\kappa$ of the individual forcings, which we shall call {\em $B-\kappa$-properness}, and
\item the property of the iteration which we shall call {\em amalgamation}. This corresponds to the idea of being able to make decisions in $\le$ 
by extensions which are in addition $\le_\zeta$ on any given set of $<\kappa$-many coordinates.
\end{enumerate}
While the first three properties are iterable, given the last one, the last property is not iterable and hence we have to study separately how to achieve that property by the properties of the individual forcing and the iteration itself. In \S\ref{sec:whichones}, we prove that an iteration of forcing of type ${\mathbb P}(\FF)$
with supports of size $\le\kappa$ satisfies the conditions above.

We now give the definitions of the properties we consider, in the order announced, starting with the strong closure.

\begin{definition}\label{def:strongclosure} We say that a forcing notion ${\mathbb P}$ is {\em strongly $(<\kappa)$-closed} if there is a function 
$f$ which acts on increasing sequences of length $<\kappa$ of conditions in ${\mathbb P}$  and which has the property that for every 
such sequence $\bar{p}=\langle p_i:\,i<i^\ast\rangle$ where $i^\ast<\kappa$ we have that $f(\bar{p})$ is a common upper bound to $\bar{p}$.

In such circumstances we say that $f$ {\em witnesses} the strong  $(<\kappa)$-closure of ${\mathbb P}$.
\end{definition}

\begin{example}\label{ex-strongclosure}
(1) Lemma \ref{when-closed} proved that the perfect set forcing ${\mathbb P}(\FF)$ with respect to a $(<\kappa)$-complete filter is strongly 
$(<\kappa)$-closed, where for the value of the function $f$ applied to an increasing sequence $\langle p_i:\,i< i^\ast\rangle$ we take the intersection
$\bigcap_{i<i^\ast} p_i$. 

{\noindent (2)} Lemma \ref{R:closure} proved that the forcing ${\mathbb R}(\kappa)$ is strongly $(<\kappa)$-closed. 

{\noindent (3)} Any product of 
strongly $(<\kappa)$-closed forcing is strongly $(<\kappa)$-closed, as we can define the function witnessing the strong $(<\kappa)$-closure co-ordinatwise.
\end{example}

Next is the property of strong fusion.

\begin{definition}\label{def:strongfusion} We say that a forcing notion ${\mathbb P}$ together with a sequence
$\langle \le_\zeta:\,\zeta<\kappa\rangle$ of orders witnessing fusion satisfies {\em strong fusion}, if there is a function 
$t:\,{}^{\le \kappa} {\mathbb P}\to {\mathbb P}$
with the properties described below.

For any $\delta\le \kappa$ a limit ordinal, a sequence $\langle p_\zeta:\,\zeta\in [\zeta^\ast, \delta)\rangle$ and a $\zeta^\ast<\delta$ such that:
\begin{itemize}
\item $p_\zeta \le_\zeta p_{\zeta+1}$ for all $\zeta \in [\zeta^\ast, \delta)$,
\item for any $\xi\in [\zeta^\ast, \delta)$ a limit ordinal, the condition $p_\xi$ is the value of $t(\langle p_\zeta:\,\zeta\in [\zeta^\ast, \xi)\rangle)$,
\end{itemize}
the condition $t(\langle  p_\zeta:\,\zeta\in [\zeta^\ast, \delta)\rangle)$ is the fusion limit of $\langle p_\zeta:\,\zeta \in [\zeta^\ast, \delta)\rangle$.

\end{definition}

\begin{example}\label{ex-strongfusion} (1) Lemma \ref{Sacks:Fusion} shows that the forcing $\mathbb P(\FF)$ has strong fusion, where 
\[
t(\langle  p_\zeta:\,\zeta\in [\zeta^\ast, \delta)\rangle)=\bigcap_{\zeta\in [\zeta^\ast, \delta)} p_\zeta.
\]

{\noindent (2)} The proof of Lemma \ref{lemma-Fusion-R} does not give that the forcing $\mathbb R(\kappa)$ satisfies strong fusion. Namely, the fusion procedure depends on the choice of clubs. This could be remedied with choosing these clubs in some definable way, but this is of less interest for us here since we do not have the amalgamation property for this forcing.

{\noindent (3)} Lemma \ref{Sacks:FusionP} showed that we have strong fusion for the product of $\mathbb P(\FF)$ forcings. In general, the same argument shows that strong fusion carries to any product of strong fusion forcing, as we can define the witnessing functions coordinatwise.

\end{example}

We shall use the following strengthening (see Lemma \ref{implication-properness}) of {\em Mastering $\kappa$}. 

In the following we use $\chi$ for a regular cardinal large enough so that all the objects mentioned in our proof are elements of $\HH(\chi)$. 

\begin{definition}\label{def:B-kappa-properness} We say that a forcing notion $\mathbb P$ is {\em $B-\kappa$-proper} if
there exists a sequence $\langle \leq_\zeta:\,\zeta<\kappa\rangle$ of partial orders on $P$, satisfying 
\begin{enumerate}
\item  \(\leq_0 = \leq\), and $\xi<\zeta\implies \le_\xi\supseteq \le_\zeta$, 
    \item  for $\delta$ limit $\in (0,\kappa)$, we have $\le_\delta=\bigcap_{\zeta<\delta} \le_\zeta$,
    \item there is some $y\in H(\chi)$ such that
\begin{itemize}
\item whenever $p\in \mathbb P$, $\zeta^\ast<\kappa$, $N\prec H(\chi)$ with $|N|=\kappa$ and $N^{<\kappa}\subseteq N$ are such that $p, y,  \mathbb P,\langle \le_\zeta:\,\zeta<\kappa\rangle \in N$,
\end{itemize}
then there is $q\ge_{\zeta^\ast} p$ which is $\mathbb P$-generic over $N$.
\end{enumerate}
\end{definition}

Notice that the assumptions on $N$ above imply that $\kappa$, definable as the length of $\langle \le_\zeta:\,\zeta<\kappa\rangle$, is an element of $N$
and then also a subset of $N$, and in fact $\kappa^{<\kappa}\subseteq N$.

\begin{lemma}\label{implication-properness} Suppose that $\mathbb P$ is a forcing notion satisfying 
the first two items in the definition of property $B(\kappa)$,
Definition \ref{def:propertyB}, as witnessed by
$\langle \le_\zeta:\,\zeta<\kappa\rangle$ and that it is $B-\kappa$-proper. Then it satisfies {\em Mastering $\kappa$}.
\end{lemma}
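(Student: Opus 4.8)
The plan is to derive \emph{Mastering $\kappa$} in the equivalent form of Observation \ref{obs:masteq}(2): given $p\in\mathbb P$, $\zeta<\kappa$ and a name $\name\tau$ with $p\forces``\name\tau$ is an ordinal$"$, we must produce $q\ge_\zeta p$ and a set $X\in\mathbf V$ of size $\le\kappa$ with $q\forces``\name\tau\in X"$. The natural move is to pass to a suitable elementary submodel. First I would fix a regular $\chi$ large enough that $\mathbb P$, $\langle\le_\zeta:\zeta<\kappa\rangle$, $p$ and $\name\tau$ all lie in $H(\chi)$, and choose $N\prec H(\chi)$ with $|N|=\kappa$, $N^{<\kappa}\subseteq N$, $\kappa\subseteq N$, and $p,(\mathbb P,\langle\le_\zeta\rangle),\name\tau\in N$; such $N$ exists since $\kappa^{<\kappa}=\kappa$ (take an increasing continuous chain of submodels of size $\kappa$ closed under $<\kappa$-sequences and run through $\kappa$ steps, or just build $N$ directly as the union of a $\le_\kappa$-length elementary tower). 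Then by $B$-$\kappa$-properness there is $q\ge_\zeta p$ which is $\mathbb P$-generic over $N$.

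The next step is to read off the bounding set from $N$. Set $X=\{\alpha:\alpha\text{ is an ordinal and }\alpha\in N\}=N\cap\mathrm{Ord}$; then $X\in\mathbf V$ and $|X|\le|N|=\kappa$. I claim $q\forces``\name\tau\in X"$. Suppose not: then some $q'\ge q$ forces $\name\tau=\beta$ for some ordinal $\beta\notin N$. Consider the set $\mathcal D=\{r\in\mathbb P:r\text{ decides }\name\tau\}$, which is dense and, being definable from parameters in $N$, belongs to $N$; indeed for each $r\in\mathcal D\cap N$ the value $r$ forces to $\name\tau$ lies in $N$ by elementarity, hence in $X$. Now genericity of $q$ over $N$ means that $\mathcal D\cap N$ is predense above $q$ (this is the standard equivalent formulation of ``$\mathbb P$-generic over $N$''): every condition extending $q$ is compatible with some member of $\mathcal D\cap N$. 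Apply this to $q'$: there is $r\in\mathcal D\cap N$ compatible with $q'$, say via a common extension $q''$. But $r$ forces $\name\tau$ to be some $\alpha\in N\cap\mathrm{Ord}=X$, while $q''\ge q'$ forces $\name\tau=\beta\notin X$ — contradiction since $q''$ extends both $r$ and $q'$. Hence $q\forces``\name\tau\in X"$, as required, and by Observation \ref{obs:masteq}(2) this is exactly \emph{Mastering $\kappa$}.

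The step I expect to be the only real point of care is the equivalence between the chosen definition of ``$q$ is $\mathbb P$-generic over $N$'' and the statement that $\mathcal D\cap N$ is predense above $q$ for every dense $\mathcal D\in N$ (equivalently, that $q$ forces the $\mathbb P$-generic filter to meet every dense set coded in $N$). This is the familiar characterisation of generic conditions used throughout proper forcing theory, so I would simply invoke it, noting that it is insensitive to whether one phrases genericity via dense sets, maximal antichains, or the forcing relation. One should also check the trivial points: that $N\cap\mathrm{Ord}$ is genuinely a set in $\mathbf V$ of size $\le\kappa$ (immediate, as $N$ is), and that the dense set $\mathcal D$ of conditions deciding $\name\tau$ is definable from $\name\tau,\mathbb P\in N$ and hence is an element of $N$. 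With these in hand the argument closes, and the conclusion that $\aleph_2$-type cardinals are handled follows, via Lemma \ref{preservation-kappa-plus}, from the resulting \emph{Mastering $\kappa$}.
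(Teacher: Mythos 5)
Your proposal is correct and follows essentially the same route as the paper: choose $N\prec H(\chi)$ of size $\kappa$ closed under ${<}\kappa$-sequences containing all relevant parameters, apply $B$-$\kappa$-properness to obtain $q\ge_\zeta p$ generic over $N$, and take $X=N\cap\mathrm{Ord}$ via the reformulation in Observation \ref{obs:masteq}(2). The only difference is cosmetic: where the paper cites \cite[Fact 3.2]{Goldsterniteration} for the fact that a generic condition forces $\name{\tau}\in N$, you prove it directly from predensity of $\mathcal D\cap N$ above $q$ for the dense set of deciding conditions.
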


\begin{proof} 
Suppose that $p\in \mathbb P$ and $p\forces``\name{x}\mbox{ is an ordinal}"$, while $\zeta<\kappa$. Then we can find $N$ with the properties such as in the definition 
above and $p\in N$. Let $q\ge_\zeta p$ be $\mathbb P$-generic over $N$. Then $q\forces {``}\name{x}\in N"$ (see  \cite[Fact 3.2]{Goldsterniteration} for
various equivalent definitions of being a generic condition, including this one) and hence $N$ is a set of size $\le\kappa$ which we can use for $X$ 
in the definition of {\em Mastering $\kappa$}. In particular, $\mathbb P$ preserves $\kappa^+$.
$\eop_{\ref{implication-properness}}$
\end{proof}

An incorrect claim about the equivalence of properness and {\em Mastering $\aleph_0$} appeared as \cite[III, Lemma 2.6]{zbMATH03779315}. See more on the history of that claim in \S\ref{sec:history}. However, it is true that Axiom A forcing is proper, as proved by Baumgartner in 
\cite[Th 2.4]{Ba_AppPFA}
A similar proof which we shall give here shows that any forcing satisfying property $B(\kappa)$ is  $B-\kappa$-proper.

\begin{theorem}\label{th:Aimpliespr} Any forcing satisfying property $B(\kappa)$ is $B-\kappa$-proper.
\end{theorem}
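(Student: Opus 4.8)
The goal is to prove that any forcing $\mathbb{P}$ satisfying property $B(\kappa)$ is $B$-$\kappa$-proper. The plan is to mimic Baumgartner's proof that Axiom A forcing is proper, building the generic condition over $N$ as the limit of a fusion sequence. Fix $p\in\mathbb{P}$, $\zeta<\kappa$, and $N\prec H(\chi)$ with $|N|=\kappa$, $N^{<\kappa}\subseteq N$, $\kappa\subseteq N$, and $p, (\mathbb{P},\langle\le_\xi:\xi<\kappa\rangle)\in N$. Since $N$ has size $\kappa$ and $\mathbb{P}\in N$, the set of dense subsets of $\mathbb{P}$ lying in $N$ has size $\le\kappa$; enumerate it as $\langle D_i : i<\kappa\rangle$, and note that because $N^{<\kappa}\subseteq N$ and $\kappa\subseteq N$, any initial segment of this enumeration belongs to $N$ (one should take the enumeration to be definable from parameters in $N$, e.g. induced by a fixed wellorder of $H(\chi)$ in $N$, so that $\langle D_j : j<i\rangle\in N$ for each $i<\kappa$).

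\textbf{Key steps.} First I would recursively construct a fusion sequence $\langle p_\xi : \xi<\kappa\rangle$ with $p_0 = p$, $p_\xi\le_{\zeta+\xi} p_{\xi+1}$, taking fusion limits at limit stages (legitimate by the \emph{Fusion} clause of Definition \ref{def:propertyB}, and the limit-ordinal clause ensures $p_\eta\le p_\xi$ for limits $\xi$), such that at stage $\xi+1$ the condition $p_{\xi+1}$ is obtained by applying \emph{Mastering $\kappa$} to the dense set $D_\xi$: there is $p_{\xi+1}\ge_{\zeta+\xi} p_\xi$ and $D_\xi'\subseteq D_\xi$ of size $\le\kappa$ predense above $p_{\xi+1}$. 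The crucial extra point is that this whole construction can be carried out \emph{inside} $N$ up to each stage: since $\mathbb{P},\langle\le_\xi\rangle, p\in N$, the enumeration $\langle D_i\rangle$ and the recursion are definable in $N$, and at stage $\xi+1$ the witnesses $p_{\xi+1}$ and $D_\xi'$ can be chosen in $N$ (using that $N\prec H(\chi)$ knows \emph{Mastering $\kappa$} holds, and $D_\xi'$, being a subset of $\mathbb{P}$ of size $\le\kappa$, can be taken as an element of $N$ since $\kappa\subseteq N$ and $N^{<\kappa}\subseteq N$ — one picks it via the fixed wellorder). Thus each $D_\xi'\subseteq N$ and each $p_\xi\in N$. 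Let $q = t(\langle p_\xi:\xi<\kappa\rangle)$ be the fusion limit; then $p_\xi\le_{\zeta+\xi} q$ for all $\xi$, and in particular $q\ge_\zeta p$.

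\textbf{Genericity of $q$.} It remains to show $q$ is $\mathbb{P}$-generic over $N$, i.e. for every dense $D\in N$, the set $D\cap N$ is predense above $q$. Given such $D$, it appears as $D_\xi$ for some $\xi<\kappa$, with $D_\xi'\subseteq D_\xi\cap N$ of size $\le\kappa$ predense above $p_{\xi+1}$. Since $q\ge p_{\xi+1}$, the set $D_\xi'$ is still predense above $q$ (predensity above a condition is preserved under strengthening). As $D_\xi'\subseteq D\cap N$, the set $D\cap N$ is predense above $q$, which is exactly what is needed. Equivalently, every condition extending $q$ is compatible with some element of $D\cap N$, so $q$ forces the generic filter to meet every dense set of $N$, hence $q$ forces $G\cap N$ generic over $N$.

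\textbf{Main obstacle.} The routine part is the fusion bookkeeping; the delicate part is the bookkeeping ensuring the construction stays inside $N$ — specifically that the partial sequences $\langle p_\xi:\xi<\eta\rangle$ and the enumeration $\langle D_i:i<\eta\rangle$ are elements of $N$ at each stage $\eta<\kappa$, so that at limit stages the fusion limit (which depends on the whole prior sequence) and at successor stages the next \emph{Mastering $\kappa$} witness can both be located in $N$. This is where $N^{<\kappa}\subseteq N$, $\kappa\subseteq N$, and working with a definable recursion (rather than arbitrary choices) are all used; once these closure facts are invoked correctly the argument goes through as in the Axiom A case.
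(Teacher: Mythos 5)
Your proposal is correct and follows essentially the same route as the paper's proof: enumerate the dense sets lying in $N$, build a fusion sequence applying \emph{Mastering $\kappa$} at successor stages and fusing at limits, and read off genericity of the fusion limit. Your extra bookkeeping — shifting the orders to $\le_{\zeta+\xi}$ so that the limit is genuinely $\ge_\zeta p$, and using $N^{<\kappa}\subseteq N$, $\kappa\subseteq N$ and elementarity to keep the $p_\xi$ and the sets $D_\xi'$ inside $N$ (so that $D_\xi'\subseteq D_\xi\cap N$) — supplies details that the paper's own proof leaves implicit but which are indeed needed for the stated conclusion.
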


\begin{proof} Let ${\mathbb P}$ be a forcing notion satisfying property $B(\kappa)$, $p\in {\mathbb P}$, $\langle \le_\zeta:\,\zeta<\kappa\rangle$
some orders witnessing $B(\kappa)$ and $\zeta^\ast<\kappa$.
Suppose that $N\prec \HH(\chi)$ is of size $\kappa$ with ${N}^{<\kappa} \subseteq N$ and ${\mathbb P}, p, \langle \le_\zeta:\,\zeta<\kappa\rangle \in N$.
Let $\langle \DD_\alpha:\,\alpha<\kappa\rangle$ enumerate all open dense subsets of ${\mathbb P}$ which are in $N$. By induction on
$\alpha<\kappa$ we construct a sequence $\langle p_\alpha: \,\alpha<\kappa\rangle$ so that:
\begin{itemize}
\item $p_0=p$,
\item $p_{\alpha+1}\ge_\alpha p_\alpha$ is such that $\DD_\alpha$ has a subset $\EE_\alpha$ of size $\le\kappa$ which is dense above $p_{\alpha+1}$,
\item at limit stages $\delta\in (0,\kappa)$ of the induction the condition $p_\delta$ is the fusion of $\langle p_\alpha:\,\alpha<\delta\rangle$.
\end{itemize}
The possibility to run this induction is given by the definition of property $B(\kappa)$. By elementarity, we can choose the sets $\EE_\alpha\in N$. 
Since $|\EE_\alpha|\le\kappa$ and $\kappa\in N$, we can find a surjection $f:\,\kappa\to \EE_\alpha$ with $f\in N$. Since $\kappa\subseteq N$ we
have that for every $\beta<\kappa$ the value $f(\alpha)\in N$. Hence, we conclude that $\EE_\alpha\subseteq  N$.

At the end of this inductive definition we can define $q$ as the fusion of 
$\langle p_\alpha:\,\alpha<\kappa\rangle$. In particular we have $p\le_\zeta q$ for any $\zeta$, so $p\le q$. 

We claim that the condition $q$ is generic over $N$. To see that, let $\DD$ be an open dense subset of ${\mathbb P}$ which is in $N$. We need to 
show that $\DD\cap N$ is predense above $q$, so suppose that $r\ge q$. Let $\alpha$ be such that $\DD=\DD_\alpha$. Since we have $p_{\alpha+1}\le_{\alpha+1} q\le r$, we know that there is $s\in \EE_\alpha$ with $s\ge r$, by the choice of $\EE_\alpha$. Since $\EE_\alpha\subseteq  N$
we have $s\in N$. Hence $s\in \DD \cap N$ is compatible with $r$.
$\eop_{\ref{th:Aimpliespr}}$
\end{proof}

Hence, in theory we could have done the whole proof of the iteration theorem without introducing the notion of $B-\kappa$-properness, but the iteration is easier to explain in terms of $B-\kappa$-properness and this discussion also allows us to better see the fine differences between the preservation of cardinals and properness. 

Finally the last, non-iterable, property and a crucial one is given by the following definition.

\begin{definition}\label{def:amalgamation} An iteration of property $B(\kappa)$ forcing $\langle {\mathbb P}_\alpha, \name{Q}_\alpha:\,\alpha<\alpha^*\rangle$ is said to have {\em amalgamation} if for every $\alpha<\alpha^\ast$, for every $C\in [\alpha]^{<\kappa}$, for every $\zeta<\kappa$, for every dense open set 
$\DD\subseteq {\mathbb P}_\alpha$ and every $p\in {\mathbb P}_\alpha$, there is $q$ such that:
\begin{itemize}
\item $q\in \DD$,
\item $q\ge p$,
\item for every $\beta\in C$, we have $q\rest\beta\forces_{\mathbb P_\beta}``q(\beta)\name{\ge}^{\name{Q}_\beta}_{\zeta}\, p(\beta)"$.
\end{itemize}
\end{definition}

\begin{observation}\label{label:propimplamalgamation} It follows from the definitions that the property of being B-$\kappa$-proper {\em for the iteration} implies the 
property of amalgamation, but this is not our assumption. The assumption is being B-$\kappa$-proper for each individual forcing. The main point will be that with that and the amalgamation of the iteration, we can prove that the limit of the iteration is B-$\kappa$-proper.
\end{observation}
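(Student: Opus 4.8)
The plan is to extract the amalgamation property of Definition~\ref{def:amalgamation} from $B$-$\kappa$-properness of the iteration (Definition~\ref{def:B-kappa-properness} applied to each $\mathbb P_\alpha$) by the usual reflection argument through an elementary submodel. First I would fix the data: an $\alpha\le\alpha^\ast$, a set $C\in[\alpha]^{<\kappa}$, an ordinal $\zeta<\kappa$, a dense open $\DD\subseteq\mathbb P_\alpha$ and a condition $p\in\mathbb P_\alpha$. Using $\kappa^{<\kappa}=\kappa$, I would pick $N\prec H(\chi)$ with $|N|=\kappa$, $N^{<\kappa}\subseteq N$ and $\kappa\subseteq N$, such that the iteration $\langle\mathbb P_\beta,\name{Q}_\gamma:\,\beta\le\alpha^\ast,\gamma<\alpha^\ast\rangle$, the pair $(\mathbb P_\alpha,\langle\le_\zeta:\,\zeta<\kappa\rangle)$ and the objects $p,\DD,C,\zeta$ all lie in $N$. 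Since $|C|<\kappa$, $C\in N$ and $\kappa\subseteq N$, elementarity gives $C\subseteq N$, and hence $p\rest\beta,p(\beta)\in N$ for every $\beta\in C$.

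Next I would invoke the assumption that $\mathbb P_\alpha$ is $B$-$\kappa$-proper, applied to $p$, $\zeta$ and $N$ (taking $N$, and so the coordinates "activated" by the iteration-type $\le_\zeta$-orders, to include all of $C$). This produces $q_0$ which is $\mathbb P_\alpha$-generic over $N$ and satisfies $q_0\ge_\zeta p$; since the $\le_\zeta$-orders of the iteration are coordinatewise on the activated coordinates, $q_0\ge_\zeta p$ already delivers $q_0\ge p$ and $q_0\rest\beta\forces``q_0(\beta)\ge_\zeta p(\beta)"$ for every $\beta\in C$, i.e. the second and third clauses of amalgamation. It remains to push $q_0$ into $\DD$. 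Because $\DD\in N$ is dense open and $q_0$ is generic over $N$, the set $\DD\cap N$ is predense above $q_0$; I would choose $d\in\DD\cap N$ compatible with $q_0$ and form a common extension $q$, arranging that on the fewer-than-$\kappa$ coordinates in $C$ the amalgamation of $q_0$ with $d$ is performed so as to keep $q\rest\beta\forces``q_0(\beta)\le_\zeta q(\beta)"$, whence $q\rest\beta\forces``p(\beta)\le_\zeta q(\beta)"$ by transitivity of $\le_\zeta$. As $d\in\DD$, $q\ge d$ and $\DD$ is open, we get $q\in\DD$; and $q\ge q_0\ge p$. So $q$ witnesses amalgamation for $\alpha,C,\zeta,\DD,p$.

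The step I expect to be the main obstacle is the last one: amalgamating $q_0$ with a member of $\DD\cap N$ while preserving the $\le_\zeta$-commitment on the coordinates in $C$. For a general dense open set in a general $(<\kappa)$-support iteration this need not be possible, and what makes it go through is precisely the iteration-aware content of "$\mathbb P_\alpha$ is $B$-$\kappa$-proper" — that the iteration carries $\le_\zeta$-orders of iteration type witnessing fusion and admitting generic extensions over submodels — with genericity over $N$ serving only to reflect the conclusion down to $\V$. This is the same delicacy that, under the paper's actual weaker hypotheses ($B$-$\kappa$-properness of each iterand together with the amalgamation property of the iteration), must be re-established directly in the main iteration theorem; it is exactly why amalgamation is singled out as a separate, non-iterable assumption rather than being deduced from the individual forcings here.
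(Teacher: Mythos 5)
Your first step is sound and is essentially all that the paper's remark amounts to: fixing $N\prec \HH(\chi)$ with $|N|=\kappa$, $N^{<\kappa}\subseteq N$ and $p,\DD,C,\zeta$ and the iteration in $N$ (so $C\subseteq N$ since $|C|<\kappa$ and $\kappa\subseteq N$), and applying B-$\kappa$-properness of $\mathbb P_\alpha$ with respect to orders of the form $\le_{C,\zeta}$ of Definition \ref{def:F-n} (this is the reading one must take, since the Observation does not fix the orders), you obtain $q_0\ge_{C,\zeta}p$ which is $\mathbb P_\alpha$-generic over $N$; hence $\DD\cap N$, a set of size $\le\kappa$, is predense above $q_0$, and as in Lemma \ref{implication-properness} and Observation \ref{obs:masteq}(2), $q_0$ forces any relevant ordinal name into $N$. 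That predense/Mastering-style statement is the form in which ``amalgamation'' is in fact verified for the concrete iteration in Theorem \ref{th:amlgamation} (note that its proof never produces a condition inside $\DD$; it reduces to names of ordinals and produces $q\ge_{C,\zeta}p$ together with a $\le\kappa$-sized set of possible values), and it is the only part that genuinely ``follows from the definitions''.

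The genuine gap is your second step, which you flag as the main obstacle and then do not close. Compatibility of $q_0$ with some $d\in\DD\cap N$ gives only a common $\le$-extension; nothing in Definition \ref{def:B-kappa-properness}, nor in Property B$(\kappa)$, supplies a common extension $q$ with $q\rest\beta\forces``q(\beta)\ge_\zeta q_0(\beta)"$ for $\beta\in C$, and appealing to the ``iteration-aware content'' of B-$\kappa$-properness is an assertion, not an argument. Worse, the required $q$ need not exist at all, so no argument can supply it: in the iteration of ${\mathbb P}(\FF)$ take $\beta\in C$, $\zeta\ge 2$, let $p(\beta)$ be (a name for) the full tree ${}^{<\kappa}\kappa$, and let $\DD$ be the dense open set of conditions deciding $\name{g}_\beta(0)$, the value at $0$ of the $\beta$-th generic function. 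Any $q$ with $q\rest\beta\forces``q(\beta)\ge_\zeta p(\beta)"$ must keep every node $\langle\delta\rangle$, $\delta<\kappa$ (its splitting history in $p(\beta)$ has order type $1<\zeta$), so the root of $q(\beta)$ remains $\FF$-splitting and $q$ has extensions forcing different values of $\name{g}_\beta(0)$; hence $q\notin\DD$, and no condition satisfies both the clause ``$q\in\DD$'' of Definition \ref{def:amalgamation} and the $\le_\zeta$-clause on $C$ for this $\DD$. So literal membership in $\DD$ cannot be extracted from properness of the iteration; the correct content of the Observation is the predense (Mastering-style) form, which your first paragraph already delivers, and the proof should stop there rather than attempt to land inside $\DD$.
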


Before we state and prove the iteration theorem given the properties defined, we recall  for completeness the classical definition of 
the iteration with supports of size $\le\kappa$. See \S \ref{sec:history} of why we feel that we should state exactly the definition we use.

\begin{definition}\label{def:mixed-iteration} By induction on the ordinal $\alpha^*$ we define what is a {\em an iteration of length $\alpha^*$ with supports of size $\le\kappa$ of 
$B$-$\kappa$-proper strongly $(<\kappa)$-closed with strong fusion forcings}, and what is meant by {\em the limit ${\mathbb P}_{\alpha^*}$ of the iteration}. 

If $\alpha^*=0$, the iteration is a sequence of length 1 whose only entry is the trivial forcing notion, which is also the limit of the iteration. For
$\alpha^\ast>0$, the iteration is a sequence of the form $\langle {\mathbb P}_\alpha, \name{Q}_\alpha:\,\alpha< \alpha^*\rangle$ such that:
\begin{itemize}
\item For $\alpha<\alpha^*$, the forcing notion ${\mathbb P}_\alpha$ is the limit of an iteration with supports of size $\le\kappa$ of an iteration $\langle {\mathbb P}_\beta, \name{Q}_\beta:\,\beta<\alpha\rangle$ of $B$-$\kappa$-proper strongly $(<\kappa)$-closed forcings with strong fusion.
\item For every $\alpha<\alpha^*$, it is forced by ${\mathbb P}_\alpha$ that $\name{Q}_\alpha$ is a forcing notion and that:
\begin{itemize}
\item $\name{Q}_\alpha$ is strongly $(<\kappa)$-closed, as witnessed by a function $\name{f}_\alpha$, 
\item $\name{Q}_\alpha$ has the strong fusion property, as witnessed by $\langle \name{\le}_\zeta^\alpha:\,\zeta<\kappa\rangle$ and a function $\name{t}_\alpha$ and
\item $(\name{Q}_\alpha, \langle \name{\le}_\zeta^\alpha:\,\zeta<\kappa\rangle)$ is $B$-$\kappa$-proper.
\end{itemize}
\item If $\alpha^\ast=\alpha+1$ then ${\mathbb P}_{\alpha^*}={\mathbb P}_{\alpha} \ast \name{Q}_\alpha$.
\item Elements of ${\mathbb P}_{\alpha^*}$ are sequences of the form $p=\langle p(\alpha):\,\alpha<\alpha^\ast\rangle$ satisfying the following:\begin{enumerate}
\item for every $\alpha<\alpha^*$, the sequence $\langle p(\beta):\,\beta<\alpha\rangle$ is in ${\mathbb P}_\alpha$,
\item each $p(\alpha)$ is a canonical ${\mathbb P}_\alpha$-name for a condition in $\name{Q}_\alpha$,
\item ${\rm supt}(p)=\{\alpha<\alpha^\ast:\,\neg (p\rest\alpha\forces_{{\mathbb P}_{\alpha}} ``p(\alpha)=\emptyset_{\name{Q}_\alpha}")\}$ is a set of size $\le\kappa$.
\end{enumerate}
\item The order $\le_{\alpha^\ast}$ on ${\mathbb P}_{\alpha^\ast}$ is given by $p\le q$ iff the following hold:
\begin{enumerate}
\item if $\alpha^\ast=\alpha+1$, then $(p\rest\alpha, p(\alpha))\le (q\rest\alpha, q(\alpha))$ in the two step iteration ${\mathbb P}_\alpha\ast \name{Q}_\alpha$,
\item if $\alpha^\ast$ is a limit, then for all $\alpha<\alpha^\ast$ we have $p\rest\alpha\le q\rest \alpha$.
\end{enumerate}
 \end{itemize}
 To make the expression less cumbersome, we may also speak of a $\le\kappa$-support iteration of individual forcings that have the properties of 
 strong $(<\kappa)$-closure,  strong fusion and $B-\kappa$-properness.
\end{definition}

\begin{theorem}[General Iteration Theorem]\label{th:iteration} Let $\kappa^{<\kappa}=\kappa$. Suppose that $\langle {\mathbb P}_\alpha, 
\name{Q}_\alpha:\,\alpha<\alpha^*\rangle$ is a $\le\kappa$-support iteration of individual forcings that
\begin{itemize}
\item  are strongly $(<\kappa)$-closed
\item have strong fusion and
\item are $B-\kappa$-proper, 
\end{itemize}
and further suppose that the iteration $\langle {\mathbb P}_\alpha, \name{Q}_\alpha:\,\alpha<\alpha^*\rangle$ has amalgamation.
Then the limit ${\mathbb P}_{\alpha^*}$ of the iteration has the following properties:
\begin{enumerate}
\item it is strongly $(<\kappa)$-closed and
\item it is $B-\kappa$-proper.
\end{enumerate}
In  particular, ${\mathbb P}_{\alpha^*}$ preserves cardinals up to and including $\kappa^+$.
\end{theorem}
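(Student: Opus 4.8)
The plan is to prove the two numbered items by induction on $\alpha^\ast$, with the bulk of the work in the limit case, and then deduce cardinal preservation as a corollary. First I would set up notation: fix a $\le\kappa$-support iteration satisfying the three local hypotheses plus amalgamation, and fix $\langle\name f_\alpha\rangle$, $\langle\name t_\alpha\rangle$, $\langle\name\le_\zeta^\alpha\rangle$ witnessing strong closure and strong fusion of the iterands. The first step is to define, for each $\alpha\le\alpha^\ast$, the sequence of orders $\le_\zeta$ on ${\mathbb P}_\alpha$ in the natural way --- $p\le_\zeta q$ iff $p\le q$ and $p\restriction\beta\forces``q(\beta)\ge_\zeta^{\name Q_\beta}p(\beta)"$ for all $\beta\in\dom(p)$ --- and to record that these satisfy items (1),(2) of Definition \ref{def:propertyB} (inherited coordinatewise) and the successor step of fusion (from the two-step iteration lemma and strong fusion of $\name Q_\alpha$). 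I would then prove (1): given an increasing sequence $\langle p_i:i<i^\ast\rangle$ in ${\mathbb P}_\alpha$ with $i^\ast<\kappa$, define $f(\bar p)$ coordinatewise by $f(\bar p)\restriction\beta\forces``f(\bar p)(\beta)=\name f_\beta(\langle p_i(\beta):i<i^\ast\rangle)"$; the support has size $\le\kappa$ since $i^\ast<\kappa$ and $\kappa^{<\kappa}=\kappa$ bounds $|\bigcup_i\supt(p_i)|$, and one checks by induction that this is an upper bound. Together with closure, strong fusion at the iteration level follows by the same coordinatewise recipe using $\name t_\beta$, which is what will be needed to run fusions of length $\kappa$ in the proof of (2).

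The heart is (2): ${\mathbb P}_{\alpha^\ast}$ is $B$-$\kappa$-proper. I would argue by induction on $\alpha^\ast$; the successor case $\alpha^\ast=\alpha+1$ uses that ${\mathbb P}_\alpha$ is $B$-$\kappa$-proper (induction hypothesis) and $\name Q_\alpha$ is forced to be $B$-$\kappa$-proper, via a standard two-step preservation argument for generic conditions, taking care that the $\le_\zeta$ witness is preserved. The limit case is where amalgamation is essential. Fix $N\prec H(\chi)$ with $|N|=\kappa$, $N^{<\kappa}\subseteq N$, $\kappa\subseteq N$, containing ${\mathbb P}_{\alpha^\ast}$ and all the relevant parameters, fix $p\in{\mathbb P}_{\alpha^\ast}\cap N$ and $\zeta<\kappa$. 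Enumerate in $N$ the dense open subsets of ${\mathbb P}_{\alpha^\ast}$ lying in $N$ as $\langle\DD_\alpha:\alpha<\kappa\rangle$, and build a fusion sequence $\langle p_\alpha:\alpha<\kappa\rangle$ with $p_0=p$, $p_{\alpha+1}\ge_{\zeta+\alpha}p_\alpha$, at limits the fusion (using strong fusion of the iteration, which lives in $N$, so $p_\delta\in N$ and the whole construction stays inside $N$), and at stage $\alpha+1$ we must meet $\DD_\alpha$ "below" while improving on a prescribed growing set of coordinates. This is exactly what amalgamation delivers: apply it with the dense open set $\DD_\alpha$, the condition $p_\alpha$, the ordinal $\zeta+\alpha$, and a set $C_\alpha\in[\alpha^\ast]^{<\kappa}$ chosen (via a bookkeeping bijection $\kappa\to\kappa\times\kappa$) so that $\bigcup_\alpha C_\alpha=\bigcup_\alpha\supt(p_\alpha)$; this yields $p_{\alpha+1}\in\DD_\alpha$ with $p_{\alpha+1}\ge p_\alpha$ and $p_{\alpha+1}\ge_{\zeta+\alpha}p_\alpha$ on all of $C_\alpha$. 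Since for each coordinate $\beta$ the "improvement" on $\beta$ happens cofinally often (by the bookkeeping) and the $\le_\zeta^\beta$ are decreasing in $\zeta$ with limits at limit ordinals, the fusion $q=t(\langle p_\alpha:\alpha<\kappa\rangle)$ exists, lies above $p$ with $q\ge_\zeta p$, and meets every $\DD_\alpha$, hence is ${\mathbb P}_{\alpha^\ast}$-generic over $N$.

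I expect the main obstacle to be the limit step of the fusion construction inside $N$: one must be careful that $\DD_\alpha\in N$ implies the amalgam can be chosen in $N$ (so that $p_{\alpha+1}\in N$), which requires that amalgamation is witnessed in a sufficiently absolute way --- or, more cheaply, that we only need $p_{\alpha+1}\in{\mathbb P}_{\alpha^\ast}$ together with genericity of the final $q$, and genericity follows merely from meeting each $\DD_\alpha$ regardless of whether the intermediate conditions are in $N$, since predensity of a size-$\le\kappa$ subset of $\DD_\alpha$ below $q$ already forces $G\cap\DD_\alpha\cap N\ne\emptyset$ by elementarity. The other delicate point is the bookkeeping ensuring $\bigcup_\alpha\supt(p_\alpha)$ is covered by $\bigcup_\alpha C_\alpha$ while each $C_\alpha$ has size $<\kappa$: here one interleaves, at stage $\alpha$ with $\mathrm{pr}(\alpha)=(\beta,\upsilon)$, the $\upsilon$-th element of $\supt(p_\beta)$ into $C_\alpha$, exactly as in the proof of Lemma \ref{lem:puremast}. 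Finally, cardinal preservation is immediate: $B$-$\kappa$-properness implies {\em Mastering $\kappa$} by Lemma \ref{implication-properness}, hence ${\mathbb P}_{\alpha^\ast}$ has property $B(\kappa)$, hence preserves $\kappa^+$ by Lemma \ref{preservation-kappa-plus}, while strong $(<\kappa)$-closure preserves cardinals and cofinalities $\le\kappa$.
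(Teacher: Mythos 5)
Your overall route is the same as the paper's: prove strong $(<\kappa)$-closure coordinatewise by induction on the length, prove $B$-$\kappa$-properness by building, inside $N$, a $\kappa$-length sequence meeting the dense sets of $N$, using amalgamation (applied within $N$, which is legitimate by elementarity, so your worry about $p_{\alpha+1}\in N$ is resolved exactly as in Lemma \ref{cor:preser1} and your ``cheaper'' fallback is both incorrect and unnecessary), and deduce preservation of $\kappa^+$ via Lemma \ref{implication-properness} and Lemma \ref{preservation-kappa-plus}. However, there is a genuine gap in the fusion apparatus you set up for the iteration. You define $p\le_\zeta q$ on ${\mathbb P}_{\alpha^\ast}$ by requiring $q(\beta)\ge_\zeta p(\beta)$ at \emph{every} $\beta\in\dom(p)$, i.e.\ at up to $\kappa$ many coordinates, and you claim that strong fusion for these relations ``follows by the same coordinatewise recipe using $\name{t}_\beta$''. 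But the amalgamation hypothesis (Definition \ref{def:amalgamation}) only produces extensions that are $\ge_\zeta$ on a prescribed set $C$ of size $<\kappa$; consequently the sequence you actually construct is increasing only in relations of the form $\le_{C_\alpha,\zeta+\alpha}$, not in your global $\le_{\zeta+\alpha}$, so the hypothesis of your strong-fusion function is never satisfied and the step ``the fusion $q=t(\langle p_\alpha\rangle)$ exists'' does not follow as stated.

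What is needed instead is precisely the paper's intermediate machinery: the relations $\le_{C,\zeta}$ with $|C|<\kappa$, the notion of a fusion sequence $\langle(p_\zeta,C_\zeta)\rangle$ in ${\mathbb P}_\alpha$ (Definition \ref{def:F-n}), and a dedicated Fusion Lemma (Lemma \ref{lem:fusion-lem}) asserting that such sequences have canonical limits $H_\alpha(\cdot)$ with $p_\zeta\le_{C_\zeta,\zeta}q$. Its proof is not a one-line coordinatewise application of $\name{t}_\beta$: for a coordinate $\gamma$ that enters the bookkeeping only at stage $\zeta^\ast$, the coordinate sequence must be frozen below $\zeta^\ast$ (setting $r_\zeta=p_{\zeta^\ast}(\gamma)$ for $\zeta\le\zeta^\ast$) before $\name{t}_\gamma$ applies, the supports must be checked to remain of size $\le\kappa$ at limits, and limits of cofinality $\le\kappa$ and $>\kappa$ must be treated separately, all by a simultaneous induction on $\delta$ and on $\beta\le\alpha$ producing coherent restrictions $q^\beta$. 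Your bookkeeping of the sets $C_\alpha$ and the observation that each coordinate is improved cofinally often are the right ingredients, but without this lemma (or an equivalent) the limit stages of your construction, and the final condition $q$ with $q\ge_\zeta p$, are not justified.
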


We explain the main ideas of the proof, comparing it with the known literature. 
The proof relies on several lemmas, the most important of which is the Fusion Lemma, presented as Lemma \ref{lem:fusion-lem}. It is analogous to Baumgartner's \cite[Lemma 7.2]{Ba} used to handle preservation of 
$\aleph_1$ in a countable support iteration of Axiom A forcing, but it has some new ideas. The main point is to construct two increasing sequences of conditions, one (the $q$s) that increases in length with the initial segments frozen, and the other the sequence of $p$s whose growth is controlled by a fusion. The use of elementary submodels in the
proof of Lemma \ref{cor:preser1} comes from Shelah's work on
the preservation of properness by countable support iterations \cite{Sh_P}[III, Th 3.2]. As in these two proofs, the idea of Fusion lets as pass the limits
of cofinality $<\kappa$. The new case here are limit cardinals of cofinality $\kappa$. 

It is in the proof of Lemma \ref{cor:preser1} that we use the assumption of the iteration having amalgamation. This is a main point.

See \S\ref{sec:conclusions} for a discussion of possible generalisations.

\begin{proof} For every $\alpha<\alpha^\ast$, let $\name{f}_\alpha$ be a ${\mathbb P}_{\alpha}$-name for a function witnessing that 
$\name{Q}_\alpha$ is strongly $(<\kappa)$-closed and let $\langle \name{\le}^\alpha_\zeta:\,\zeta<\kappa\rangle$ and $\name{t}_\alpha$
be ${\mathbb P}_{\alpha}$-names that witness the strong fusion of $\name{Q}_\alpha$.

\smallskip

{\noindent (1)} We shall prove by induction on $\alpha^\ast$ that ${\mathbb P}_{\alpha^\ast}$ is strongly $(<\kappa)$-closed as witnessed by a function $h_{\alpha^*}$.

Suppose that $\langle p_i:\,i<i^*\rangle$ is an increasing sequence in ${\mathbb P}_{\alpha^\ast}$ and $i^\ast<\kappa$. By induction on 
$\alpha<\alpha^\ast$ we choose an upper bound $q_\alpha=h_\alpha(\langle p_i\rest\alpha:\,i<i^*\rangle)$, and we claim that we can do it so that 
for every $\beta<\alpha$ we have $q_\beta= q_\alpha\rest\beta$.

The case \underline{$\alpha=0$} is trivial. For the case \underline{$\alpha=\beta+1$}, this is like the well-known proof that the iteration of two $(<\kappa)$-closed forcings is $(<\kappa)$-closed, but we need to make that property functional. By the induction hypothesis, ${\mathbb P}_\beta$ is a strongly 
$(<\kappa)$-closed forcing, as witnessed by a function $h_\beta$.

Hence we may define $q_\beta=h_\beta(\langle p_i\rest\beta:\,i<i^*\rangle)$, since the
sequence $\langle p_i\rest\beta:\,i<i^*\rangle$ is increasing. Then $q_\beta$ is a common upper bound for $\langle p_i\rest\beta:\,i<i^*\rangle$.
We then note that $q_\beta$ forces that 
$\langle p_i(\beta):\,i<i^\ast\rangle$ is increasing in $\name{Q}_\beta$ (since for each $i$ we have $p_{i+1}\rest\beta\forces_{{\mathbb P}_\beta}`` p_i(\beta)
\le_{\name{Q}_\beta} p_{i+1}(\beta)"$). Hence we have that $q_\beta$ forces that $\name{q}'_\beta$ defined by $\name{f}_\beta(\langle p_i(\beta):\,i<i^\ast\rangle)$ is a common upper bound to 
$\langle p_i(\beta):\,i<i^\ast\rangle$. Then $q_\alpha=(q_\beta, \name{q}'_\beta)$ is an upper bound for $\langle p_i\rest\beta:\,i<i^*\rangle$ in ${\mathbb P}_\alpha$ and $q_\beta= q_\alpha\rest\beta$. In conclusion, we may define 
\[
h_\alpha( \langle p_i:\,i<i^*\rangle)=
(h_\beta( \langle p_i\rest\beta:\,i<i^*\rangle), \name{f}_\beta (\langle p_i(\beta):\,i<i^\ast\rangle)).
\]

The case \underline{$\alpha>0$ is a limit ordinal of cofinality $\le\kappa$}. We intend to define $q_\alpha$ as the unique condition in ${\mathbb P}_\alpha$ satisfying that  for all $\beta<\alpha$ we have $q_\alpha\rest\beta=q_\beta$. This definition is functional, so once we have
checked that it is correct, we can define $h_\alpha( \langle p_i:\,i<i^*\rangle)=q_\alpha$.

We have to check that the support of such $q_\alpha$ is still of size $\le\kappa$, so let $j^\ast=\cf(\alpha)$, hence $j^\ast\le\kappa$ and let us fix
an increasing sequence $\langle \alpha_j:\,j<j^\ast\rangle$ with $\alpha=\sup_{j<j^\ast}\alpha_j$. By the induction hypothesis we have that
$\bigcup_{\beta<\alpha}\supt(q_\beta)=\bigcup_{j<j^\ast}\supt(q_{\alpha_j})$, so is of size $\le\kappa$. In particular $q_\alpha$ is indeed a condition in 
${\mathbb P}_\alpha$ and, clearly, for $\beta<\alpha$ we have $q_\alpha\rest\beta=q_\beta$. We have that $q_\alpha$ is a common upper bound
for $\langle p_i:\,i<i^*\rangle$. 

The case \underline{$\alpha$ is a limit ordinal of cofinality $>\kappa$} follows by defining $h_\alpha(\langle p_i:\,i<i^\ast\rangle)$ as follows.
First we find the first $\beta<\alpha$ such that $\bigcup_{i<i^\ast}\supt(p_i)\subseteq \beta$. Such $\beta$ exists since $\cf(\alpha)>\kappa>i^\ast$.
Then we let 
\[
q_\alpha=h_\alpha(\langle p_i:\,i<i^\ast\rangle)=h_\beta(\langle p_i\rest\beta:\,i<i^\ast\rangle)\!\frown\![\emptyset_{\name{Q}_\gamma})_{\gamma\in [\beta,\alpha)},
\]
where $h_\beta(\langle p_i\rest\beta:\,i<i^\ast\rangle)$ is well-defined by the induction hypothesis. It also follows that $q_\alpha\rest \gamma=q_\gamma$ for
every $\gamma<\alpha$, as required.

\medskip

{\noindent (2)} In this proof we shall use 
Baumgartner's idea of fusion sequences. We need to adjust it to our needs, which we do as follows. Let $\alpha\le \alpha^*$ be arbitrary.

\begin{definition}\label{def:F-n} (1) For $C\in [ \alpha]^{<\kappa}$, for $\zeta<\kappa$ and $p,q\in {\mathbb P}_\alpha$, we define what it means that 
$p\le_{C,\zeta}^\alpha q$, denoted for short $p\le_{C,\zeta} q$:

$p\le_{C,\zeta} q \mbox{ iff }$
\begin{itemize}
\item $p\le q$ and 
\item $(\forall \beta\in C)\,q\rest\beta\forces_{{\mathbb P}_\beta}``p(\beta)\le_\zeta q(\beta)"$.
\end{itemize}

We note that $\le_{C,\zeta}^\alpha$ is a partial order with the least element $\emptyset_{\alpha}$ and that for $\beta\le \alpha\le \alpha^\ast$
we have that $\le_{C,\zeta}^\beta$  is a complete suborder of  $\le_{C,\zeta}^\alpha$. This justifies the notation $p\le_{C,\zeta} q$. 

\medskip

{\noindent (2)} A {\em fusion sequence in} ${\mathbb P}_\alpha$ is a sequence of the form 
$\langle (p_\zeta, C_\zeta):\,\zeta<\delta\rangle$ such that:
\begin{enumerate}
\item $\delta\le \kappa$ is a limit ordinal,
\item $p_\zeta\in  {\mathbb P}_\alpha$ and $p_\zeta \le_{C_\zeta,\zeta} p_{\zeta+1}$ for all $\zeta$,
\item the sequence $\langle C_\zeta:\,\zeta<\delta\rangle$ is an increasing continuous sequence of elements of $[\alpha]^{<\kappa}$,
\item if $\gamma\in C_{\zeta^\ast}$, where $\zeta^\ast<\delta$ is the first such ordinal,
then 
\begin{equation}\label{eq:suc-fusion}
p_{\zeta^\ast}\rest\gamma\forces _{\mathbb P_\gamma}``\langle p_\zeta(\gamma):\,\zeta\in [\zeta^\ast, \delta)\rangle \mbox{ is a fusion sequence in }
\name{Q}_\gamma",
\end{equation}
\item if $\xi< \delta$ and $\beta\in C_\xi$, then 
\[
p_\xi\rest \beta\forces_{\mathbb P_\beta}``p_\xi(\beta)=\name{t}_\beta (\langle p_\zeta(\beta):\,\zeta<\xi \rangle)".
\]
\end{enumerate}
\end{definition}

\begin{lemma}[Fusion Lemma]\label{lem:fusion-lem} Let $\alpha\le \alpha^\ast$. Then there is a function $H_\alpha$ such that the following holds
for any limit ordinal $\delta\le\kappa$:

if $\langle (p_\zeta, C_\zeta):\,\zeta<\delta\rangle$ is a fusion sequence in
${\mathbb P}_\alpha$ such that for all limit $\xi<\delta$ we have $p_\xi=H_\alpha(\langle (p_\zeta, C_\zeta):\,\zeta<\xi\rangle)$,
then letting $q=H_\alpha(\langle (p_\zeta, C_\zeta):\,\zeta<\delta\rangle)$, we have that
\begin{itemize}
\item $q\in {\mathbb P}_\alpha$ and
\item $p_\zeta\le_{C_\zeta,\zeta} q$ holds for any $\zeta<\delta$. 
\end{itemize}
\end{lemma}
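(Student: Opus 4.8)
The plan is to define $H_\alpha$ by recursion on $\alpha \le \alpha^*$, exactly paralleling the recursion used in part (1) for $h_\alpha$, and simultaneously prove by induction that it witnesses the Fusion Lemma. Given a fusion sequence $\langle (p_\zeta, C_\zeta):\,\zeta<\delta\rangle$ in ${\mathbb P}_\alpha$ (with the self-coherence hypothesis at limits), I want to produce $q = H_\alpha(\langle (p_\zeta, C_\zeta):\,\zeta<\delta\rangle)$ coordinatewise, so that $q\rest\beta = H_\beta(\langle (p_\zeta\rest\beta, C_\zeta\cap\beta):\,\zeta<\delta\rangle)$ for each $\beta<\alpha$. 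The case $\alpha = 0$ is trivial, and the case $\alpha$ a limit of cofinality $>\kappa$ reduces to a smaller $\beta$ containing $\bigcup_\zeta \supt(p_\zeta) \cup \bigcup_\zeta C_\zeta$ (of size $\le\kappa<\cf(\alpha)$), via the canonical embedding $i_{\beta,\alpha}$, just as in part (1). So the real content is the successor step and the limit step of cofinality $\le\kappa$.

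For the successor step $\alpha = \beta+1$: first apply the induction hypothesis to get $q\rest\beta = H_\beta(\langle (p_\zeta\rest\beta, C_\zeta\cap\beta):\,\zeta<\delta\rangle)$, which satisfies $p_\zeta\rest\beta \le_{C_\zeta\cap\beta,\zeta} q\rest\beta$ for all $\zeta$. Now I must define $q(\beta)$, a ${\mathbb P}_\beta$-name. If $\beta\notin\bigcup_\zeta C_\zeta$, then $\beta$ is eventually outside every $C_\zeta$; but by clause (4) of the fusion-sequence definition, at limit stages $C_\xi = \bigcup_{\zeta<\xi} C_\zeta$, and $\supt(p_\xi) = C_\xi$, so either $\beta\in\supt(p_\zeta)$ for some $\zeta$ (hence $\beta\in C_{\zeta'}$ for large $\zeta'$ once $\delta$ is limit), or $\beta$ is never in any $\supt(p_\zeta)$ and we set $q(\beta) = \emptyset_{\name{Q}_\beta}$. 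In the interesting case, let $\zeta^* < \delta$ be least with $\beta\in C_{\zeta^*}\cap\supt(p_{\zeta^*})$; by clause (5), $q\rest\beta$ forces that $\langle p_\zeta(\beta):\,\zeta\in[\zeta^*,\delta)\rangle$ is a fusion sequence in $\name{Q}_\beta$, and by clause (6) together with the self-coherence hypothesis on the $H$'s, this sequence is built so that $p_\xi(\beta) = \name{t}_\beta(\langle p_\zeta(\beta):\,\zeta<\xi\rangle)$ at limits $\xi$. Hence I can set $q(\beta) = \name{t}_\beta(\langle p_\zeta(\beta):\,\zeta\in[\zeta^*,\delta)\rangle)$, using that $\name{t}_\beta$ witnesses strong fusion for $\name{Q}_\beta$; this is a canonical ${\mathbb P}_\beta$-name, forced to be the fusion limit, so $q\rest\beta\forces``p_\zeta(\beta)\le_\zeta q(\beta)"$ for every $\zeta\in[\zeta^*,\delta)$ — and for $\zeta<\zeta^*$ with $\beta\in C_\zeta$ this follows since $p_\zeta(\beta)\le p_{\zeta^*}(\beta)$ in the original order together with monotonicity. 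Therefore $p_\zeta \le_{C_\zeta,\zeta} q$ for all $\zeta$, and the support of $q$ is $\bigcup_\zeta\supt(p_\zeta)$, of size $\le\kappa$.

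For the limit step $\alpha>0$ of cofinality $\le\kappa$: define $q$ as the unique sequence with $q\rest\beta = H_\beta(\langle(p_\zeta\rest\beta, C_\zeta\cap\beta):\,\zeta<\delta\rangle)$ for all $\beta<\alpha$; this is coherent by the induction hypothesis (each $H_\beta$ restricts correctly to $H_{\beta'}$ for $\beta'<\beta$, which is itself part of what one must arrange in the recursive definition). The only thing to check is that $\supt(q)$ has size $\le\kappa$: fixing a cofinal $\langle\alpha_j:\,j<j^*\le\kappa\rangle$ in $\alpha$, we get $\supt(q) = \bigcup_{j<j^*}\supt(q\rest\alpha_j) = \bigcup_{j<j^*}\bigcup_{\zeta<\delta}\supt(p_\zeta\rest\alpha_j)$, a union of $\le\kappa\cdot\kappa = \kappa$ sets each of size $\le\kappa$, hence of size $\le\kappa$ (here $\kappa^{<\kappa}=\kappa$ gives $\kappa\cdot\kappa=\kappa$ in particular, but really just $\kappa$ regular suffices). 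Then $q\in{\mathbb P}_\alpha$ and $p_\zeta\le_{C_\zeta,\zeta} q$ follows coordinatewise from the induction hypothesis.

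I expect the main obstacle to be purely bookkeeping: making the recursive definition of the $H_\alpha$ genuinely coherent, so that $H_\alpha(\bar s)\rest\beta = H_\beta(\bar s\rest\beta)$ holds by construction at every stage — including checking that the self-coherence hypothesis ``$p_\xi = H_\alpha(\langle(p_\zeta,C_\zeta):\,\zeta<\xi\rangle)$ for limit $\xi<\delta$'' propagates down to ``$p_\xi\rest\beta = H_\beta(\ldots)$'' so that the successor and limit steps can legitimately invoke clauses (5) and (6) and the strong fusion of $\name{Q}_\beta$. The one genuinely new point over Baumgartner's countable-support argument is the limit step of cofinality exactly $\kappa$, where one cannot freeze a single coordinate block and must instead verify the support bound directly via a cofinal sequence of length $\le\kappa$ as above; the clause-(4) requirement that supports be continuous along the fusion sequence is exactly what makes this work. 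None of the steps requires the amalgamation hypothesis — that enters only later, in Lemma \ref{cor:preser1}.
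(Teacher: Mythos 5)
Your proposal is correct and follows essentially the same route as the paper's proof: a coordinatewise induction along the iteration, putting $\emptyset_{\name{Q}_\beta}$ at coordinates outside $\bigcup_\zeta\supt(p_\zeta)$, applying the strong-fusion witness $\name{t}_\beta$ (justified by clauses (5) and (6) of the fusion-sequence definition) at active coordinates from the least $\zeta^\ast$ on, taking the unique coherent condition at limits of cofinality $\le\kappa$ with the support bound checked via a cofinal sequence, and using boundedness of supports at limits of cofinality $>\kappa$. The only cosmetic difference is that the paper pads the sequence below $\zeta^\ast$ by repeating $p_{\zeta^\ast}(\beta)$ before applying $\name{t}_\beta$, while you apply it to the tail $[\zeta^\ast,\delta)$ directly, which fits Definition \ref{def:strongfusion} just as well.
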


\begin{proof}[Proof of Lemma \ref{lem:fusion-lem}] The proof is by induction on $\delta$, for all $\alpha$ simultaneously.
We seek to construct $H_\alpha(\langle (p_\zeta, C_\zeta):\,\zeta<\delta\rangle)$, which we denote by $q^\alpha$. 

For $\delta=0$, we let $q^0=\emptyset_{\mathbb P_0}$.

Now suppose that $\delta\in (0,\kappa]$.
We define $q^\beta\in {\mathbb P}_\beta$ for $\beta\le\alpha$ by induction on $\beta$. The requirements of the induction are
\begin{itemize}
\item for any $\zeta<\kappa$ we have $q^\beta\ge_{C_\zeta\cap \beta, \zeta} \,p_\zeta\rest\beta$,
\item for $\gamma\le\beta\le \alpha$ we have $q^\beta\rest\gamma =q^\gamma$.
\end{itemize}

\underline{$\beta=0$.} We let $q^0=\emptyset_{{\mathbb P}_0}$.

\underline{$\beta=\gamma+1$.} We suppose that $q^\gamma$ has been defined as required. 
If $\gamma\notin \bigcup_{\zeta<\delta} \supt (p_\zeta)$, we let $q^\beta=(q^\gamma, 
\emptyset_{\name{Q}_\gamma})$. The conclusion then follows by the
induction hypothesis.

Now suppose that $\gamma\in  \bigcup_{\zeta<\delta} \supt (p_\zeta)$. Since $\delta$ is a limit ordinal, we can find a minimal $\zeta<\delta$
such that $\gamma\in \supt(p_\zeta)$, call it $\zeta^\ast$. In $V^{{\mathbb P_\gamma}}$ we shall define $q(\gamma)\in Q_\gamma$ using the function $t_\gamma$ on a sequence of the form
$\bar{r}=\langle r_\zeta:\,\zeta<\kappa\rangle$, which we define as follows. For $\zeta\le\zeta^\ast$ we define $r_\zeta=p_{\zeta^\ast}(\gamma)$ and for 
$\zeta>\zeta^\ast$ we define $r_\zeta=p_\zeta(\gamma)$. This is a fusion sequence as follows by equation (\ref{eq:suc-fusion}) in the definition of a fusion sequence.

We therefore define $q^\beta=(q^\gamma, \name{t}_\gamma(\bar{r}))$.

\underline{$\beta$ is a limit ordinal of cofinality $\le\kappa$.} We intend to define $q^\beta$ as the unique condition in ${\mathbb P}_\beta$ satisfying that 
for all $\gamma<\beta$ we have $q^\beta\rest\gamma=q^\gamma$. We have to check that the support of such $q$ is still of size $\le\kappa$. Let 
$i^\ast=\cf(\beta)$, so $i^\ast\le\kappa$ is a limit ordinal. Let us fix
an increasing sequence $\langle \beta_i:\,i<i^\ast\le \kappa\rangle$ with $\beta=\sup_{i<i^\ast}\beta_i$. We have by the induction hypothesis
that $\bigcup_{\gamma<\beta}\supt(q^\gamma)=\bigcup_{i<i^\ast}\supt(q^{\beta_i})$, so is of size $\le\kappa$, and hence $q^\beta$ is indeed a condition in  ${\mathbb P}_\beta$.

We now claim that for any $\zeta<\delta$ we have $q^\beta\ge_{C_\zeta\cap \beta, \zeta} p_\zeta\rest\beta$, so let us fix such $\zeta$.
First let us see that $p_\zeta\rest\beta\le q^\beta$. By the induction hypothesis, we have that for all $\gamma<\beta$ it holds that 
$p_\zeta\rest\gamma\le q^\gamma$. Then $p_\zeta\rest\beta\le q^\beta$ follows by the definition of $q^\beta$.

If $\gamma\in C_\zeta\cap\beta$, by the fact that $i^\ast$ is a limit ordinal, there is $i<i^\ast$ such that $\gamma\in
C_\zeta\cap\beta_i$. Hence we have by the induction hypothesis that 
\[
q^{\beta_i}\rest\gamma\forces_{{\mathbb P}_\gamma}``q^{\beta_i}(\gamma)\ge_\zeta p_\zeta(\gamma)".
\]
The conclusion follows since $q^\beta\rest(\gamma+1)=q^{\beta_i}\rest(\gamma+1)$.

\underline{$\beta$ is a limit ordinal of cofinality $>\kappa$.} This case is handled trivially by the requirement on the size of the supports. By that 
requirement we have that there exist $\gamma<\beta$ such that $\bigcup_{\zeta<\delta}\supt(p_\zeta)\subseteq \gamma$. Our induction
was done so that for any $\gamma' <\beta'$ with $\gamma',\beta'\in(\gamma,\beta)$ the condition $q^{\beta'}$ satisfies $q^{\beta'}(\gamma')=\emptyset_{Q_{\gamma'}}$
and hence defining $q^\beta$ as the unique condition in ${\mathbb P}_\beta$ satisfying that 
for all $\beta'<\beta$ we have $q^\beta\rest{\beta}'=q^{\beta'}$ automatically fulfils the requirement.
$\eop_{\ref{lem:fusion-lem}}$
\end{proof}

We arrive to the main point, which is where we need to use the assumption on the iteration having amalgamation.

\begin{lemma}\label{cor:preser1} The forcing ${\mathbb P}_{\alpha^\ast}$ is $B-\kappa$-proper.
\end{lemma}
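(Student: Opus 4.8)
The plan is to obtain Lemma \ref{cor:preser1} directly from the Fusion Lemma (Lemma \ref{lem:fusion-lem}) together with the amalgamation hypothesis; no separate induction on $\alpha^\ast$ is needed, since Lemma \ref{lem:fusion-lem} is already available for $\alpha^\ast$ itself (one could alternatively induct on $\alpha^\ast$, handling successors by the standard two-step argument for ${\mathbb P}_\alpha\ast\name{Q}_\alpha$, but the direct route is cleaner). The elementary submodel $N$ enters exactly as in Shelah's proof that countable support iterations preserve properness: we enumerate the dense open subsets of ${\mathbb P}_{\alpha^\ast}$ lying in $N$ and build a single condition $q$ meeting all of them, the twist being that the progress at each successor step of the construction is powered by amalgamation rather than by a bare extension.

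First I would fix $p\in{\mathbb P}_{\alpha^\ast}$, $\zeta<\kappa$ and $N\prec H(\chi)$ as in Definition \ref{def:B-kappa-properness}, so that $p$, the iteration, the functions witnessing strong closure and strong fusion, and the function $H_{\alpha^\ast}$ from Lemma \ref{lem:fusion-lem} all lie in $N$, with $\kappa\subseteq N$, $|N|=\kappa$ and $N^{<\kappa}\subseteq N$. Since $|N|=\kappa$, I enumerate the dense open subsets of ${\mathbb P}_{\alpha^\ast}$ belonging to $N$ as $\langle\DD_\eta:\,\eta<\kappa\rangle$, and interleave a dovetailing so that every coordinate appearing in any of the supports built below is eventually enrolled into the frozen sets. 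I then construct a fusion sequence $\langle(p_\eta,C_\eta):\,\eta\le\kappa\rangle$ in ${\mathbb P}_{\alpha^\ast}$ in the sense of Definition \ref{def:F-n}(2), with the extra inductive properties $p_\eta\in N$ and $C_\eta\in[\supt(p_\eta)]^{<\kappa}$ for $\eta<\kappa$, setting $p_\eta=p$ and $C_\eta=\supt(p)$ for $\eta\le\zeta$ so that the eventual $q$ extends $p$ in $\le_\zeta$ (that $\supt(p)$ may legitimately serve as the frozen set $C_\zeta$ is one of the bookkeeping points to verify).

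At a limit stage $\xi$ I set $p_\xi=H_{\alpha^\ast}(\langle(p_\eta,C_\eta):\,\eta<\xi\rangle)$ and $C_\xi=\bigcup_{\eta<\xi}C_\eta$; as $N^{<\kappa}\subseteq N$ and $H_{\alpha^\ast}\in N$, both stay in $N$, Lemma \ref{lem:fusion-lem} gives $p_\xi\in{\mathbb P}_{\alpha^\ast}$ with $p_\eta\le_{C_\eta,\eta}p_\xi$ for all $\eta<\xi$, and the dovetailing gives $\supt(p_\xi)=\bigcup_{\eta<\xi}\supt(p_\eta)=C_\xi$, so the initial segment constructed so far is a genuine fusion sequence. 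At a successor stage $\eta+1$ I invoke amalgamation (Definition \ref{def:amalgamation}) for the ordinal $\alpha^\ast$, the set $C_\eta\in[\alpha^\ast]^{<\kappa}$, the index $\eta$, the dense open set $\DD_\eta$ and the condition $p_\eta$, obtaining $q'$ with $q'\in\DD_\eta$, $q'\ge p_\eta$, and $q'\rest\beta\forces q'(\beta)\ge_\eta p_\eta(\beta)$ for every $\beta\in C_\eta$, i.e.\ $p_\eta\le_{C_\eta,\eta}q'$; since $\DD_\eta,C_\eta,\eta,p_\eta\in N$, elementarity lets me take $q'\in N$, and I put $p_{\eta+1}=q'$ and let $C_{\eta+1}$ be $C_\eta$ together with the scheduled point of $\supt(p_{\eta+1})$, still of size $<\kappa$ and contained in $\supt(p_{\eta+1})$. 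Finally $q:=H_{\alpha^\ast}(\langle(p_\eta,C_\eta):\,\eta<\kappa\rangle)$ is, by Lemma \ref{lem:fusion-lem}, a condition of ${\mathbb P}_{\alpha^\ast}$ with $p_\eta\le_{C_\eta,\eta}q$ for all $\eta<\kappa$; taking $\eta=\zeta$ yields $q\ge_\zeta p$, and for genericity, any dense open $\DD\in N$ is some $\DD_\eta$, so $q\ge p_{\eta+1}$ with $p_{\eta+1}\in\DD_\eta\cap N$, whence $q\forces p_{\eta+1}\in\name{G}_{\alpha^\ast}\cap\DD_\eta\cap N$; thus $q$ is ${\mathbb P}_{\alpha^\ast}$-generic over $N$, as required.

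The real obstacle I anticipate is the successor step. Without amalgamation the only way to drive $p_\eta$ into $\DD_\eta$ is a bare $\le$-extension, which on a coordinate $\beta\in C_\eta$ may move $p_\eta(\beta)$ only in $\le_0=\le$ and hence wreck the fusion we have committed to keeping on $C_\eta$; amalgamation is exactly the strengthening that lets us meet the dense set while staying $\le_\eta$ on the prescribed $<\kappa$-sized frozen set, and — as Observation \ref{label:propimplamalgamation} stresses — it is not an iterable property, so recognising it as the correct hypothesis is the conceptual heart of the argument. A second, more technical delicacy, already absorbed into Lemma \ref{lem:fusion-lem}, is the behaviour of the ``frozen initial segments'' construction of $q$ at a limit ordinal $\beta\le\alpha^\ast$ of cofinality exactly $\kappa$ — the case with no counterpart in the countable support arguments of Baumgartner and Shelah — where one must check that the union of the $\le\kappa$-sized supports of the $p_\eta$, possibly cofinal in $\beta$, still assembles into a condition of ${\mathbb P}_\beta$ coherent with all the $q^{\gamma}$, $\gamma<\beta$.
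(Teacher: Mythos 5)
Your proposal follows essentially the same route as the paper's proof: enumerate the dense open subsets of ${\mathbb P}_{\alpha^\ast}$ lying in $N$, build inside $N$ a fusion sequence in the sense of Definition \ref{def:F-n}(2) whose successor steps are powered by amalgamation (pulled into $N$ by elementarity) and whose limit steps use the function $H_{\alpha^\ast}$ of Lemma \ref{lem:fusion-lem}, and then observe that the final fusion $q$ extends each $p_{\eta+1}\in\DD_\eta\cap N$ and is therefore ${\mathbb P}_{\alpha^\ast}$-generic over $N$. The identification of amalgamation as the engine of the successor step, and of the Fusion Lemma as what carries the limits, is exactly the paper's argument.

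The one place where you diverge is the bookkeeping of the frozen sets, and there your specific device does not quite work as written: you set $C_\eta=\supt(p)$ for $\eta\le\zeta$ in order to secure $q\ge_\zeta p$, but $\supt(p)$ may have size exactly $\kappa$, whereas both Definition \ref{def:F-n} (the relation $\le_{C,\zeta}$) and Definition \ref{def:amalgamation} (the amalgamation hypothesis) are only available for $C$ of size $<\kappa$; so the point you flag as ``one of the bookkeeping points to verify'' cannot in fact be verified in that form. The paper handles the bookkeeping differently: it writes $N\cap\alpha^\ast$ as an increasing continuous union $\bigcup_{\xi<\kappa}C_\xi$ of sets of size $<\kappa$ and lets $f(\eta)$ pick the first $C_\xi$ covering $\supt(p_\eta)$, and it does not attempt to freeze all of $\supt(p)$ at once — indeed it only verifies $q\ge p$ (via $p_0=p$) and genericity, leaving the $\ge_\zeta$ clause of $B$-$\kappa$-properness implicit. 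So your extra care about $\ge_\zeta$ is in the right spirit, but to make it rigorous you would either need $|\supt(p)|<\kappa$ or a gradual enrollment of $\supt(p)$ into the $C_\eta$'s (as your own dovetailing already does for the other coordinates), accepting that what one then gets is $\le_{C,\zeta}$-extension on each $<\kappa$-sized piece rather than on the whole support at a single stage.
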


\begin{proof}[{\bf Proof of the Lemma}] Let 
\[
x=\langle P_\alpha, \name{Q}_\alpha:\,\alpha<\alpha^\ast\rangle, \langle \le_\zeta, \zeta<\kappa\rangle\rangle,
\]
where $\langle \le_\zeta, \zeta<\kappa\rangle$ are some orders witnessing the strong fusion of  ${\mathbb P}_{\alpha^\ast}$. Let  
$N\prec \HH(\chi)$ with $|N|=\kappa$ and $N^{<\kappa}\subseteq N$ be such that ${\mathbb P}_{\alpha^\ast}, x, p \in N$. In particular, it follows by elementarity that the function $H_{\alpha^\ast}$ can be chosen in $N$, by applying Fusion Lemma \ref{lem:fusion-lem} within $N$.

Let $\langle \DD_\zeta:\;\zeta<\kappa\rangle$ be an enumeration of all dense open sets of ${\mathbb P}_{\alpha^\ast}$ which are in $N$. Let $N\cap \alpha^\ast=
\bigcup_{\zeta<\kappa} C_\zeta$ be a continuous non-decreasing union where for each $\zeta$ we have $|C_\zeta|<\kappa$. In particular, each $C_\zeta\in N$.

By induction on $\zeta<\kappa$ we define a fusion sequence
$\langle (p_\zeta, C_{\zeta}):\,\zeta<\kappa\rangle$ such that
\begin{itemize}
\item $p_0=p$, $p_\zeta\in N$, 
\item $p_{\zeta+1}\in \DD_\zeta$,
\item for any limit $0<\delta<\kappa$ we have $p_\delta=H_{\alpha^\ast}(\langle (p_\zeta, C_{\zeta}):\,\zeta<\delta\rangle)$.
\end{itemize}
where $H_{\alpha^\ast}$ is the function given by the Fusion Lemma \ref{lem:fusion-lem}.

{\bf Use of amalgamation}
To achieve the successor stages of the induction, we shall use the property of amalgamation of $\langle P_\alpha, \name{Q}_\alpha:\,\alpha<\alpha^\ast\rangle$ applied within $N$. That property guarantees that 
we can find  $p_{\zeta+1}\in N\cap \DD_\zeta$ with $p_\zeta \le_{C_\zeta, \zeta} p_{\zeta+1}$.

At non-zero limit stages $\delta$ we
observe that the construction is done so that the function $H_{\alpha^\ast}$ applies to the sequence $\langle (p_\zeta, C_\zeta):\,\zeta<\delta\rangle$, so we let $p_\delta=H_{\alpha^\ast}(\langle (p_\zeta, C_\zeta):\,\zeta<\delta\rangle)$, which by elementarity is an element of $N$. 

Note that since $p_\zeta\in N$ for every $\zeta$, we have that $\supt(p_\zeta)\subseteq N \cap \alpha^\ast=\bigcup_{\xi<\kappa}C_\xi$.

At the end, there is a unique condition $q=H_{\alpha^\ast} (\langle (p_\zeta, C_{\zeta}):\,\zeta<\kappa\rangle)$. This condition is no longer an element of $N$, 
but it satisfies that for every $\zeta$, we have $p_\zeta\le_{C_\zeta, \zeta}q$.

\begin{claim}\label{cl:axiomA--proper} For every $\zeta<\kappa$ we have $p_\zeta\le q$.
\end{claim}
 
 \begin{proof} By induction on $\beta\in N\cap \alpha^\ast$ we show that for every $\zeta<\kappa$ we have $p_\zeta\rest \beta\le q\rest \beta$.
 The conclusion then follows since for every $\zeta$ we have $\supt(p_\zeta)\subseteq N$.
 Also note that our assumptions imply that $N\cap \alpha^\ast$ is an ordinal.
 
 The stage $\beta=0$ is trivial. Coming at the successor stage, let $\zeta<\kappa$ be arbitrary. We need to show that
\[
q\rest\beta\forces_{{\mathbb P}_\beta}``p_{\zeta}(\beta)\le q(\beta)".
\]
Let $\zeta^\ast$ be such that $\beta\in C_{\zeta^\ast}$. Without loss of generality, we have $\zeta\le\zeta^\ast$.
Then by part (5) in the definition of the fusion sequence and the inductive assumption we have that $q\rest\beta$ forces that $q(\beta)$ is the fusion limit of
$\langle p_\xi(\beta):\,\xi \in [\zeta^\ast, \kappa)\rangle$. In particular we have the desired inequality, since 
\[
p_\zeta\rest\beta \le p_{\zeta^\ast}\rest \beta \le q\rest \beta
\]
and $p_{\zeta^\ast}\rest\beta\forces_{{\mathbb P}_\beta} ``p_{\zeta^\ast}(\beta)\ge p_{\zeta}(\beta)"$.

The case of $\beta$ limit follows from the induction hypothesis and the definition of the order at limit stages.
$\eop_{\ref{cl:axiomA--proper}}$
\end{proof}

We finally claim that $q$ is ${\mathbb P_{\alpha^\ast}}$-generic over $N$. 

Let $\DD$ be a dense open subset of ${\mathbb P_{\alpha^\ast}}$ with $\DD\in N$. We claim that $\DD\cap N$ is predense above $q$. So let $r\ge q$
and let $\DD=\DD_\zeta$ for some $\zeta$. We need to find a condition in $\DD\cap N$ comparable with $r$ and we claim that $p_{\zeta+1}$
is such a condition. This follows because $p_{\zeta+1}\le q$ and $p_{\zeta+1}\in  \DD_\zeta\cap N$. 
$\eop_{\ref{cor:preser1}}$
\end{proof}
$\eop_{\ref{th:iteration}}$
\end{proof}

\section{Iterations with Amalgamation}\label{sec:whichones}
To finish, we give an example of an iteration with amalgamation, which is the iteration of the Perfect Set Forcing with respect to a filter. 
For such a thing to make sense we need to make an additional assumption, which is that the filter $\FF$ we intend to use remains a $(<\kappa)$-complete filter on $\kappa$ after forcing with $(<\kappa)$-closed $B(\kappa)$-proper forcing. We note that this assumption is necessary, because for all we know,
$\FF$ might obtain new elements upon such forcing. Hence in this section we assume that 
\begin{itemize}
\item $\FF$ is a $(<\kappa)$-complete filter on $\kappa$ which does not get new elements upon forcing by a $(<\kappa)$-closed forcing $B(\kappa)$-proper forcing.
\end{itemize}
It follows that any sequence of $(<\kappa)$ many sets elements of $\FF$ which is in the extension is already in the ground model, and hence its intersection is in $\FF$ because $\FF$ is $(<\kappa)$-complete in the ground model. So $\FF$ has no new elements, no new $(<\kappa)$-sequences of elements and in particular is still a $(<\kappa)$-complete filter in the extension. In particular the forcing $\name{\mathbb P}(\FF)$ is forced to be well-defined in the extension and to satisfy all the properties of ${\mathbb P}(\FF)$ studied in this paper.

\begin{example}\label{ex:remainsOK} An example of a filter satisfying the assumptions above is the filter of the co-bounded subsets of $\kappa$. Namely, we 
$\FF$ does not get any new element upon forcing by a $(<\kappa)$-closed forcing, since no new bounded - and hence co-bounded- subsets of $\kappa$ are added by such a forcing. 
\end{example}

\begin{theorem}\label{th:amlgamation} The iteration with supports of size $\le\kappa$ of ${\mathbb P} (\FF)$-forcing has amalgamation.
\end{theorem}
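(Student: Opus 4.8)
The plan is to prove amalgamation by induction on $\alpha\le\alpha^\ast$, for all $C\in[\alpha]^{<\kappa}$, $\zeta<\kappa$, dense open $\DD\subseteq\mathbb P_\alpha$ and $p\in\mathbb P_\alpha$ simultaneously. The successor step $\alpha=\gamma+1$ is the heart of the matter; the limit steps are handled by an internal fusion, using the fact that $\mathbb P(\FF)$ has \emph{strong fusion} (Lemma~\ref{Sacks:Fusion}) and strong $(<\kappa)$-closure (Lemma~\ref{when-closed}) so that all intermediate objects can be chosen canonically and the supports stay of size $\le\kappa$. For the successor step, given $p$ and $C$, write $C'=C\cap\gamma$; if $\gamma\notin C$ the statement reduces, modulo the two-step iteration bookkeeping, to applying the inductive hypothesis at $\gamma$ together with a density argument in $\name{\mathbb P}(\FF)$ over $\mathbb P_\gamma$. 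If $\gamma\in C$, the key is to handle the coordinate $\gamma$ using the $(\zeta+1,\name x)$-amalgam construction isolated in Notation~\ref{not:amalgam}: in $V^{\mathbb P_\gamma}$ one enumerates the $\zeta$-front $\{s_\xi:\xi\le\xi^\ast\}$ of $p(\gamma)$ (of size $\le\kappa$), and below each $(p(\gamma))_{s_\xi\frown i}$ one must steer into $\DD$, which is a statement about the tail $\mathbb P_{[\gamma+1,\alpha)}$ of the iteration, handled by the inductive hypothesis (over the further conditions $r_{\xi,i}\rest(\gamma+1)$), and then glue the $\le\kappa$-many tail conditions together coordinatewise using $(<\kappa)$-closure/strong fusion above $\gamma$. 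Taking the amalgam $q(\gamma)=\bigcup_{\xi,i}r^{(\xi,i)}(\gamma)$ on coordinate $\gamma$ gives $q\rest\gamma\forces q(\gamma)\ge_{\zeta+1}p(\gamma)$ (so $\ge_\zeta$), $q\ge p$, and $q\in\DD$ by the standard argument that any further extension deciding membership in $\DD$ is compatible with some $r_{\xi,i}$.

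For the limit step $\alpha$ of cofinality $\le\kappa$, fix $\langle\alpha_j:j<\cf(\alpha)\rangle$ cofinal in $\alpha$ with $C\subseteq\alpha_0$, and enumerate the pair of tasks ``get into $\DD$'' while pushing the support upward, as an iteration of length $\kappa$ of uses of the inductive hypothesis on the initial segments $\mathbb P_{\alpha_j}$. Concretely, build a fusion sequence $\langle(p_\eta,C_\eta):\eta<\kappa\rangle$ in $\mathbb P_\alpha$ with $p_0=p$, $C_0=C$, where at step $\eta+1$ one applies the inductive hypothesis for $\mathbb P_{\alpha_{j(\eta)}}$ with the set $C_\eta\cap\alpha_{j(\eta)}$, the level $\eta$, and a dense set coding ``decide membership in $\DD$'' — and then extends trivially above $\alpha_{j(\eta)}$; at limits use the Fusion Lemma~\ref{lem:fusion-lem} for the iteration (whose hypotheses the individual forcings $\mathbb P(\FF)$ satisfy by Corollary~\ref{Miller-propertyB} and Example~\ref{ex-strongfusion}(1)). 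The limit $q=H_\alpha(\langle(p_\eta,C_\eta):\eta<\kappa\rangle)$ then satisfies $q\ge_{C,\zeta}p$ by the Fusion Lemma, has support of size $\le\kappa$, and lies in $\DD$ because $\DD$ is dense open and every $p_{\eta+1}$ was driven into the relevant coding dense set; a cofinal bookkeeping (interleaving the $\DD$-task with the support-growth task, as in the proof of Lemma~\ref{cor:preser1}) ensures genuine entry into $\DD$ rather than mere density above $q$. The cofinality $>\kappa$ case is immediate: some $\mathbb P_\gamma$ with $\gamma<\alpha$ already contains $\supt(p)$ and a name for $\DD\cap\mathbb P_\gamma$-relevant information, so one applies the inductive hypothesis at $\gamma$ and embeds back.

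The main obstacle I expect is the successor coordinate $\gamma\in C$: one must simultaneously (i) run the $\FF$-amalgam at coordinate $\gamma$ so that the resulting $q(\gamma)$ is $\ge_{\zeta+1}p(\gamma)$ and not merely $\ge p(\gamma)$, (ii) for each of the $\le\kappa$-many nodes $s_\xi\frown i$ of the $\zeta$-front, find a \emph{tail} extension into $\DD$ — which is itself an amalgamation call one level up, but with the \emph{trivial} coordinate constraint $C\cap[\gamma+1,\alpha)$ unless $C$ meets the tail, in which case the constraints must be threaded through all these calls consistently — and (iii) fuse the $\le\kappa$-many resulting tails into a single condition using $(<\kappa)$-closure, which is only legitimate because $\kappa^{<\kappa}=\kappa$ bounds the number of front-nodes by $\kappa$ and the intersection of $\le\kappa$-many $\FF$-perfect trees along a fixed stem is again a condition (by the $(<\kappa)$-completeness of $\FF$, as in Lemma~\ref{when-closed}). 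Keeping the support of the glued tail of size $\le\kappa$ throughout, and verifying that the canonical-name bookkeeping of Definition~\ref{def:mixed-iteration} is respected, is the delicate part; everything else is routine iteration-theoretic gluing.
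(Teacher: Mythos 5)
Your plan diverges from the paper's argument in a way that runs into the exact obstacle the construction is designed to avoid, and the two places where you wave at the difficulty are precisely where the gaps are. First, the successor/intermediate step: when $\gamma\in C$ you propose to enumerate the $\zeta$-front of $p(\gamma)$, steer each restriction $(p(\gamma))_{s_\xi\frown i}$ into $\DD$, and then ``glue the $\le\kappa$-many tail conditions together coordinatewise using $(<\kappa)$-closure/strong fusion above $\gamma$''. But entering the dense set $\DD\subseteq{\mathbb P}_\alpha$ forces extensions at the coordinates \emph{below} $\gamma$ (and at the rest of the support) as well, and the $\le\kappa$-many resulting conditions $r_{\xi,i}\rest\gamma$ are in general pairwise incompatible extensions of $p\rest\gamma$ — they decide the name of the front of $p(\gamma)$ and other names differently. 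Strong $(<\kappa)$-closure and strong fusion only combine \emph{increasing} (respectively fusion) sequences; they cannot amalgamate $\kappa$-many mutually incomparable conditions, and nothing in your plan explains how the lower parts are reconciled while keeping $\ge_{C\cap\gamma,\zeta}$. The paper avoids this entirely by not inducting on $\alpha$: it fixes one $N\prec\HH(\chi)$ with $|N|=\kappa$, $N^{<\kappa}\subseteq N$, indexes full witnesses $r_f\in\DD\cap N$, $r_f\ge p$, by functions $f$ that choose a front node and successor at \emph{every} coordinate of $\supt(p)$ simultaneously (with $r_f(\beta)\ge f(\beta)$ for $\beta\in C$), and then defines $q$ coordinatewise, on support $N\cap\alpha^\ast$, as the $(\zeta{+}1)$-amalgam (Notation \ref{not:amalgam}) of $\{r_f(\beta)\}$ at each $\beta$; the incompatibility of the witnesses below any coordinate is harmless because they are amalgamated at every coordinate at once, and the submodel guarantees their supports are caught by $\supt(q)$.

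Second, the limit step of cofinality $\le\kappa$: driving a fusion sequence through sets that ``code deciding membership in $\DD$'' at initial segments ${\mathbb P}_{\alpha_j}$ can at best give that $\DD$ is predense above the fusion limit $q$, not $q\in\DD$ (nor the constrained Mastering statement actually needed), because membership in $\DD$ depends on the whole condition including the tail beyond $\alpha_j$, which the initial-segment calls do not control. Nor can you repair this by one final extension of $q$ into $\DD$: the orders $\le_\zeta$ are not upward absorbed by $\le$ (a further extension may delete the nodes of small splitting history that witness $q(\beta)\ge_\zeta p(\beta)$), so the $\ge_\zeta$ constraints at $C$ would be lost. Your remark that ``a cofinal bookkeeping ensures genuine entry into $\DD$'' is exactly the non-iterability of amalgamation that the paper stresses (see the discussion around Observation \ref{label:propimplamalgamation}); without a concrete mechanism for it, the induction does not close, which is why the paper proves Theorem \ref{th:amlgamation} by the single global amalgam construction rather than by induction on the length of the iteration.
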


\begin{corollary}\label{th:iterationPrikry} The iteration with supports of size $\le\kappa$ of ${\mathbb P} (\FF)$-forcing is strongly 
$(<\kappa)$-closed and strongly $B-\kappa$-proper.
\end{corollary}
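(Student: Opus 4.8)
The plan is to obtain Corollary \ref{th:iterationPrikry} as a direct application of the General Iteration Theorem \ref{th:iteration}, once its four hypotheses have been verified for the iteration with supports of size $\le\kappa$ of the forcing ${\mathbb P}(\FF)$. Three of these hypotheses concern the individual iterands and one concerns the iteration as a whole. For the individual iterands, strong $(<\kappa)$-closure is exactly Lemma \ref{when-closed} (recorded as Example \ref{ex-strongclosure}(1)), with witnessing function the pointwise intersection; strong fusion is exactly Lemma \ref{Sacks:Fusion} (recorded as Example \ref{ex-strongfusion}(1)), again with the intersection as witness. For $B-\kappa$-properness I would chain Corollary \ref{Miller-propertyB}, which gives that ${\mathbb P}(\FF)$ has Property $B(\kappa)$ whenever $\kappa^{<\kappa}=\kappa$ and $\FF$ is $(<\kappa)$-complete, with Theorem \ref{th:Aimpliespr}, which states that every Property $B(\kappa)$ forcing is $B-\kappa$-proper. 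The fourth hypothesis, amalgamation of the iteration, is precisely the content of Theorem \ref{th:amlgamation}.

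The one point that requires care is that these three properties of the iterand are needed not merely in $V$ but in every intermediate model $V^{{\mathbb P}_\alpha}$, since Definition \ref{def:mixed-iteration} asks that it be forced by ${\mathbb P}_\alpha$ that $\name{Q}_\alpha={\mathbb P}(\FF)$ has them. I would argue this by the very induction that drives Theorem \ref{th:iteration}: part (1) of that theorem provides, at each stage $\alpha$, that the initial segment ${\mathbb P}_\alpha$ is strongly $(<\kappa)$-closed, hence in particular $(<\kappa)$-closed. A $(<\kappa)$-closed forcing adds no new $\lambda$-sequences of ground-model elements for $\lambda<\kappa$; applying this to sequences of members of $\FF$ shows that $\FF$ remains $(<\kappa)$-complete in $V^{{\mathbb P}_\alpha}$, and applying it to ${}^{<\kappa}\kappa$ (together with the preservation of cardinals $\le\kappa$) shows that $\kappa^{<\kappa}=\kappa$ persists. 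Thus the hypotheses of Corollary \ref{Miller-propertyB} hold in $V^{{\mathbb P}_\alpha}$, so it is forced that ${\mathbb P}(\FF)$ has Property $B(\kappa)$ and is therefore $B-\kappa$-proper; the witnesses for strong closure and strong fusion are given by the same absolute formula (pointwise intersection) and so are forced to work as well.

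With all four hypotheses in place, Theorem \ref{th:iteration} applies and yields directly that ${\mathbb P}_{\alpha^*}$ is strongly $(<\kappa)$-closed and $B-\kappa$-proper, which is the assertion of the corollary; the preservation of cardinals up to and including $\kappa^+$ then comes for free from the same theorem. I do not expect a genuine obstacle here, since the substantive work was done in establishing amalgamation (Theorem \ref{th:amlgamation}) and in the iteration theorem itself (Theorem \ref{th:iteration}), and the corollary is an assembly of those results. If there is a delicate spot, it is exactly the persistence of $\kappa^{<\kappa}=\kappa$ and of the $(<\kappa)$-completeness of $\FF$ through the intermediate extensions, which is what guarantees that each iterand really is a Property $B(\kappa)$ forcing rather than merely satisfying the first two clauses of Definition \ref{def:propertyB}.
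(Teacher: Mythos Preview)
Your proposal is correct and follows essentially the same approach as the paper: the corollary is obtained by assembling the verified hypotheses of the General Iteration Theorem \ref{th:iteration}, with Theorem \ref{th:amlgamation} supplying amalgamation and the earlier lemmas supplying the three iterand-level properties. The paper in fact bundles the proof of the corollary with that of Theorem \ref{th:amlgamation} and does not spell out the persistence of $\kappa^{<\kappa}=\kappa$ and of $(<\kappa)$-completeness of the filter through intermediate extensions; your explicit treatment of that point via the strong $(<\kappa)$-closure of ${\mathbb P}_\alpha$ is a welcome addition rather than a deviation.
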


\begin{proof} Let $\bar{\mathbb P}=\langle {\mathbb P}_\alpha, {\mathbb P} (\name{\FF}_\beta):\,\alpha\le \alpha^*, \beta<\alpha^*\rangle$ be an iteration of 
${\mathbb P} (\FF)$-forcing with supports of size $\le\kappa$. 
We shall use the notation $\name{Q}_\beta$ for ${\mathbb P} (\name{\FF}_\beta)$. Let $p\in {\mathbb P}_{\alpha^\ast}$.
By extending $p$ if necessary, we may assume that $C\subseteq \supt(p)$.

Let $N\prec \HH(\chi)$ satisfy $|N|=\kappa$ and $N^{<\kappa}\subseteq N$, while $p, C, \DD, \bar{{\mathbb P}} \in N$. In particular, $\supt(p)\subseteq N$.

By standard arguments such as seen in Observation \ref{obs:masteq}(2), it suffices to assume that $\DD$ is the dense set of conditions deciding the value of $\name{x}$ for some ordinal (hence $\name{x}$
in $N$ by elementarity) and to show that we can find a $q\ge p$, $q\ge_{C, \zeta} p$ and a set $X\in {\mathbf V}$ of size $\le\kappa$ with 
$q$ forcing $\name{x}$ to be in $X$.

We consider the set $\FF$ of the representatives of the equivalence by forcing classes of all functions $f$ defined on $\supt(p)$ 
such that for every $\beta\in \supt(p)\cap N$, it is forced by $p\rest\beta$
that there is $\name{s}$ in the $\zeta$-front of $p(\beta)$ and an extension $f(\beta)=\name{r}_s$ of $\name{s}_{p(\beta)}$ such that for some condition
$r$ we have
\begin{itemize}
\item $r\ge p$,
\item $r\in \DD\cap N$ and
\item $(\forall \beta\in C)\,r\rest\beta\forces_{\mathbb P_\beta} {``}r(\beta)\ge f(\beta)"$.
\end{itemize}
For each such $f$ we choose a condition 
$r_f\in N$ witnessing the above conditions. We let $X$ be the set of all values forced to $\name{x}$ by any such $r_f$,
which gives us a set of size $\le\kappa$, since we have chosen $r_f\in N$ and $|N|=\kappa$.

We let $q'$ be the condition defined as follows: $\supt(q')=\supt(p)$ and for every $\zeta\in \supt(p)$ the value of 
$q'(\beta)$ is forced by $q'\rest \beta$ to be 
 the $\zeta+1$- amalgam of $\{r_f(\beta):\,f\in \FF\}$. We extend $q'$ to a condition whose support is $N\cap \alpha^\ast$ so that the value
 of $q(\beta)$ for $\beta\in N\setminus \supt(\beta)$ is the $1$-amalgam of $\{r_f(\beta):\,f\in \FF\}$.
 
We claim that  $q\forces {``}\name{x}\in X"$. Indeed, suppose $r\ge q$ and $r$ forces a value to $\name{x}$. We have defined $q$ to that $r$ must be compatible with $r_f$ for some $f\in \FF$, since $r_f\in N$ implies $\supt(r_f)\subseteq \supt(q)$. Therefore the value forced to $\name{x}$ by $r$ is already in $X$.
$\eop_{\ref{th:amlgamation}, \ref{th:iterationPrikry} }$
\end{proof}

We note that in the work in progress \cite{Kanovei-style} we consider a generalisation of Theorem \ref{th:amlgamation} and Corollary \ref{th:iterationPrikry} to iterations along any well-founded set. 

\section{Conclusions and Further Directions}\label{sec:conclusions}
We have introduced uncountable variants of perfect set forcing and some other arboreal forcings and have explored their behaviour under products and iterations.
Our main theorem are Theorem \ref{th:amlgamation} and Corollary \ref{th:iterationPrikry} which give an iteration method for iterating generalised
Perfect Set Forcing and a template for iterating its abstract variants.

The method of the proof of Corollary \ref{th:iterationPrikry} is not that of a classical iteration theorem, since the property of having amalgamation is required
to hold of the whole iteration rather than of the individual iterands. Although we have come up with this kind of iteration in a different way, a posteriori it resembles the iteration method that Vladimir Kanovei introduced in \cite{zbMATH01335192} for being able to iterate perfect set forcing along any partial order. This method has in the meantime gotten known as {\em geometric iteration} and has been used by many authors. It is in fact the proof that arbitrary iterations of perfect set forcing can be considered as a sort of {\em long products}. This is no longer true for the generalised perfect set forcing. Iterations of the generalised perfect set forcing along arbitrary well-founded orders are considered in \cite{Kanovei-style} .

There is likely nothing optimal about our method, although it is known that some
extra condition has to be put in order to obtain an iteration theorem about $\aleph_1$-proper countably closed forcing, see the example due to Inamdar in
\cite{paper34}. It is not very clear where the limit between the technical difficulty of preservation in the limits of countable cofinality and the actual impossibility of having an iteration theorem lies. An upcoming survey paper by Inamdar and Rinot should deal with this problematics.

As for our iteration theorem, the next direction to explore would be if this method can be used in situations where not all individual forcings are of the same kind. In our opinion it should be doable, with some extra effort. However, one should take into account that iterating Sacks forcing can be done even along any kind of partial order, as proven by Kanovei in \cite{zbMATH01335192}, while the general Axiom A is hard to iterate. 
If a more general method for iterations should be found, only then we could talk about a possible forcing axiom, which would necessarily be expressed in a different and less general way than the classical forcing axioms which apply to all members of a given class of forcing, no matter in what order or combination we take them.
A possible philosophical advantage would be that such a limited forcing axiom would not require the use of large cardinal axioms, since the project of generalising PFA to $\aleph_2$, whatever limited form it might be taking in the present state of the research, would seem to be obtainable basically from two supercompact cardinals. This statement is modelled on what we currently know of the large cardinal strength of PFA. It is an open conjecture if the full power of supercompactness is needed for the consistency of PFA, but it is known that PFA has large cardinal
strength. To be exact, the consistency strength of PFA is at least a proper class of strong cardinals and a proper class of Woodin cardinals, by a result of
Ronald Jensen, Ernest Schimmerling, Ralf Schindler and John Steel in \cite{Jensen_Schimmerling_Schindler_Steel_2009}. 

It is to be noted that the original proof of Baumgartner that it is consistent that every two $\aleph_1$ dense sets of reals are isomorphic does not use PFA, but a composition of $\sigma$-closed and ccc forcing, both of which are Axiom A. Hence it is conceivable that an appropriate use of 
Theorem \ref{th:iteration} could generalise Bamgartner's result to $\aleph_2$-dense sets of reals. It has often been said that this question was Neeman's motivation in discovering forcing with two types of side conditions and might have already been solved by his method, although there is no published proof at this time. Perhaps our method might give an easier proof.

Going in a different direction, it was proven in Baumgartner and Laver \cite{zbMATH03664937} that by starting with a weakly compact $\kappa$ and adding 
$\kappa$ Sacks reals iteratively, the final model resembles the Mitchell \cite{Mitchellmodel} model: $\kappa$ is collapsed to $\aleph_2$ and there are no $\aleph_2$-Aronszajn trees. Jack Silver had proven earlier, in \cite{zbMATH03356764}, that a large class of forcings have the main property used by Mitchell to obtain this result. Perhaps due to the fact that Silver's paper is not readily available, the fact seems to have remained unnoticed. See Cummings 
\cite[Lem. 1]{zbMATH06836951} for a more readily available rendition of that proof. It is reasonable to think that a similar property should hold for the iteration of the forcing ${\mathbb P}(\FF)$ as done in this paper, which would then give a new model with no $\aleph_3$-Aronszajn trees. Originally, a model with neither $\aleph_2$ or $\aleph_3$-Aronszajn trees was obtained by Uri Abraham in
\cite{zbMATH03815624}, who starting from a supercompact cardinal and a weakly compact cardinal above, used Micthell's forcing on both to achieve the desired results. Many more results on the tree property have been obtained in the meantime, often using variants of Mitchell's forcing, see the surveys \cite{CummingsForemantree} by Cummings and Foreman and a more recent one by Dima Sinapova \cite{Sinapovaomeg2}. It would be interesting to see if similar results could be obtained using the generalised Perfect Set Forcing. Perhaps by replacing Mitchell forcing by the generalised Perfect Set Forcing the known difficulties of achieveing the tree property at the successor of a singular cardinal could be overcome.

\section{Historical Remarks}\label{sec:history}

\subsection{On Perfect Set Forcing and Generalisations}
The first arboreal forcing formulated in terms that we used now was discovered by Gerald Sacks in \cite{Sacks}, who called it the perfect set forcing. He was interested in the concept from the point of view of recursion theory, since this forcing adds a real of the minimal degree. Several of the main ideas of 
Sacks' forcing are present in Albert Muchnik's solution of the Post problem \cite{zbMATH03117565}. Richard M. Friedberg published a paper \cite{zbMATH03131930} on the same solution of the Post problem one year after Muchnik, so we do not believe that he should be credited with the solution, which is often attributed to both authors independently.

The fact that a particular arboreal forcing was preserved by countable support iteration was discovered by Richard Laver in his proof of the consistency of the Borel Conjecture \cite{Laver_Borel}, where he iterated what we now call Laver forcing. Baumgartner and Laver \cite{zbMATH03664937} proved the iteration theorem showing that iterations with countable support of perfect set forcing preserve $\aleph_1$.
In his lectures in Cambridge 1978 and the paper \cite{Ba} recording them, Baumgartner proved that Axiom A forcing is preserved under countable support iterations. 

An important precedent to our paper is the paper by Akihiro Kanamori  \cite{zbMATH03708389} in which he develops a generalisation of Sacks forcing to uncountable cardinals $\lambda$. He carefully discusses the problems of the iteration and how to resolve them using a 
$\diamondsuit$ sequence in the ground model. However, his closure and fusion arguments are incorrect, starting from Lemma 1.2. This might be due to the lack of correctly written fusion arguments for perfect trees forcing at the time. Namely,
Sacks' paper \cite{Sacks} developed forcing with perfect sets of reals. Both this forcing and its translation in terms of perfect trees appeared in Thomas Jech's textbook's first edition \cite{Jec78}, but it left the fusion argument in terms of trees as an exercise (Exercise 26.4). The first written representation for the fusion argument for forcing with perfect trees appears in Baumgartner's article \cite{Ba}. In his generalisation to higher cardinals, Kanamori makes an error both in the closure and the fusion argument. Versions of Kanamori's forcing appeared in various other articles under the name of Sacks$(\kappa)$, starting from \cite{zbMATH05531005}, but they also get the closure argument wrong. 

It is not mentioned in their paper, but it reasonable to suppose, that the ``proper over semi-diamonds" notion of Ros{\l }anowski and Shelah was inspired by Kanamori's proof, which hence does work if the forcing is $(<\lambda)$-closed. Kanamori gives credit to Baumgartner who gave him the advice to 
use $\diamondsuit$ and had used it himself in the paper \cite{zbMATH03529856}. The basic idea is that of the proof of Lemma \ref{lemma-mastering-R-lambda}(2).

Perfect Set Forcing over a Filter appears in a paper by Brown and Groszek \cite[\S1]{zbMATH05121369}. The notions are cited as due to Brown in her thesis from 2006 and in an upublished paper \cite{Brown}. Brown and Groszek refer to the forcing as the generalised Miller forcing and study it with respect of adding a minimal degree.
We note that their Definition 1 misses the requirement that the elements of the forcing are trees of height $\kappa$, which is necessary for the generic function to be defined. Cardinal preservation properties are not studied in \cite[\S1]{zbMATH05121369}. In particular, proofs of $(<\kappa)$-closure and the preservation of 
$\kappa^+$ are not given in \cite{zbMATH05121369}, although it is stated that they will be given in \cite{Brown}, which is still unpublished and unavailable.

As for Grigorieff forcing, we do not think that Proposition 5.18 in \cite{Jec86} which claims that $\sigma$-product of Grigorieff forcing preserves $\aleph_1$ is correct. That proposition, given without a proof, refers to the set
rather than the tree version of Grigorieff forcing. The two are closely related and in fact the same proof that \cite{Jec86} gives for Proposition 5.17 showing that $\sigma$-product of infinitely many Mathias forcing collapses $\aleph_1$ applies to the set version of Grigorieff forcing.

\subsection{On Axiom A and Proper Forcing}
Axiom A forcing and proper forcing have similar properties and intervened histories. Baumgartner introduced Axiom A in a series of invited lectures in Cambridge in the summer of 1978. They were published as \cite{Ba}. The article corresponds exactly to what was said in the lectures, according to a statement from Magidor \footnote{e-mail 27 February 2025}, who was present and a co-organiser of the conference. There is no publicly available information on the reason for the five year publication gap, which  is unusually long.

Shelah introduced proper forcing in various lectures in the period 1978-1980 and published the work in \cite{Sh100}. The 1998 Shelah's book \cite[pg. 399]{Sh_P} states that Baumgartner's work on Axiom A and Shelah's work on proper forcing were discovered independently of each other and at about the same time. The ``About this book" on Springer's web site \url{https://link.springer.com/book/10.1007/978-3-662-21543-2} of the first edition of this book \cite{zbMATH03779315}, states that the author lectured on the subject at the University of Berkeley in 1978 and at the Hebrew University of Jerusalem 1979-1980. Keith J. Devlin, who contributed an article on proper forcing  \cite{zbMATH03829897} to \cite{zbMATH03805451}, states on pg.66 that he was not present in the Cambridge meeting and started writing his article in 1980, but that the first ``world wide exposure" of proper forcing happened in that meeting, since Shelah ``spoke about such matters". Devlin states that his article is based upon the ``original, rather sketchy notes by Shelah" on 1978 Berkeley lectures, with considerable improvements by Baumgartner in 1980 and Todor{\v c}evi{\'c}'s correction of a mistake in the penultimate version. 

It seems fair to conclude that while Axiom A was presented in its finished form in the Cambridge meeting in the summer of 1978, ``the whole concept (of properness) was 
still in its infancy" at the time, to cite Devlin. In fact, the various bibliographic information suggests that the 1978 meeting where Baumgartner exposed his finished work on Axiom A was the moment when Shelah conceived his idea of proper forcing.  

Let us explain the connection between the two notions. Shelah \cite[III Lemma 2.6]{zbMATH03779315} claims that {\em Mastering $\aleph_0$} with
$\le_\zeta$ in its definition replaced by $\le$ is equivalent to properness. It is true that properness implies {\em Mastering $\aleph_0$}, with a proof similar to that of Lemma \ref{implication-properness} here, however the proof of the other direction of \cite[III Lemma 2.6]{zbMATH03779315} is wrong,  
and the statement is wrong as well: as pointed out by Goldstern\footnote{e-mail 09 septembre 2025, with a proof we agree with}, every forcing satisfies the property that is claimed to be equivalent to properness. In the second edition of \cite{zbMATH03779315}, which is \cite{Sh_P},  without giving comments on the mistake in the previous edition, Shelah gives the correct statement with the implication going in only one direction, as \cite[III, Lemma 1.16]{Sh_P}. Given what we have just said, this statement is trivial. Goldstern \cite[Fact 3.10]{Goldsterniteration} gives a translation of 
Shelah's \cite[III Lemma 2.6]{zbMATH03779315} into a related correct statement which characterises a $\mathbb P$-generic condition over $N$ as one that for every 
name of an ordinal which is in $N$ forces the evaluation to be in $N[\name{G}]$- the difference being that we are not dealing with one ordinal at a time but all
countably many ordinals at once.  

Shelah proved that properness is preserved under countable support iterations \cite[Theorem 3.2]{Sh_P}, with a proof analogous to Baumgartner's proof \cite[Theorem 7.1]{Ba} of the fact that Axiom A forcing preserves $\aleph_1$ and using the main idea of Baumgartner's proof of building fusion sequences
in order to pass the limit stages of countable cofinality. This idea comes from the Baumgartner-Laver work \cite{zbMATH03664937} on iterated perfect-set forcing, published in 1979. However, the definition of fusion needed for the iteration of perfect set forcing in  \cite{zbMATH03664937} is incorrect (Lemma 1.1. and just above), and it is only corrected in \cite{Ba} (without any comments on the different definition of the fusion or mention of \cite{zbMATH03664937} being incorrect). Compared to Baumgartner's proof in \cite{Ba}, the proof in Shelah's version is reformulated in terms of elementary submodels. The advantage of properness is that since we are
not requiring the strong form witnessed by $q\ge_\zeta p$ for a pregiven $\zeta$, but only $q\ge p$, we do not lose strength in the limit by having to increase
$\zeta$. As a result we obtain an actual iteration theorem where the property of being proper is preserved in the limit, rather than just a cardinal preservation theorem. 

Tadatoshi Miyamoto showed in his 1988 Ph.D. thesis, the relevant part of which is published in his paper \cite{zbMATH04139722}, that by modifying Axiom A to a somewhat stronger condition, we can iterate it with countable supports through an
iteration of length of $\omega_2$ and preserve that stronger property. Koszmider in \cite[\S4]{zbMATH00238211} used a different method to iterate Axiom A to any length, but unfortunately there is a mistake in the proof of Proposition 20, definition of $G_A$.  Kanovei proved in \cite{zbMATH01335192}
that Sacks forcing can be iterated along any partial order. Studying these papers and once we understand them, trying to obtain a better or a different iteration theorem for Property $B(\kappa)$ forcing is one of our future tasks. Tetsuya Ishiu claims in \cite[Th 4.3]{zbMATH02211269} to have proved that Axiom A is (forcing) equivalent to $(<\omega_1)$-proper forcing, but his proof breaks down since his notion of uniform A axiom is simply being forcing equivalent to either a ccc or a countably closed forcing, so is not equivalent to Axiom A. \footnote{Ishiu in his e-mail 29/09/2025 says that he knew that Theorem 4.3 was wrong and proposes a patch up for his notion of uniform A forcing, which we think is still wrong.} The fact that Axiom A forcing is 
$(<\omega_1)$-proper is true and is due to Miyamoto \cite{zbMATH04139722}. The incorrect theorem of Ishiu is cited and its supposed impact explained in the paper \cite[pg. 1179]{zbMATH06476477} by David Aspéro, Sy-David Freidman, Miguel Angel Mota and Marcin Sabok.

As explained in the above and shown already by Baumgartner \cite[Th 2.4]{Ba_AppPFA}, Axiom A is a particular case of properness. Hence, in the years after the invention of properness, Baumgartner's method was overtaken by the powerful proper forcing machinery. There is an axiom associated to proper forcing, the Proper Forcing Axiom (PFA), whose consistency modulo a supercompact cardinal was proved by Baumgartner. It is common to reference Baumgartner's article \cite{Ba_AppPFA} for this proof, but actually the proof is not there. There it is stated that the paper is only about applications of PFA and that the proof is available in 
Devlin \cite{zbMATH03829897} and Shelah  \cite{zbMATH03779315}. The proof is indeed available in Devlin \cite{zbMATH03829897} with no credit given. 
Matthew Foreman, Magidor and Shelah in \cite{FMS} credit the proof to Baumgartner. The consistency proof of PFA does not appear in \cite{Sh_P} but the consistency proofs of various other similar axioms which do not need to use the Laver diamond do. It is a common understanding that the proof is indeed due to Baumgartner, in particular nobody else has claimed the proof. 

It seems fair to conclude that the use of Laver's diamond in the consistency proof of PFA and other forcing axioms is Bamgartner's idea, which Shelah did not have when considering forcing axioms in his book \cite{zbMATH03779315}.

The machinery of proper forcing depends on using countable elementary submodels of a large enough model of a sufficient amount of ZFC.
A nice property of such countable models $M$ is that their intersection with $\omega_1$ is a countable ordinal $\delta$. This property is used in various 
proofs, including the preservation under countable support iteration. So when looking for a generalisation of proper forcing to cardinals 
of the form $\lambda^+ >\aleph_1$, it is natural to consider a cardinal $\lambda$ with $\lambda=\lambda^{<\lambda}$ so to have models of size $\lambda$ whose intersection with $\lambda^+$ is an ordinal. Hence, one can try iterating with supports of size $\lambda$ forcing notions that are $(<\lambda)$-closed,
to preserve $\lambda=\lambda^{<\lambda}$, and have some analogue of properness. There are ZFC results that show that we cannot hope to have a full analogue of properness at such cardinals $\lambda$. For example
Shelah's showed
(\cite[Th 2.1]{shelah_diamond_1980}) that  $\diamondsuit(\lambda^+)$ follows from GCH (see subsection \ref{subsec:diamond}), while one can force using proper forcing a model of 
CH in which $\diamondsuit$ in which fails. The first model of this was obtained by Shelah in \cite{shelah_whitehead_1977}, but any model where CH holds and there are no Suslin trees also gives what we need. Such a model constructed by Jensen is announced in \cite{Jensencover} and a proof is given in \cite{zbMATH03453594}. A different proof by Shelah is in [ChV]\cite{Sh_P}.
 
As mentioned before, Ros{\l }anowski and Shelah in \cite{zbMATH01751288} developed a theory of forcing which is $(<\lambda)$-closed and ``proper over semi-diamonds", for $\lambda=\lambda^{<\lambda}$. They proved (Th. 2.7 \cite{RoSh655}) that
such forcing can be iterated with supports of size $\le\lambda$ and wrote a series of papers continuing these ideas. This seems directly inspired by Baumgartner's use of $\diamondsuit$ for the (incorrect) proof of cardinal preservation of the forcing $R(\kappa,\lambda)$ in his article \cite{zbMATH03529856}.

\subsection{On Baumgartner-Prikry-Silver Subsets and their Product}  
This notion of forcing at $\omega$ is often called Silver's forcing.
Adrian Mathias \cite[D111]{zbMATH03648694} is the first to use this name but he mentions that this is because Silver proved that this forcing adds a real minimal over ${\mathbf L}$. Baumgartner both in \cite{zbMATH03529856} and \cite{Ba} calls the forcing Prikry-Silver, but this name is also used elsewhere for a tree version of this forcing. A generalised version of the forcing is due to Serge Grigorieff in \cite{zbMATH03513777} and this paper is usually credited for the proof of the Fusion and Mastering arguments for the Prikry-Silver forcing, however, they are not there.

The first published version of it was given by Baumgartner in \cite{zbMATH03529856}. He states in  \cite{Ba} that this forcing satisfies Axiom A and gives the correct definition of the orders $\le_n$, but he does not give the proof that this works. In fact, the proof
is actually a subcase of a proof in \cite{zbMATH03529856}, which was published before the formulation of Axiom A, but that proof contains mistakes in both the {\em Fusion} and the {\em Mastering $\kappa$} properties (which do not appear under these names in that proof). This is true even for the one step forcing, and also for the product, see \cite[Lemma 6.10]{zbMATH03529856}. In fact, it appears that at the time of writing \cite{zbMATH03529856},
Baumgartner did not appreciate the importance of fusion in the cardinal preservation arguments.
Shelah \cite[pg.326]{Sh_P} claims to prove that Silver's forcing is proper, but his definition of Silver's forcing, which he
calls $P^\dagger(D)$ for $D$ the filter of cofinite sets, is wrong, since it actually reduces to the Cohen forcing. 

A variant of Silver's forcing where instead of $\omega\setminus\dom(p)$ being required to infinite, we require it not to be in a certain ideal, is known as Grigorieff forcing, introduced by Grigorieff in \cite{zbMATH03513777}. A version of that forcing for trees was introduced by Groszek in \cite{zbMATH04053604}. Grigorieff forcing was generalised to uncountable $\kappa$ by Brooke Andersen and Groszek in \cite{zbMATH05635610}, but they were concentrating on adding a minimal degree rather than the cardinal preservation properties, which they did not discuss.

\subsection{On Generalised Martin's Axiom}
Baumgartner in \cite[\S4]{Ba} and Shelah in \cite{Sh80} both introduced (different) generalisations of Martin's Axiom MA that hold at $\aleph_2$
and other successors of regular cardinals. Baumgartner states that the first version of the generalised MA was an unpublished result of Laver and that Curtis Herink \cite{Herink} has an improved version of the Baumgartner's Axiom. Franklin Tall indicates in his 1994 paper \cite{zbMATH00600039} on applications
of generalised MA that versions of it existed already in 1977 in the form that they could be applied, but we have not found any written evidence of that.
Versions of the axioms have appeared in various works since that time, notably the stationary $\kappa^+$-cc
forcing by Shelah (see \cite[S2]{5authorsforcing} for a presentation) and all have the same flavour as the generalisations cited here: a strong version of $\kappa^+$-cc, a strong version of $(<\kappa)$-closure and an additional condition to help pass the stages of countable cofinality. It seems fair to state that these generalisations were around in one way or another by 1977 and that the idea of having them was Laver's. 

Baumgartner \cite[\S4]{Ba} says that one would hope that there should be an iteration theorem for $\aleph_2$-cc countably closed forcing over a model of CH, preserving cardinals in the iteration and with no additional requirements on the iterands\footnote{There seems to have been a habit of the time to publish statements supposed to say exactly the opposite of what one thinks is true. We believe that this was done for the reasons of expressing a revolt at the various unfounded mathematical claims floating in the community. So, it is possible that Baumgartner already knew that such a full axiom is not possible, but preferred to let others show their lack of knowledge on the subject by struggling to find the proof themselves. Unfortunately, we cannot ask Baumgartner in person as he is no longer alive. Hence we take the liberty to take him for his word and call his statement a conjecture.}. We take the liberty to call this statement Baumgartner's conjecture.
The supposed evidence to the contrary appears in the form of various claims that it is well known that one cannot iterate $(<\kappa)$-closed and $\kappa$-proper forcing with supports of size $\kappa$, whatever local improvements on these notions one might try. The support for this claim is given by citing Shelah's \cite[Appendix 3.4A]{Sh_P}, which is supposedly giving a proof to a statement from \cite[XIV, 3.4]{zbMATH03779315} given without a proof. However, Shelah's argument is certainly unclear to us and the claim accompanied by the comment below it cannot be correct since it would contradict Baumgartner's version of generalised MA given in \cite[\S4]{Ba}. The claim without the comment is indeed proven in a blog post by Assaf Rinot \cite{Assafblog} and the proof is highly non-trivial. However, this still does
not answer Baumgartner's conjecture. The conjecture was said to be answered in the negative by Kunen by an  Exercise in his book  \cite[VIII, E6]{Kunen}, but since there is no proof given and the exercise does not seem obvious to us, we cannot state that this is indeed the solution. In addition, the revised version of this book \cite{Kunen2}, which contains solutions to many exercises from the original edition, no longer has this exercise (or a solution). The 34 years-old conjecture of Baumgartner was finally refuted by Inamdar in 2017, see the argument with the citation in the paper \cite{paper34} by Chris Lambie-Hanson and Rinot, or Inamdar's Ph.D. thesis \cite[Ch 10]{Tanmaythesis}. 

\subsection{On Products} Baumgartner's notion of mixed product mentioned in \S \ref{sec:mixeds}  was generalised by Groszek and Jech in \cite{zbMATH04187795}. They considered both products and iteration. (We warn the reader that the introduction to \cite{zbMATH04187795}  contains several mathematical statements reported as being true and which were shown to be false afterwards.) In the case of iterations, the idea of a mixed support appeared in various later papers, including D{\v z}amonja-Shelah \cite[pg. 184]{Similarbutnotsame}. In fact, the main point of this, which is the division into the pure and apure extension appears already in the work of Karel Prikry in \cite{Prikry}, who introduced the idea of a pure extension in Prikry forcing.

Groszek and Jech \cite{zbMATH04187795} consider various mixed support iterations of certain Axiom A forcings, including iterations not made along an ordinal. They state \cite[pg.4]{zbMATH04187795}  that their method requires that the individual forcing to iterate has a property they call {\em finite splitting}. In particular, not all Axiom A forcings have this property. Grigorieff forcing, which is the equivalent of ${\mathbb P}(\FF)$ for $\kappa=\aleph_0$
does not have finite splitting. 

\subsection{On Diamond and GCH}\label{subsec:diamond} We have cited Devlin's survey article \cite[Lemma IV 2.7]{zbMATH03861143} for the fact that
under GCH $\diamondsuit_{\lambda^+}$ holds for any $\lambda$ with $\cf(\lambda)>\aleph_0$. Devlin
does not mention the history of this result, but Martin Zeman \cite{zbMATH05706803} explains that the result is due to a combination of the work of John Gregory, Jensen and Shelah.

\subsection{On Iterated Forcing}
Some, but not all of the difficulties of finding an iteration theory for forcing with supports of size $\aleph_1$ are related to the way that we define iterated forcing. The usual definitions, described in detail in either Jech's textbook \cite{Jech-ST}, Kunen's textbook (\cite[VIII]{Kunen}), Baumgartner's article \cite{Ba}, Martin Goldstern's article \cite{Goldsterniteration} and Shelah's book \cite[pg. 57 for FS and pg. 107 for CS]{Sh_P} are not all the same. 
The inventors of iterated forcing are Robert Solovay and Stanley Tennenbaum
in \cite{SolTan} and their definition uses iterated Boolean algebras and embeddings. 
In the various references mentioned above, an iteration with countable supports
$\langle {\mathbb P}_\alpha, \name{Q}_\alpha:\,\alpha\le\alpha\ast, \beta<\alpha^\ast\rangle$ 
where each ${\mathbb P}_\alpha$ is a forcing notion and $\forces_{{\mathbb P}_\alpha}``\name{Q}_\alpha \mbox{ is a forcing notion}"$ is defined using partial orders
(although earlier editions of \cite{Jech-ST} used a Boolean algebra approach to forcing). Neither the notation nor the definitions are the same. One can for example see the approach of Kunen and Baumgartner on the one hand, and Azriel Lévy and Shelah on the other. Shelah's book 
\cite[III Def. 3.1]{Sh_P}, which is a changed and improved version of \cite{zbMATH03779315}) uses the following as a definition : $p\in {\mathbb P}_\alpha$ is a function with domain a countable subset of $\alpha$ such that for all $\beta\in \dom(f)$ we have
that $p(\beta)$ is a canonical ${\mathbb P}_\beta$-name and $\forces_{{\mathbb P}_\beta}``p(\beta)\in {\name Q}_\beta"$. Then 
the order is defined as $p\le_{{\mathbb P}_\alpha} q$ iff for all $\beta\in \dom(p)$ we have $q\rest\beta \forces_{{\mathbb P}_\alpha}``
q(\beta)\ge_{{\name Q}_{\beta}} p(\beta)"$. Finally, it says, for all $\beta\notin\dom(p)$ we let $p(\beta)=\name{\emptyset}_{\name{Q}_j}$.

This definition lacks in precision. It recognises that ``instead of $p(\beta)$ is a canonical ${\mathbb P}$-name we can use other
variants" (the end of the definition), but it leaves out the details of checking that the various properties of the iteration hold, Fact 3.1.A.

It seems fair to say that these different definitions of iterated forcing came from the desire of various competing schools to have their own version of the notion. This has certainly known to have been the case with the notion of forcing, which has had a very turbulent history involving both partial orders and Boolean algebras. See \cite{Kanamori} for a carefully researched and carefully phrased account, with a discrete personal take. Our own personal take is that forcing with Boolean algebras was developed in 1967 in joint work by Dana Scott and Solovay\footnote{John Truss (10 September, 2025) believes that the development of this approach to forcing likely happened during the summer school at UCLA in 1967. This school is mentioned also in John Bell's book and William Mitchell cites the notes from this school as a reference in his paper \cite{Mitchellmodel}, where he uses Boolean valued forcing. Jack Silver's paper \cite{zbMATH03402625} says that the Boolean valued models approach to forcing will appear in a joint paper by Scott and Solovay in \cite{zbMATH03458513}, but there is no such paper there.}, but that Solovay then went on to work on this in an industrious, yet often unpublished, way on his own. Scott preferred to finish writing a careful presentation through a manuscript which they started and have never finished. Scott lectured on the subject in Oxford in 1967 \footnote{statement by the then student George Wilmers, given 10 September 2025.}. The work finally only appeared in John Bell's book \cite{zbMATH05910780}, whose introduction gives historical remarks. A recent book on forcing with Boolean algebras, which avoids the historical ambiguities by a radical solution of not giving any credits is Matteo Viale's  \cite{zbMATH07944208}.

At any rate, it is known that forcing with partial orders is basically the same as forcing with Boolean algebras, but is not quite clear that this should be the case with iterated forcing, especially if large supports are considered.
However, all authors end up treating each other's work as if they were using the same definition, which ordinarily is assumed known and not repeated every 
time an iterated forcing construction appears in many papers that use it as a tool. As a consequence, some paradoxes appear. They are much less 
serious in the case of finite support iterations, where various notions are dense in each other as partial orders, but this comfortable fact disappears when we pass to iterating forcing with larger supports. Kunen \cite[VIII, Ex E]{Kunen} very carefully discusses these paradoxes and proposes a solution to some of them using full names. Goldstern \cite{Goldsterniteration} gives a very detailed and precise account of countable support iterations, not dwelling on the differences in the previous literature but just giving a correct presentation and many applications. The difference between the various possible definitions is completely unappreciated by some of the authors, such as in the citation by Shelah, above. 

The first person to state these differences was Chaz Schlindwein in his Ph.D. thesis in 1993 at Pennsylvania State University, that was then published as the paper \cite{Chaz}, where he proposed another definition of iterated forcing. Unfortunately, after ten years of using this method which gave very nice results, Schlindwein understood that he made a fundamental mistake, see \cite{zbMATH02056964}, which affects all the results he obtained. The proposed patch up to the mistake, in the same paper, is not worked out. Parts of his approach are revived by Miyamoto in \cite{Miyamoto}. This is quite a technical paper which goes beyond the scope of our work here, so we have not given it a detailed review. 

The definition of iterated forcing that we use follows Baumgartner \cite[\S4]{Ba}.

\subsection{On Forcing with Large Supports}
As explained in the introduction, forcing with uncountable supports presents additional complications in the iterated forcing arguments. 

Some authors, including Groszek and Jech  \cite{zbMATH04187795} and Shelah in his 80-th birthday celebration lecture in Vienna 2025, have considered  ill-founded iterations in order to resolve problems brought by an ordinary iteration, however with no concrete bearing on the problems considered here. Hence, in spite of their appeal, we have not found a use of these methods. We have preferred to work with an ordinary iteration and the amalgamation property.

\subsection{Further Possible Connections}
There is a paper that in a sense deals with a more complex situation than the one we shall deal here in limits of countable cofinality, brought to our attention by Martin Goldstern. Namely, Jakob Kellner and Goldstern in \cite[Th 2.4]{zbMATH05023769}  prove that the property of being proper ``densely-preserving", as defined in their paper,  is preserved by countable support iterations. This solved some long-standing open problems of preservation connected to the countable support iterations of proper 
$\omega^\omega$-bounding forcing. The properties they consider seem connected to that of Axiom A, as they address the situation of an increasing 
$\omega$-sequence of relations unioning up to the relation under consideration. However, they deal with countable support iteration of proper forcing and their relations are on ${}^{\omega}\omega$ rather than on the forcing itself. We have not been able to find the connection between that work and what we do here.

\section*{Acknowledgements}  The author submitted an unfunded proposal to ERC in April 2022, where this work formed one of the topics. She would like to thank Boban Veli{\v c}kovi{\'c} who read the proposal at submission and made a true remark about the fact that one cannot expect to iterate countably closed,
$\aleph_2-$version of Axiom A forcing with supports of size $\aleph_1$ without collapsing $\aleph_2$. He suggested that the Ros{\l }anowski-Shelah method of semi-diamonds and side conditions might be useful to solve this problem\footnote{an e-mail message of 25/04/2022.}, but the paper does not use these ideas. The author thanks the referees of the ERC proposal for useful remarks. She sincerely thanks Andrés Villaveces for engaging with her ideas, in particular in August 2022 during her visit to Colombia, and for his patience and friendship. 

While this paper was written, the author engaged in discussion with several mathematical friends, namely Omer Ben-Naria, Martin Goldstern, Tanmay Inamdar, Jakob Kellner, Assaf Rinot and Judith Roitman. Their comments have helped a lot and this project is in various ways joint between all of us.

The paper used the AI product GROK 3 by X-AI, which served as a research assistant for rough historical facts, finding references and \LaTeX questions. 

\bibliography{../../bibliomaster}
\bibliographystyle{plainurl}

\end{document}